\documentclass[11pt]{amsart}

\usepackage{amssymb}
\usepackage{graphics}
\usepackage{graphicx}
\usepackage{placeins}
\usepackage{hyperref} 

\newtheorem{theorem}{Theorem}

\newtheorem{lemma}{Lemma}
\newtheorem{prop}{Proposition}

\theoremstyle{definition}
\newtheorem{defn}{Definition}

\newtheorem{cor}{Corollary}

\theoremstyle{definition}

\hypersetup{
    colorlinks=true,
    citecolor=cyan, % Choose your preferred color
    linkcolor=blue
}

\begin{document}

\author{Sang Woo Yoon}
\title{Khovanov homology and roll-spun slice disks}

\begin{abstract}
    We show that Khovanov homology cannot distinguish the roll-spun slice disk from the trivial slice disk bounding the connected sum of a knot and its mirror, when composed with a Morse $1$-handle.
\end{abstract}

\maketitle

\section{Introduction}

Khovanov homology \cite{Kh00} is a link invariant for links in $S^3$ that categorifies the Jones polynomial. It is a functorial invariant \cite{Ja04, MWW22} --- oriented link cobordisms induce maps between Khovanov homologies (see Section 2 for a precise statement).

Functoriality of Khovanov homology can be exploited to distinguish two smoothly embedded slice disks in $D^4$ that are topologically isotopic \cite{HS24}. On the other hand, \cite{JZ20} showed knot Floer homology $\widehat {HFK}$ can distinguish a certain class of slice disks, the \textit{roll-spun slice disks}, from the trivial slice disk bounding the connected sum $K\#\bar K$ of a knot $K$ and its mirror. More precisely, they showed that cobordism maps on $\widehat{HFK}$ can distinguish the roll-spun slice disk from the trivial slice disk for a certain class of knots. We may ask whether Khovanov homology can also distinguish these slice disks, at least for some knots. In this paper, we give a partial negative result in this direction.

\begin{theorem}
    Let $K$ be a knot and $\bar K$ its mirror. Let $D_0$, $D_\text{roll}$ be the trivial and the roll-spun slice disk bounding $K\# \bar K$ respectively. Let $H$ be the cobordism $K\sqcup \bar K\to K\#\bar K$ given by the $1$-handle attachment. We have 
    \[
        Kh(D_0\circ H)=\pm Kh(D_\text{roll}\circ H).
    \]
    That is, Khovanov homology fails to distinguish $D_0\circ H$ and $D_\text{roll}\circ H$ viewed as cobordisms $K\sqcup \bar K\to \varnothing$ up to signs.
\end{theorem}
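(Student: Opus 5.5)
Recall the construction. Spin $(S^3,K)$ into $(S^4, \text{spun } K)$ and cut; equivalently, the trivial disk $D_0$ is the product disk $(B^3, K^\circ)\times I$ for the knotted arc $K^\circ\subset B^3$, viewed in $B^3\times I\cong D^4$. The roll-spun disk $D_{\text{roll}}$ is the trace of a loop of embedded arcs $K^\circ_t$, $t\in[0,1]$, that ``rolls'' the arc once along itself, returning to $K^\circ$. The composite $D\circ H$, with $H$ the $1$-handle cobordism $K\sqcup\bar K\to K\#\bar K$, is then an annulus-type cobordism $K\sqcup \bar K\to\varnothing$. My plan is to isotope both composites, rel boundary, into movie presentations that differ only by a movie of the form ``apply a loop of isotopies to one component.'' Then I would use functoriality to reduce to a statement about the induced automorphism of $Kh(K)$.

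\textbf{Step 1 (cap first).} First I would show that $D_0\circ H$ is isotopic rel boundary to the standard annulus $A$ obtained by tubing $K\times I$ to $\bar K\times I$ and capping, i.e. the ``reflection'' cobordism. Next I would show that $D_{\text{roll}}\circ H$ is isotopic to $A\circ (\rho\sqcup \mathrm{id}_{\bar K})$, where $\rho: K\to K$ is a movie of a loop of isotopies of $K$ in $S^3$. After composing with the $1$-handle, the arc $K^\circ_t$ closes up to the knot $K$ in one copy of $S^3$. The rolling motion is then a loop in the space of embeddings of $K$: concretely, it drags a basepoint once around $K$ while rotating the meridional framing. The roll is the motion in which a small ball containing the knot's crossings slides once around the knot. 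Once the basepoint (where the band is attached) is free, this motion lives entirely on the $K$ side as a self-concordance $\rho=K\times I$ with a nontrivial parametrization.

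\textbf{Step 2 (functoriality).} By \cite{MWW22}, Khovanov homology is functorial for link cobordisms in $S^3\times I$ up to isotopy rel boundary, and in the sign-ambiguous setting \cite{Ja04}. Hence
\[
Kh(D_{\text{roll}}\circ H)=Kh(A)\circ (Kh(\rho)\otimes \mathrm{id}).
\]
So it suffices to show $Kh(\rho)=\pm\mathrm{id}$ on $Kh(K)$. A movie of the loop $\rho$ consists of Reidemeister moves that, for a diagram, move a strand over/under the whole diagram, i.e. a sweep-around. I would invoke the known result that such ``sweep-around'' and basepoint-moving loops act by $\pm\mathrm{id}$ (Morrison--Walker--Wedrich's sweep-around property, which holds for Khovanov homology in $S^3$; equivalently, every isotopy loop of a knot in $\mathbb R^3$ acts trivially up to sign). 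If that is not available verbatim, I would instead decompose the roll as a composition of (i) rotating $K$ about its long axis, realized by a planar rotation of a $(1,1)$-tangle diagram plus a sweep of the closing strand behind the diagram, and (ii) Reidemeister I twist/untwist. I would then check directly that each acts as $\pm\mathrm{id}$. Rotation of a diagram in the plane induces the identity on the chain complex via the obvious relabeling. The sweep is exactly the MWW sweep-around move.

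\textbf{Main obstacle.} The hard part is Step 1: identifying the roll-spin loop of arcs, after the band is attached, with an explicit isotopy loop of the closed knot that is visibly a composition of planar rotations and sweep-arounds, while keeping track of the $1$-handle's attaching location. I would first draw the roll as the arc sliding through itself, then attach the band near one endpoint and slide the band foot along with the motion. Here there is a subtlety: the band must be slid back, which contributes a basepoint-moving loop. Its effect on Khovanov homology is again $\pm\mathrm{id}$, since unreduced $Kh$ has no basepoint dependence up to sign.
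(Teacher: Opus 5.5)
Your strategy (write $D_{\mathrm{roll}}\circ H$ as $D_0\circ H$ precomposed with the trace of a loop $\rho$ on the $K$ component, then show that trace induces $\pm\mathrm{id}$) is viable. It is also quite different from the paper. The paper never leaves the chain level: it rewrites $R\circ H$ as $S\circ H'\circ E$, built from Reidemeister II movies and one Morse move, and shows by case analysis that only one generator survives, so $CKh(S\circ H'\circ E)=\pm CKh(H)$. As written, however, your decisive step has a gap.

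Step 2 leans on the claim that every isotopy loop of a knot acts on $Kh$ by $\pm\mathrm{id}$. That is not a known theorem, and it is not what the sweep-around property says; that property is a single movie relation, and it is what makes $Kh$ functorial for cobordisms in $S^3\times I$. If your claim were available, applying it to the loop $R$ of $K\#\bar K$ would give $Kh(D_{\mathrm{roll}})=\pm Kh(D_0)$ with no $1$-handle at all. The paper can only reach that under the surjectivity hypothesis of its corollary. Your fallback is also unsupported: nothing shows that a movie of $\rho$ is a composite of sweep-arounds, and a rotation about an axis lying in the projection plane is not a planar isotopy.

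Steps 1 and 2 are also inconsistent. If, with $\bar K$ sitting idle, $\rho$ really were $K\times I$ with a new parametrization, its trace would literally be the product surface and Step 2 would be empty. In fact, in the complement of a ball $B\supset\bar K$ the loop is not a reparametrization. Its monodromy, the collar twist $r$, acts on $\pi_1$ of the knot complement, based on $\partial B$, by conjugation by the longitude.

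What closes the gap is keeping track of where $\bar K$ is.

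\emph{Step 1 needs no band sliding.} Directly from the definition, $D_{a,\varphi}\circ H$ is the trace in $D^3\times I$ of the loop of closed knots $\varphi_t(a\cup\beta)$, where $\beta$ is a short arc near $\partial D^3$. So $D_{\mathrm{roll}}\circ H\cong(D_0\circ H)\circ(\mathrm{tr}\,\rho\sqcup\bar K\times I)$, with $\rho$ supported in the ball $S^3\setminus B$.

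\emph{Split off $\bar K$.} Draw $\rho$ in a half-plane disjoint from the diagram of $\bar K$. The induced chain map is then $\pm CKh(\mathrm{tr}\,\rho)\otimes\mathrm{id}$.

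\emph{Only now forget $\bar K$.} Slide $\nu K$ once along its core, tapered across the collar. This is an isotopy from the identity to $r^{-1}$ through diffeomorphisms preserving $K$ setwise. It drags $B$ around $K$, which is exactly why it is unavailable before the $1$-handle splits $\bar K$ off. Hence, up to a $2\pi$ rotation, $\rho$ is homotopic in the space of knots in $S^3$ to a loop of reparametrizations of the fixed set $K$. Its trace is therefore isotopic rel boundary in $S^3\times I$ to $K\times I$, and functoriality in $S^3$ (Morrison--Walker--Wedrich) gives $Kh(\mathrm{tr}\,\rho)=\pm\mathrm{id}$.

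This is where your remark that unreduced $Kh$ has no basepoint dependence genuinely enters. For $\widehat{HFK}$ the same reparametrization drags the basepoints around $K$ and yields the Sarkar map, consistent with Juh\'asz--Zemke. With this argument your route gives a short conceptual proof in place of the paper's case analysis.
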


In particular, if $K$ is a knot such that $Kh(H) : Kh(K\sqcup \bar K)\to Kh(K\#\bar K)$ is surjective, then Khovanov homology does not distinguish the roll-spun slice disk from the trivial slice disk when viewed as cobordisms $K\#\bar K\to \varnothing$.

The paper is organized as follows. In Section 2, we recall the definition of Khovanov homology and give a precise statement concerning functoriality. In Section 3, we reproduce the definition of roll-spun slice disks in \cite{JZ20}, and construct explicit movie decompositions of the slice disks. In Section 4, we state the main theorem, and give an overview of its proof. The details of the proof can be found in the appendix.

\section{Khovanov homology}

Let $D$ be a diagram of an oriented link $L$, together with an ordering on its crossings. The\textit{Khovanov chain complex} $CKh(D)$ associated to this data is a chain complex bigraded by the \textit{homological grading} $\text{gr}_h$ and \textit{quantum grading} $\text{gr}_q$. 

The complex $CKh(D)$ is defined as follows. We reproduce the concise description given in \cite{HS24}. A crossing $c$ of $D$ can be resolved either by a \textit{0-smoothing} or a \textit{1-smoothing}, as described in Figure \ref{01smoothing}.

    \begin{figure}[!htbp]
    \centering
    \includegraphics[width=0.3\textwidth]{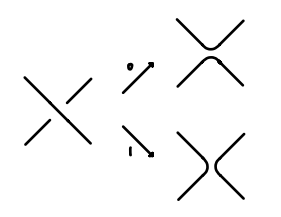}
    \label{01smoothing}
    \end{figure}
    \FloatBarrier

A \textit{smoothing} of $D$ is a closed 1-manifold in the plane obtained by resolving each crossing of $D$ either by a 0 or 1-smoothing. To each smoothing of $D$, we can associate a tuple $\sigma =(\sigma_1,\dots,\sigma_n)$ with entries either 0 or 1, where $\sigma_i$ indicates how the $i$-th crossing is resolved. A \textit{labeled smoothing} of $D$ is a smoothing of $D$ where every component is labeled with a $1$ or $x$. 

The chain group $CKh(D)$ is defined to be the free abelian group generated by all labeled smoothings of $D$. Let $a_\sigma$ be a labeled smoothing, with $\sigma$ the binary tuple associated to the smoothing underlying $a$. The homological grading of $a_\sigma$ is given by
\[
\text{gr}_h(a_\sigma) = |\sigma| - n_-,
\]
where $|\sigma| = \sum_i \sigma_i$, and $n_-$ is the number of negative crossings in $D$. The quantum grading of $a_\sigma$ is given by
\[
\text{gr}_q(a_\sigma) = |a_\sigma| + \text{gr}_h(a_\sigma) + n_+ - n_-
\]
where $n_+$ is the number of positive crossings in $D$ and $|a_\sigma|$ is the difference
\[
(\text{number of components in } a_\sigma \text{ labeled 1}) - (\text{number of components in } a_\sigma \text{ labeled } x).
\]
The differential $d : CKh(D)\to CKh(D)$ is a map of bidegree $(\text{gr}_h, \text{gr}_q) = (1,0)$ defined by the formula
\[
d(a_\sigma) = \sum_{\{i\mid\sigma_i = 0\}} (-1)^{\xi_i} a_{\sigma^i},
\]
where $\sigma^i$ is a binary $n$-tuple such that $\sigma^i_j = \sigma_j$ for all $j\neq i$ and $\sigma^i_i =1$, and $\xi_i = \sum_{j<i}\sigma_j$. The labeled smoothing $a_{\sigma^i}$ is defined as follows. The labels of $a_{\sigma^i}$ away from the $i$-th crossing agrees with that of $a_\sigma$. By changing the resolution of the $i$-th crossing from a $0$-smoothing to an $1$-smoothing we either merge two components, or split a component into two. The labels of $a_{\sigma^i}$ at these components are assigned by the following rules.

    \begin{figure}[!htbp]
    \centering
    \includegraphics[width=0.5\textwidth]{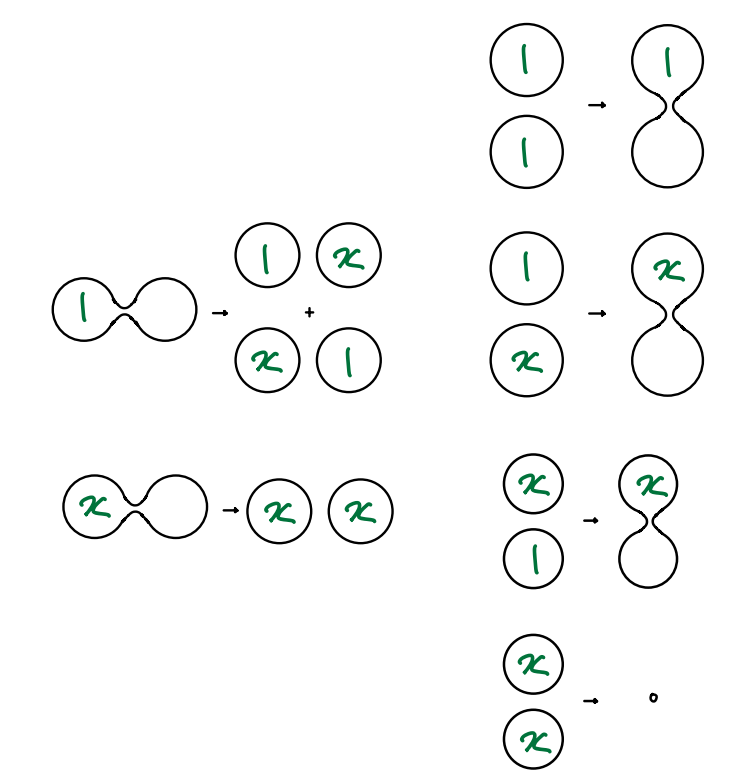}
    \label{mult}
    \end{figure}
    \FloatBarrier

The resulting homology $Kh(D)$ is called the \textit{Khovanov homology} of $D$. This group is bigraded by $\text{gr}_{h}$ and $\text{gr}_q$. Furthermore, Reidemeister moves between planar link diagrams induce quasi-isomorphisms between the associated Khovanov chain complexes. Thus $Kh(D)$ is a link invariant.

\subsection{Cobordism-induced chain map}
Let $L_0$ and $L_1$ be oriented links in $\mathbb R^3\times \{0\}$ and $\mathbb R^3\times \{1\}$ respectively. A \textit{link cobordism} $\Sigma : L_0 \to L_1$ is a compact, oriented, smoothly embedded surface $\Sigma\hookrightarrow \mathbb R^3\times [0,1]$ such that $\partial\Sigma = L_0\cup L_1$. 

After an isotopy (fixing the boundary set-wise), a link cobordism $\Sigma : L_0\to L_1$ can be broken down into a \textit{movie}, which is a sequence $D_0 = D_{t_0}, D_{t_1},\dots, D_{t_m} = D_1$ of oriented link diagrams such that consecutive diagrams $D_{t_i}, D_{t_{i+1}}$ are related either by a planar isotopy, a Reidemeister move, or a Morse move. Two movies represent isotopic cobordisms if and only if they are related by \textit{Carter-Saito movie moves}. To each Reidemeister and Morse moves, we can associate a chain map on the Khovanov chain complexes. Hence a movie $D_0 \to D_1$ induces a chain map $CKh(D_0)\to CKh(D_1)$. \cite{Ja04} shows that these chain maps are invariant under the Carter-Saito movie moves up to an overall sign, thereby proving the following theorem.

\begin{theorem}[\cite{Ja04}]
    Let $\Sigma,\Sigma' : L_0\to L_1$ be two link cobordisms related by a smooth isotopy fixing $L_0\sqcup L_1$ set-wise. Then the induced maps $Kh(\Sigma),Kh(\Sigma'):Kh(L_0)\to Kh(L_1)$ agree up to an overall sign.
\end{theorem}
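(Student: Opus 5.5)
Since the statement is cited from \cite{Ja04}, the plan is to reprove it in the standard way: reduce to a finite combinatorial check on movies. First, by the discussion preceding the theorem, any two isotopic link cobordisms $\Sigma,\Sigma'\colon L_0\to L_1$ (after fixing diagrams of $L_0,L_1$) are represented by movies that are related by a finite sequence of Carter--Saito movie moves, together with planar isotopies and changes of the time parameter. Also, the chain map assigned to a movie is by definition the composite of the maps assigned to its elementary steps. So it suffices to prove the following: for each movie move, the two short movies on either side induce chain maps $CKh(D)\to CKh(D')$ that are chain homotopic up to a sign $\pm1$. Composition is compatible with homotopy and with signs, so this local statement globalizes to $Kh(\Sigma)=\pm Kh(\Sigma')$.

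Next I would fix the elementary chain maps explicitly. For Morse moves, take the unit, counit, multiplication and comultiplication on the $1/x$ labels, with the merge/split rules of Figure~\ref{mult}. For each Reidemeister move, take the explicit quasi-isomorphism from Bar-Natan's or Khovanov's proof of invariance, expressed via Gaussian elimination, together with its explicit homotopy inverse. I would also check that planar isotopies and reorderings of crossings act by sign-twisted identifications. This is where the ambiguous overall sign comes from, since changing the crossing order changes the signs $(-1)^{\xi_i}$.

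The movie moves then split into groups, which I would treat in increasing order of difficulty.
\begin{itemize}
\item Moves relating distant elementary steps, i.e.\ commuting far-apart moves. These hold on the nose because the maps act on disjoint tensor factors, up to Koszul-type signs.
\item Moves 1--5, where a Reidemeister move is followed by its inverse. These follow because the chosen maps are homotopy inverses.
\item Moves involving Morse moves, such as a birth near a crossing or a saddle passing an R1/R2 move. These reduce to checking a few generators by hand, using locality.
\item The remaining circular moves 6--10, the ones whose two sides are nontrivial loops of Reidemeister moves.
\end{itemize}

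The main obstacle is the last group. For these I would use Bar-Natan's argument rather than brute force. Both sides give automorphisms of the complex of a tangle diagram in the cobordism category, and that complex is homotopy-equivalent to something whose degree-zero self-maps form a rank-one space generated by the identity; this is a simplicity or ``Schur lemma'' property. Hence each side is $\lambda\cdot\mathrm{id}$ for some scalar $\lambda$, and invertibility over $\mathbb Z$ forces $\lambda=\pm1$. Verifying that the relevant endomorphism space really is spanned by the identity, which requires a careful grading argument for the tangles with four or six endpoints in moves 6--10, is the step I expect to require the most care.
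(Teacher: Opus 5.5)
Your outline is sound, but it is Bar-Natan's proof rather than the one the paper relies on. The paper gives no argument of its own. It cites Jacobsson, who makes the same reduction to Carter--Saito movie moves and then checks all fifteen moves by direct computation. He works with explicit Morse and Reidemeister chain maps on the Khovanov complexes of link diagrams (of the type $\mu_1$, $\rho$, $\rho'$ described in Section 2) and follows generators through both sides of each move.

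That route is elementary, never leaves the closed-diagram complexes, and can detect which moves actually produce a $-1$. Its cost is length, above all for the circular moves, which compose long strings of R3 maps. Your route needs more machinery: tangle complexes in the cobordism category and planar-algebra locality, so that a local identity up to sign and homotopy globalizes. In exchange it replaces those computations by a structural argument, leaving hand checks only for the moves with Morse critical points. It determines each local map only up to a unit, which is exactly why only $\pm1$ comes out.

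On the step you flag, your phrasing is slightly off. The complex of a tangle with crossings is not homotopy equivalent to anything crossingless; what gets transported is its group of degree-$0$ endomorphisms up to homotopy. The clean mechanism has three parts.

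(i) A crossingless tangle $T$ without closed loops is simple. Since $\deg C=\chi(C)-\tfrac12|\partial T|$, every component of a cobordism $C\colon T\to T$ that meets the boundary runs along at least two vertical boundary segments. Each such component therefore contributes at most $0$ to the degree, with equality only for an identity strip. Closed components contribute only scalars, and a one-term complex admits no nonzero homotopies.

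(ii) Adding a crossing at a boundary point is invertible up to homotopy, since the opposite crossing followed by an R2 move undoes it. It therefore induces a bijection on degree-$0$ homotopy classes of endomorphisms and preserves simplicity.

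(iii) Every tangle occurring in moves 6--10 is built from a crossingless one in this way; for a braid, peel off one generator at a time. Note that the tetrahedral move involves a four-strand, eight-ended tangle, not only four or six endpoints.

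This avoids a direct grading computation, which would be painful: resolutions of these tangles contain closed loops, and delooping them creates shifted summands with extra degree-$0$ maps.

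Finally, the overall sign does not come from crossing reorderings. Those identifications can be fixed once and for all up to a single harmless global sign. The sign is intrinsic to the movie moves, some of which genuinely induce $-1$, and that is what your $\lambda=\pm1$ step accommodates.
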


Since we will be working with slice disks in $B^4$ with boundary in $S^3 = \partial B^4$, we need a version of the above result for links in $S^3$. This was proven in \cite{MWW22}, by showing that Khovanov homology satisfies the \textit{sweep-around move}.

In this paper we are concerned with movies involving only Reidemeister II moves and Morse $1$-handle moves. Let $D_0$ and $D_1$ be diagrams related by a single Morse $1$-handle move. Then the associated chain map $\mu_1:CKh(D_0)\to CKh(D_1)$ is defined on labeled smoothings as follows. The $1$-handle either merges two components, or splits one component into two. On components away from the $1$-handle, $\mu_1$ keeps the labels unchanged. On the components being merged or split, $\mu_1$ is defined by the (co)multiplication maps given above.

There are two chain maps associated to a Reidemeister II move --- the chain map $\rho$ induced by the Reidemeister II move resolving two crossings, and $\rho'$ induced by the reverse move creating two crossings. The effect of these chain maps are described below. We follow the descriptions given in \cite{Ja04}, \cite{Kh00}, \cite{HS24}.

    \begin{figure}[!htbp]
    \centering
    \includegraphics[width=0.5\textwidth]{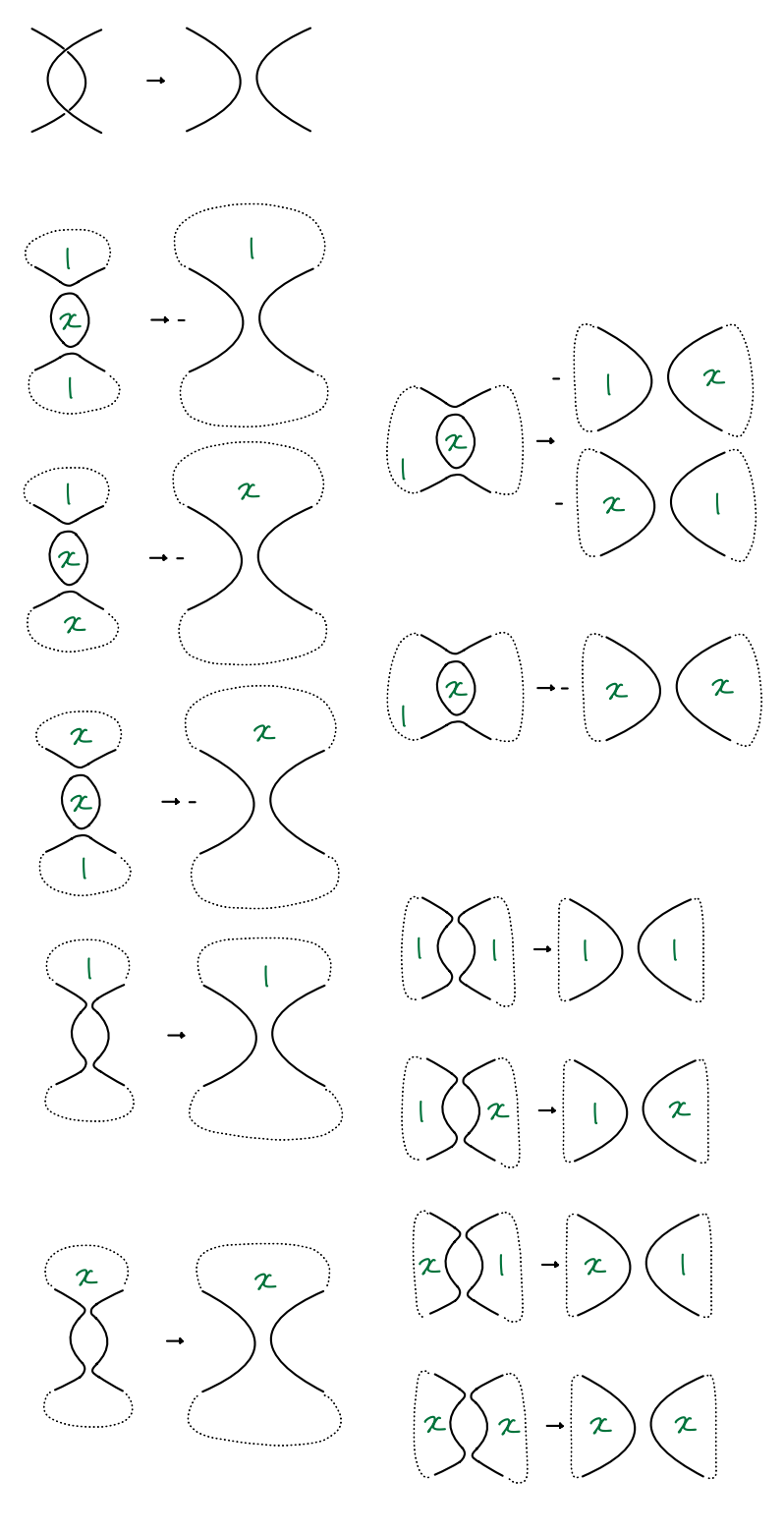}
    \caption{The chain map $\rho$ induced by the Reidemister II move.} 
    \label{reidemeister}
    \end{figure}
    \FloatBarrier

    \begin{figure}[!htbp]
    \centering
    \includegraphics[width=0.5\textwidth]{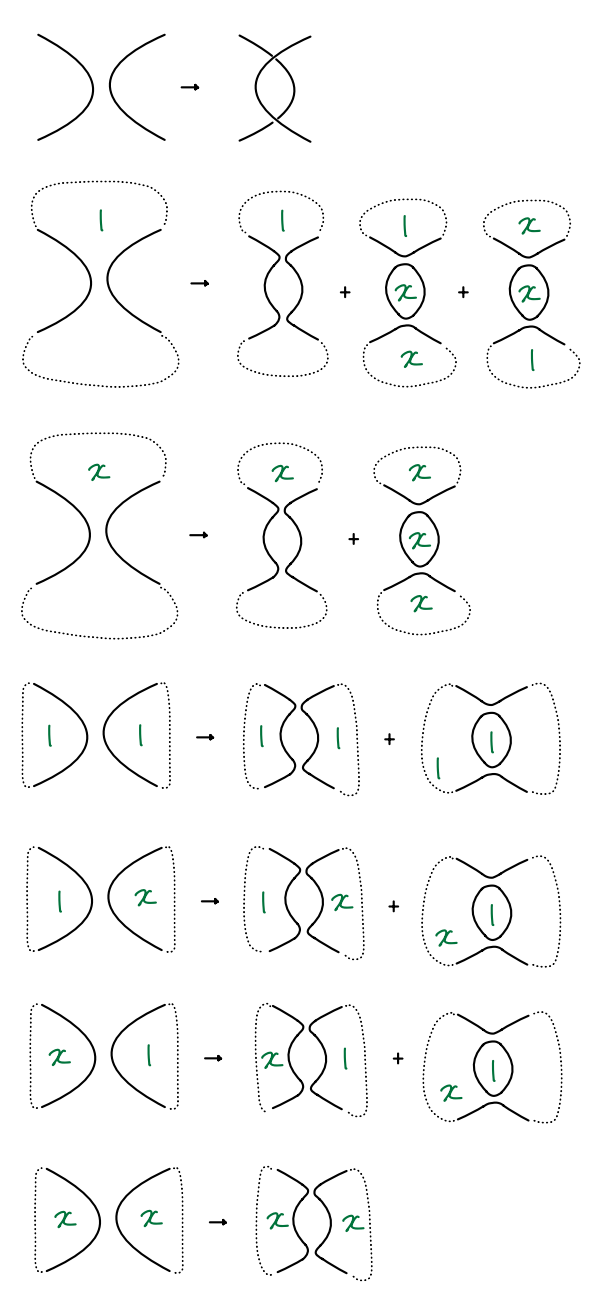}
    \caption{The chain map $\rho'$ induced by the reverse Reidemister II move.} 
    \label{reidemeister-1}
    \end{figure}
    \FloatBarrier

The above figures should be interpreted as follows. The chain maps $\rho$ and $\rho'$ keep the labels of components outside the corresponding local pictures unchanged. Any configuration of labeled smoothings not described in the above figures are mapped to zero.

\section{Roll-spun slice disk}

Let $K$ be a knot in $S^3$, and let $\bar K$ denote its mirror. \textit{Roll-spinning}, and more generally \textit{deform-spinning} due to \cite{Li79}, produce slice disks in $D^4$. We reproduce the construction given in \cite{JZ20}.

Let $B$ be an open 3-ball of $S^3$ such that $\bar B\cap K$ is an unknotted arc. Identify $S^3\setminus B \cong D^3$, and let $a$ be the (knotted) arc $K\cap (S^3\setminus B)$. Given an isotopy $\varphi: D^3\times I\to D^3$ such that $\varphi_0 = \text{id}_{D^3}$, $\varphi_t|_{\partial D^3} = \text{id}_{\partial D^3}$ for all $t\in I$ and $\varphi_1(a) = a$, the \textit{deform-spun slice disk} $D_{a,\varphi}$ in $D^4$ bounding $K\#\bar K$ is defined by taking
\[
    \bigcup_{t\in I} \varphi_t(a)\times \{t\} \subset D^3\times I
\]
and rounding the corners. In particular, deform-spinning $K$ using the trivial isotopy $\varphi_t = \text{id}_{D^3}$ yields the trivial slice disk bounding $K\#\bar K$. We denote this disk by $D_0$. 

\begin{lemma}[Lemma 3.3, \cite{JZ20}]
    Let $d$ be an automorphism of $(D^3,a)$ such that $d|_{\partial D^3} = \text{id}_{\partial D^3}$. Then there is an isotopy $\varphi : D^3\times I$ such that $\varphi_0 = \text{id}_{D^3}$, $\varphi_1 = d$. Furthermore, the isotopy class of $D_{a,\varphi}$ depends only on $d$.
\end{lemma}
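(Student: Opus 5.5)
The lemma has two parts. The first is the existence of an isotopy from the identity to $d$, rel boundary. The second is that the isotopy class of $D_{a,\varphi}$ depends only on $d$. I will prove them separately, using only classical facts about diffeomorphisms of $D^3$.

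\emph{Existence.} Forget the arc $a$ and regard $d$ as a diffeomorphism of $D^3$ that restricts to the identity on $\partial D^3$. By Cerf's theorem (equivalently Hatcher's proof of the Smale conjecture, $\pi_0\,\mathrm{Diff}(D^3 \text{ rel } \partial)=0$), $d$ is isotopic to $\mathrm{id}_{D^3}$ rel $\partial D^3$. Reparametrize this isotopy so that it is stationary near $t=0$ and $t=1$. Then it starts at $\varphi_0=\mathrm{id}$, ends at $\varphi_1=d$, and satisfies $\varphi_t|_{\partial D^3}=\mathrm{id}$ for every $t$. Since $d(a)=a$, we get $\varphi_1(a)=a$, so $D_{a,\varphi}$ is defined.

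\emph{Independence.} Let $\varphi$ and $\psi$ be two such isotopies from $\mathrm{id}$ to $d$. Each is a path in $\mathrm{Diff}(D^3 \text{ rel } \partial)$ with the same endpoints. Concatenating $\varphi$ with the reverse of $\psi$ gives a loop based at $\mathrm{id}$. By Hatcher, $\mathrm{Diff}(D^3 \text{ rel }\partial)$ is contractible, so in particular $\pi_1=0$. Hence there is a homotopy of paths $\Phi_s$, $s\in[0,1]$, with $\Phi_0=\varphi$ and $\Phi_1=\psi$, such that every $\Phi_s$ is an isotopy rel boundary from $\mathrm{id}$ to $d$.

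The surfaces
\[
\Sigma_s=\bigcup_{t\in I}(\Phi_s)_t(a)\times\{t\}
\]
then form a smooth one-parameter family of embedded disks in $D^3\times I$. Each $\Sigma_s$ is the image of the fixed surface $a\times I$ under the level-preserving diffeomorphism $(x,t)\mapsto((\Phi_s)_t(x),t)$, so it is embedded. Its boundary $a\times\{0\}\cup a\times\{1\}\cup(\partial a\times I)$ is independent of $s$, because $(\Phi_s)_0=\mathrm{id}$, $(\Phi_s)_1=d$ with $d(a)=a$, and every $(\Phi_s)_t$ is the identity on $\partial D^3$. Choose the corner-rounding once, in a collar of $\partial(D^3\times I)$ where all the $\Phi_s$ are stationary. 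The family $\Sigma_s$ is then a smooth isotopy of slice disks rel boundary from $D_{a,\varphi}$ to $D_{a,\psi}$.

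The main obstacle is the input $\pi_0=\pi_1=0$ for $\mathrm{Diff}(D^3 \text{ rel }\partial)$. The $\pi_1$ statement genuinely needs Hatcher's theorem; Cerf's result only gives $\pi_0$. I would cite it rather than reprove it, as \cite{JZ20} does. The remaining work, namely making the isotopy stationary near the ends and keeping the rounding compatible, is routine smoothing.
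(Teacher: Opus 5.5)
Your proposal is correct and follows the standard argument. The paper gives no proof of its own and simply imports the lemma from \cite{JZ20}, whose proof uses the same input: $\mathrm{Diff}(D^3 \text{ rel } \partial)$ is connected (Cerf) and simply connected (Hatcher), and a homotopy rel endpoints between two isotopies from $\mathrm{id}$ to $d$ sweeps out an isotopy of the deform-spun disks, exactly as you describe. The details you leave implicit are routine: smoothing the two-parameter family, and reparametrizing it uniformly so that it is stationary near $t=0,1$, which lets the corner rounding happen where nothing moves.
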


Hence we may denote $D_{a,\varphi}$ by $D_{a,d}$. Roll-spinning can be viewed as a special case of deform-spinning, earned by choosing $d$ as follows. 

\begin{defn}[Definition 3.5, \cite{JZ20}]
    Let $\nu K\cong K\times D^2$ be a tubular neighborhood of $K$ in $S^3$. Let $X = S^3\setminus \text{int}(\nu K)$ be the knot exterior. Let $\partial X\times I$ be the collar of $\partial X$ contained in $X$. Identifying $K\cong \mathbb R/\mathbb Z$, choose a smooth monotonically increasing function $\varphi : \mathbb R\to I$ with $\varphi(t) = 0$ for $t\leq 0$ and $\varphi(t) =1$ for $t\geq 1$. Define the diffeomorphism $r : (S^3,K)\to (S^3,K)$ by 
    \[
    r(\bar x, \bar\theta, t) = (\overline{x+\varphi(t)}, \bar \theta, t)
    \]
    for $(\bar x, \bar\theta, t)\in K\times \partial D^2 \times I\cong \partial X\times I$, and setting $r$ to be the identity outside the collar $\partial X\times I$.
\end{defn}

\begin{defn}
    Let $K$ be a knot in $S^3$, and let $B$ be an open 3-ball of $S^3$ such that $\bar B\cap K$ is an unknotted arc. The deform-spun slice disk $D_{a,r}$ is the \textit{roll-spun slice disk} bounding $K\#\bar K$ in $D^4$, which we denote $D_\text{roll}$.
\end{defn}

\subsection{Movie decomposition of $D_\text{roll}$}

Let $K$ be an oriented knot in $S^3$. Choose a planar diagram of $K$, which by abuse of notation we also denote by $K$. 

Let $D_0$ and $D_\text{roll}$ be the trivial and roll-spun slice disks bounding $K\#\bar K$ respectively, viewed as cobordisms $K\#\bar K\to \varnothing$. Let $R$ denote the cobordism from $K\#\bar K$ to itself obtained as follows. First, shrink a ball containing $\bar K$ and a small neighborhood of the attaching region for $K\#\bar K$, small enough such that planar translations of the ball fit into the local pictures of $K$ around each crossing. Move the ball together with the attaching region along $K$ in accordance with the orientation of $K$. Once the ball has traversed around $K$ once, enlarge the ball to its original position. The following observation was noted in \cite{JZ20}.

\begin{lemma}
    $D_\text{roll}$ is isotopic to $D_0\circ R$.
\end{lemma}

\begin{proof}
    Let $I$ be the cylinder cobordism $(K\#\bar K)\times [0,1]$. Then $D_\text{roll}\cong D_\text{roll}\circ I$. View this cobordism as lying in $D^4$, as illustrated in Figure \ref{CobD0}. By pushing the complicated portion of the cobordism towards one end of the cylinder as in Figure \ref{CobDroll}, we obtain $D_\text{roll}\circ I\cong D_0\circ R\circ I$.
\end{proof}

    \begin{figure}[!htbp]
    \centering
    \includegraphics[width=0.25\textwidth]{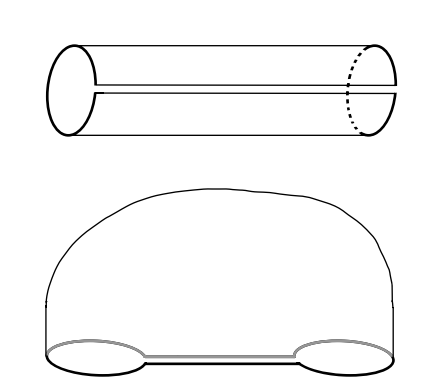}
    \caption{The trivial slice disk $D_0$ when $K$ is the unknot.} 
    \label{CobD0}
    \end{figure}
    \FloatBarrier

    \begin{figure}[!htbp]
    \centering
    \includegraphics[width=0.5\textwidth]{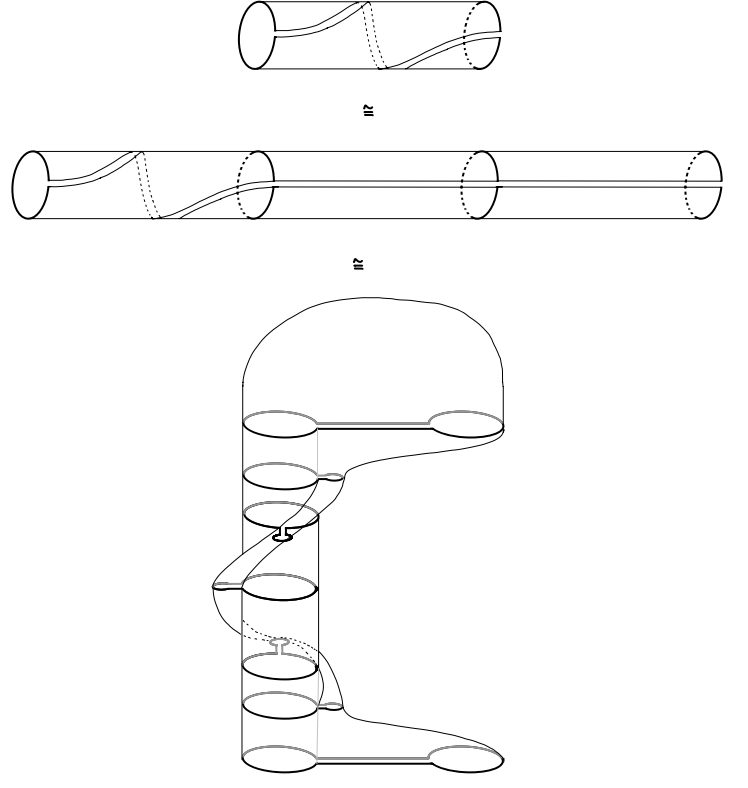}
    \caption{The roll-spun slice disk $D_\textbf{roll}$ when $K$ is the unknot. The isotopy between $D_\text{roll}$ and $D_0\circ R$ is described also.} 
    \label{CobDroll}
    \end{figure}
    \FloatBarrier

Let $H$ denote the cobordism obtained by performing a $1$-handle move at the band very near the attaching region for $\bar K$ in the connected sum. To further simplify the cobordisms in hand, we consider $D_0\circ H$ and $D_\text{roll}\circ H\cong D_0\circ R\circ H$. The latter cobordism can be broken down into simpler cobordisms $S$ and $E$, which admit Carter-Saito movie decompositions consisting only of Reidemeister II moves.

    \begin{figure}[!htbp]
    \centering
    \includegraphics[width=0.8\textwidth]{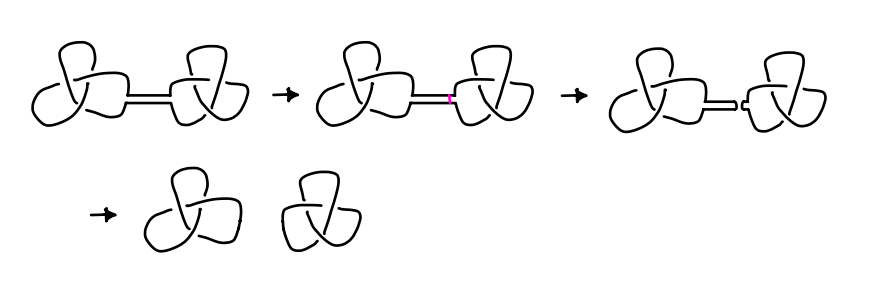}
    \caption{Movie decomposition of the 1-handle cobordism $H$.} 
    \label{CobH}
    \end{figure}
    \FloatBarrier

The cobordism $E : K\sqcup \bar K\to K'\sqcup \bar K$ is defined as follows. Here $K'$ is an equivalent diagram of $K$ which will be described below. Replace the attaching region for $K\#\bar K$ in $K$ with a bump protruding towards $\bar K$, which we call a \textit{tentacle}. Choose a ball containing $\bar K$ and the tip of the tentacle, and move the ball along $K$. Once the ball has traversed around $K$ once, we arrive at a link diagram that we denote by $K'\sqcup \bar K$. Whenever the tentacle encounters a strand of $K$, it moves under/over, depending on the under/over data on the corresponding crossing of $K$. Hence $E$ can be decomposed into a movie consisting only of Reidemeister II moves.

    \begin{figure}[!htbp]
    \centering
    \includegraphics[width=0.8\textwidth]{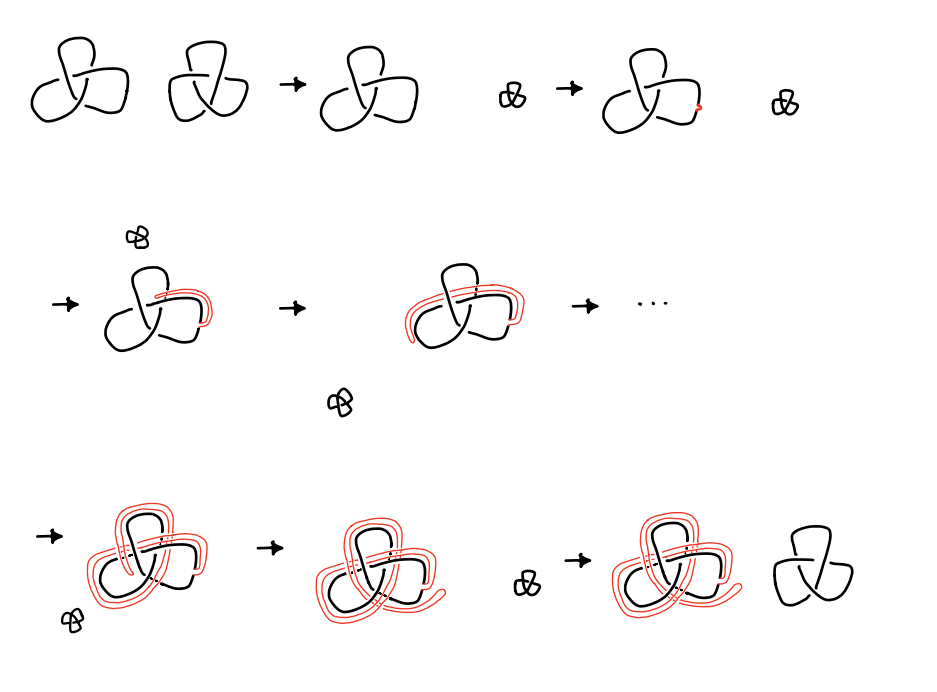}
    \caption{Movie decomposition of the cobordism $E$.} 
    \label{CobE}
    \end{figure}
    \FloatBarrier

The cobordism $H' : K'\sqcup \bar K\to K'\#\bar K$ is then obtained by performing a $1$-handle move between the tip of the tentacle, and the attaching region for $K\#\bar K$ in $\bar K$. 

    \begin{figure}[!htbp]
    \centering
    \includegraphics[width=0.75\textwidth]{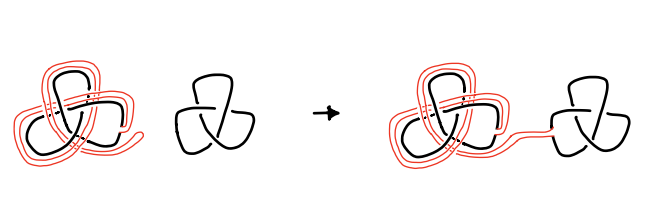}
    \caption{Movie decomposition of the 1-handle cobordism $H'$.} 
    \label{CobH_}
    \end{figure}
    \FloatBarrier

Finally, the cobordism $S : K'\#\bar K\to K\# \bar K$ is earned by moving the base of the tentacle along $K$. Once the base traverses around $K$ once, we retrieve $K\#\bar K$. Equivalently, $S$ is obtained by performing a series of Reidemeister II moves along the region formed between the tentacle and $K$, and then performing a planar isotopy.

    \begin{figure}[!htbp]
    \centering
    \includegraphics[width=0.8\textwidth]{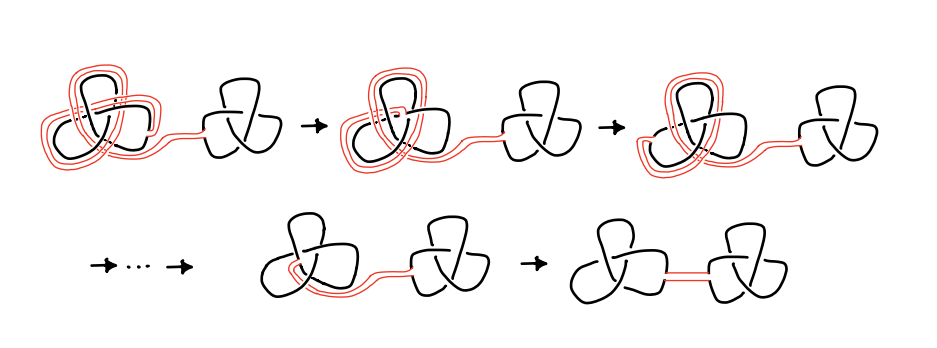}
    \caption{Movie decomposition of the cobordism $S$.} 
    \label{CobS}
    \end{figure}
    \FloatBarrier

\begin{lemma}
    $R\circ H$ is isotopic to $S\circ H'\circ E$.
\end{lemma}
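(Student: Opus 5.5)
The plan is to view all four cobordisms as traces of ambient isotopies of $S^3$ (or of a product $S^3\times I$), interrupted by a single saddle. The saddle can then be slid along the time direction. Both $R\circ H$ and $S\circ H'\circ E$ consist of exactly one $1$-handle together with motions of diagrams. It therefore suffices to show two things: the band can be transported through the isotopy part of $R$ to the earlier time, and the resulting band, viewed after the $E$-motion, is the band of $H'$. The remaining isotopy part must then be $S$. We will use the standard fact that if $\Phi_t$ is an ambient isotopy of $S^3$ and $\Sigma_b$ denotes the saddle cobordism along a band $b$, then the trace of $\Phi$ composed after $\Sigma_b$ is isotopic, rel boundary, to $\Sigma_{\Phi_1^{-1}(b)}$ composed after the trace of $\Phi$.

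\textbf{Step 1: rewrite $R$ as the trace of an isotopy.} Denote the shrink--transport--enlarge motion by $\Psi_t$, a compactly supported isotopy of $S^3$ moving a small ball $B'$ (containing $\bar K$ and the connecting neighborhood) once around $K$. Then $R$ is the trace of $\Psi$ applied to $K\#\bar K$. The ball $B'$ sweeps out a tube around a loop parallel to $K$, and $\Psi_1$ restricted to $K\#\bar K$ is the identity setwise.

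\textbf{Step 2: decompose $\Psi$ and slide the saddle.} Write $\Psi$ as the composite of two motions. The first motion $\Psi^{\mathrm{ball}}$ moves only a tiny ball containing $\bar K$ and the tip of a thin tentacle; this is exactly the motion in $E$, and it stretches the tentacle along $K$. The second motion $\Psi^{\mathrm{base}}$ drags the base of the tentacle around $K$ to catch up; this is exactly the motion in $S$. The composite is isotopic to $\Psi$ through isotopies with the same endpoint map, since both move the connecting region once around $K$ inside the same solid torus neighborhood. We isotope $R$ accordingly, choosing the band of $H$ to lie inside the moving ball, near the attaching region of $\bar K$. The H-band then travels with the ball. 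Applying the sliding fact with $\Phi=\Psi^{\mathrm{ball}}$, we move the saddle past the tentacle motion. The transported band is precisely the band joining the tip of the tentacle to $\bar K$, which is $H'$. Before the saddle, the link is the split $K\sqcup \bar K$ with a tentacle, so the $E$ motion is legitimately a cobordism $K\sqcup\bar K\to K'\sqcup\bar K$.

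\textbf{Step 3: identify the remaining part with $S$.} After the saddle we have $K'\#\bar K$, and the remaining motion $\Psi^{\mathrm{base}}$ retracts the tentacle, giving $S$. Hence $R\circ H\simeq S\circ H'\circ E$.

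The main obstacle is the claim in Step 2 that $\Psi$ factors up to isotopy as "move the tip, then move the base," and that the band of $H$ commutes with $\Psi^{\mathrm{ball}}$. The isotopy between the two motions must be realized rel the endpoint links, or at least through motions preserving $K\#\bar K$ at the ends, so that it induces an isotopy of cobordisms. I would try to handle this by working in the solid torus $\nu K$ and using that both motions are parametrized by the same loop, pushing forward time-reparametrization. Concretely, the tip can run ahead of the base because the tentacle is a thin strip inside $\nu K$, and the difference between the two motions is just a homotopy of the time parameters (tip time $\ge$ base time). Such a homotopy of parameters gives an isotopy of the traces.
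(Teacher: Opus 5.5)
Your proposal is correct and is essentially the paper's argument. The paper also isotopes $R\circ H$ by ``unwinding'' the loop traced by the moving ball in $R$ and compensating in the portion containing the $1$-handle, so that the traversal of $\bar K$ happens before the saddle (giving $E$ and $H'$) and the leftover motion of the base gives $S$; your factorization of $\Psi$ through the parameter region where the tip runs ahead of the base, together with the saddle-sliding lemma, makes that picture proof explicit. One small slip: if $\mathrm{Tr}(\Phi)\circ\Sigma_b$ means the saddle is performed first, the slid band should be $\Phi_1(b)$ rather than $\Phi_1^{-1}(b)$, and the forward transport $\Phi_1(b)$ is in fact what you use in Step 2.
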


\begin{proof}
    The moving ball in $R$ traces a tubular neighborhood of a curve in $S^3\times [0,1]$ that projects to a generator of $\pi_1(K)\cong \mathbb Z$. The isotopy $R\circ H\cong S\circ H'\circ E$ is obtained by unwinding this curve, at the cost of moving neighborhoods of $\bar K$ in the portion corresponding to $H$ towards the opposite orientation of the above generator. See Figures \ref{Cob1} and \ref{Cob2}.
\end{proof}

    \begin{figure}[!htbp]
    \centering
    \includegraphics[width=0.5\textwidth]{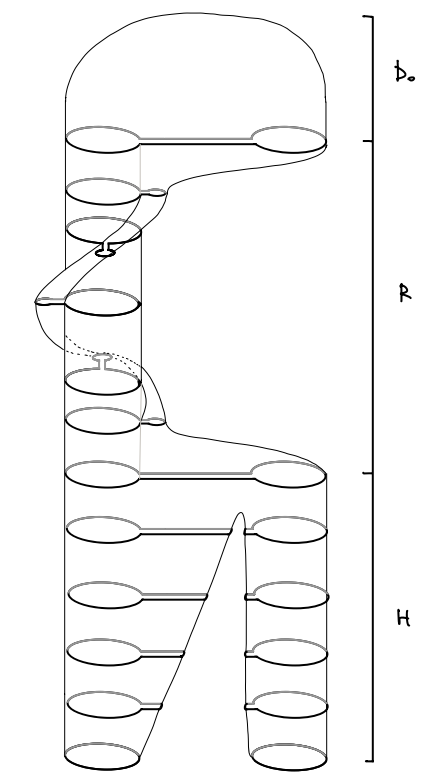}
    \caption{The cobordism $D_0\circ R\circ H$ when $K$ is the unknot.}
    \label{Cob1}
    \end{figure}
    \FloatBarrier

    \begin{figure}[!htbp]
    \centering
    \includegraphics[width=0.5\textwidth]{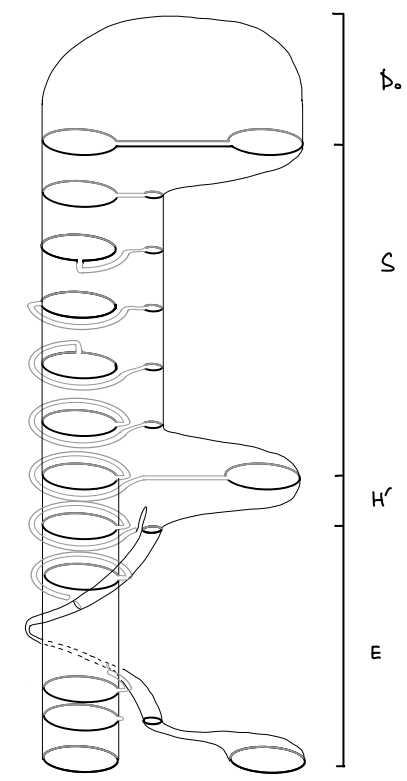}
    \caption{The cobordism $D_0\circ S\circ H'\circ E$ when $K$ is the unknot. The isotopy from $D_0\circ R\circ H$ to $D_0\circ S\circ H'\circ E$ is obtained by undoing the twist in $R$ by twisting $H'\circ E$ in the opposite direction.}
    \label{Cob2}
    \end{figure}
\FloatBarrier

\section{Khovanov homology does not distinguish $D_0\circ H$ and $D_{roll}\circ H$}\label{4}

We assume that the diagram $K$ is chosen such that it has a minimal number of crossings. The presence of nugatory crossings complicates the proof of the main result, so we remove them from the onset to streamline the argument.

\begin{theorem}\label{mainresult}
    Khovanov homology fails to distinguish $D_0\circ H$ and $D_\text{roll}\circ H$ viewed as cobordisms $K\sqcup \bar K\to \varnothing$ up to signs. More precisely, the maps
    \[
        Kh(H),\; Kh(S\circ H'\circ E) : Kh(K\sqcup \bar K)\to Kh(K\#\bar K)
    \]
    agree up to signs.
\end{theorem}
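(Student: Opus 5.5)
We compare the chain maps directly. Write $CKh(H)=\mu_1$ and $CKh(S\circ H'\circ E)=\sigma\circ\mu_1'\circ\varepsilon$, where $\varepsilon$ and $\sigma$ are composites of the maps $\rho,\rho'$ attached to the Reidemeister II moves in the movies of Figures \ref{CobE} and \ref{CobS}, and $\mu_1'$ is the $1$-handle map for $H'$. The goal is to prove these two composites agree up to sign on a set of cycles generating $Kh(K\sqcup\bar K)$, and in fact on every labeled smoothing of $K\sqcup\bar K$ modulo boundaries. Since the movies for $E$ and $S$ involve only Reidemeister II moves, every intermediate complex has a simple description. A smoothing of the diagram $K'$ (the tentacle diagram) is a smoothing $\sigma$ of $K$ together with one resolution at each of the $2n$ new crossings. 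These new crossings come in pairs, one pair at each crossing $c$ of $K$ that the tentacle passes under or over.

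\textbf{Step 1: $\varepsilon$ as a cable insertion.} Start from a labeled smoothing $a_\sigma\otimes b$ of $K\sqcup\bar K$. Track it through the $n$ successive moves $\rho'$ that push the tentacle across each crossing of $K$. By the formula for $\rho'$ in Figure \ref{reidemeister-1}, each move sends a smoothing to a sum of two terms. One is the ``oriented'' $00$ (or $11$) resolution of the new pair, which leaves the tentacle as a parallel arc. The other is the resolution that attaches a small circle, labeled via $\Delta$-type corrections. I would prove by induction on the number of crossings passed that $\varepsilon(a_\sigma\otimes b)$ is the labeled smoothing in which the tentacle runs parallel to the strand of $\sigma$ at each crossing, plus correction terms. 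Each correction term contains a small circle next to some crossing $c$, and that circle carries the mixed resolution of the new pair. The minimality assumption (no nugatory crossings) enters here. It guarantees that the two strands at each crossing of $K$ that the tentacle follows are distinct arcs of the smoothing, so the induction does not close up on itself.

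\textbf{Step 2: pushing through $\mu_1'$ and $\sigma$.} Apply $\mu_1'$, which merges the tip of the tentacle with the $\bar K$-component carrying $b$. Then apply $\sigma$, which consists of $\rho$-moves that undo the pairs in the reverse order. Term by term:
\begin{itemize}
\item The main term, where the tentacle is a parallel arc everywhere, is carried by the $\rho$ maps exactly to $\mu_1(a_\sigma\otimes b)$. Indeed, the parallel arc is isotopic to the band of $H$, and $\rho$ acts as the identity on the oriented resolution, up to the sign $(-1)^{\xi}$ coming from the crossing ordering.
\item The correction terms, with a small circle at some crossing, should be killed by $\rho$. They are exactly the configurations that Figure \ref{reidemeister} sends to zero, or else they cancel in pairs.
\end{itemize}
The global sign is bookkept by placing the $2n$ new crossings last in the ordering, so all signs $(-1)^{\xi_i}$ change by a uniform factor.

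\textbf{Main obstacle.} The hard step is the cancellation of the correction terms in Step 2. The merge by $\mu_1'$ can change which small circles are joined to the $\bar K$ component. A correction term created by $\rho'$ at crossing $c$ may then not be literally a zero configuration for the corresponding $\rho$ at the end. The $1$-handle sits between the $E$ and $S$ moves, so the local pictures do not simply invert. My first approach would be to compute everything at the level of a single crossing: treat the tentacle passing one crossing, then the handle, then the retraction as a local tangle computation, and show the composite local map is the identity plus a null-homotopic term (an expression $dh+hd$). Then I would assemble these local identities crossing by crossing, using that maps on disjoint regions commute. If exact cancellation fails, I would instead show that the discrepancy lands in boundaries when evaluated on cycles. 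That is enough, since the statement concerns the induced maps on $Kh$.
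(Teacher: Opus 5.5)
\paragraph{The missing step.} Your outline leaves the actual content of the theorem unproved. Everything hinges on disposing of the ``correction terms'', and you only list candidate mechanisms: killed by $\rho$ or cancelling in pairs, failing that a local null-homotopy, failing that a boundary.

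The paper handles this at the chain level with a rigidity argument:
\begin{itemize}
\item Any summand of $CKh(E)(a)$ that survives $CKh(S\circ H')$ has forced resolutions at the new crossings around each crossing of $K$ (Lemma \ref{rigiditylemma}).
\item It has forced labels on the small components (Lemmas \ref{kill1}, \ref{birth1}).
\item It carries the label of $a$ along every arc of $K$ (Propositions \ref{localElemma}, \ref{localElemmax} and Lemmas \ref{farEmovelemma1}, \ref{farEmovelemmax}, \ref{beginlemma}, \ref{endlemma}).
\end{itemize}
So exactly one summand survives, and $CKh(S)$ sends it to $\pm CKh(H)(a)$. Nothing in your proposal plays this role.

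Your local fallback is not set up either. The movie has a single handle $H'$, after all of $E$ and before all of $S$, so there is no per-crossing ``tentacle, handle, retraction'' composite to compute. Labels also propagate along global components, which is exactly what the far-move lemmas handle.

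\paragraph{Errors in the setup.} The picture you would compute with is wrong in ways that matter.

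(i) The crossing count is off. The tip travels all the way around $K$, so it passes each crossing of $K$ twice. On the second pass it crosses the transverse strand of $K$ \emph{and} the two tentacle strands already lying there. That gives four RII pairs per crossing, i.e.\ eight new crossings forming the $3\times 3$ grid $c_{ij}$ of Figure \ref{crossinglabel}, so $8n$ new crossings rather than $2n$. Later passes cut open components created by earlier ones, so a correction term is not a stable ``small circle next to $c$''. The grid even has a genuine two-fold ambiguity (at $c_{10}$) that is removed only by comparing two adjacent crossings.

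(ii) $S$ is not $E$ undone in reverse order. If it were, $S\circ H'\circ E$ would be isotopic to $H$ and there would be nothing to prove. $S$ slides the \emph{base}, so it erases the original strand of $K$ together with the lower tentacle strand, including $c=c_{00}$ itself. The crossings of the resulting $K\#\bar K$ are the upper-strand crossings $c_{22}$. So ``the parallel arc is isotopic to the band'' proves nothing. What must be shown is that the surviving summand resolves each $c_{22}$ the way $a$ resolves $c$, and carries the labels of $a$ on the upper strand.

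(iii) The survivor need not be your circle-free main term. It can contain components between a strand of $K$ and the lower tentacle strand, which are forced to be labeled $x$. It can also contain components inside the tentacle, which are forced to be labeled $1$. The sign is the parity of the former and comes from the minus sign in the RII maps, not from the ordering of crossings.
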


\begin{cor}\label{corollary}
    Let $K$ be a knot such that the $1$-handle move $Kh(H):Kh(K\sqcup \bar K)\to Kh(K\#\bar K)$ is surjective. Then Khovanov homology fails to distinguish the trivial slice disk from the roll-spun slice disk viewed as cobordisms $K\#\bar K\to \varnothing$, up to signs.
\end{cor}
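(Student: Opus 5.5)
The plan is to deduce the corollary formally from Theorem \ref{mainresult}, using the surjectivity of $Kh(H)$ to cancel $H$ on the right. All the necessary identifications are already established at the level of isotopy and functoriality.

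First I would assemble the relevant isotopies. By the lemma identifying $D_\text{roll}$ with $D_0\circ R$, we have $D_\text{roll}\cong D_0\circ R$ as cobordisms $K\#\bar K\to\varnothing$. By the lemma identifying $R\circ H$ with $S\circ H'\circ E$, we then get
\[
D_\text{roll}\circ H\cong D_0\circ R\circ H\cong D_0\circ S\circ H'\circ E .
\]
Since the slice disks live in $B^4$, I would invoke functoriality in its $S^3$ form, as proven in \cite{MWW22}, rather than the $\mathbb R^3$ version of \cite{Ja04}. Functoriality says isotopic cobordisms induce equal maps up to an overall sign, and that composition of cobordisms corresponds to composition of maps up to sign. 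This gives
\[
Kh(D_\text{roll})\circ Kh(H)=\pm Kh(D_0)\circ Kh(S\circ H'\circ E).
\]

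Next I would apply Theorem \ref{mainresult}, which states $Kh(S\circ H'\circ E)=\pm Kh(H)$. Substituting, we obtain
\[
Kh(D_\text{roll})\circ Kh(H)=\pm Kh(D_0)\circ Kh(H)
\]
as maps $Kh(K\sqcup\bar K)\to Kh(\varnothing)$.

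Finally I would cancel $Kh(H)$. Take any $y\in Kh(K\#\bar K)$. By hypothesis $y=Kh(H)(x)$ for some $x$, so
\[
Kh(D_\text{roll})(y)=\pm Kh(D_0)(y).
\]
This yields $Kh(D_\text{roll})=\pm Kh(D_0)$ as maps $Kh(K\#\bar K)\to Kh(\varnothing)$, which is the claim.

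The only point needing care is the sign. The sign $\pm$ above is one global sign, independent of $y$, because it arises from composing finitely many globally signed identities, each of which holds for whole maps rather than elementwise. Surjectivity is used only to pass from equality on the image of $Kh(H)$ to equality on all of $Kh(K\#\bar K)$. I do not expect any real obstacle: the content lies in Theorem \ref{mainresult}, and the corollary is the formal cancellation described above.
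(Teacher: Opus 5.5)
Your proposal is correct and is essentially the argument the paper intends; the paper gives no separate proof and treats the corollary as immediate from Theorem \ref{mainresult}. You get $Kh(D_\text{roll})\circ Kh(H)=\pm Kh(D_0)\circ Kh(H)$ from $D_\text{roll}\circ H\cong D_0\circ S\circ H'\circ E$ together with $S^3$-functoriality, and then cancel the surjective map $Kh(H)$. Your claim of a single global sign is valid only if Theorem \ref{mainresult} is read as one overall sign, as in the introduction's $Kh(D_0\circ H)=\pm Kh(D_\text{roll}\circ H)$. Any doubt about that reading comes from the paper's generator-by-generator sign count in the theorem's proof, not from your corollary argument.
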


In fact, we show that the equality $Kh(H) = \pm Kh(S\circ H'\circ E)$ already holds on the chain level by exploiting the observation that the generators of $CKh(K\sqcup \bar K)$ not mapped to zero under $CKh(S\circ H'\circ E)$ must take a particular form.

Let $a$ be a generator of $CKh(K\sqcup \bar K)$. Write $Kh(E)(a) = \sum_i b_i$ where $b_i$ are generators of $CKh(K'\sqcup \bar K)$. Consider a crossing $c$ in $K\subset K\sqcup \bar K$. Recall that $K$ is an oriented diagram, and without loss of generality assume that the crossing $c$ is arranged as in Figure \ref{rigidity0}. The over-and-under crossing data are irrelevant to the argument, and thus we suppress it in the figures.

\begin{figure}[!htbp]
    \centering
    \includegraphics[width=0.3\textwidth]{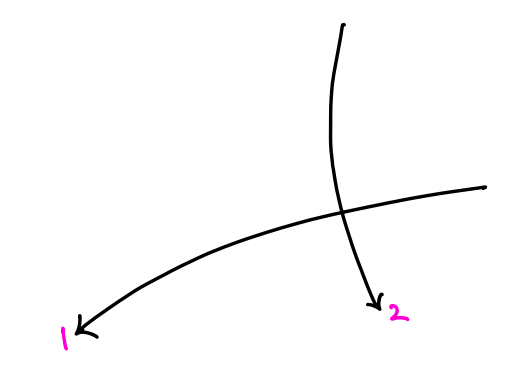}
    \caption{Oriented strands of the knot $K$ around a crossing $c$. The numbering indicates which strand is traversed first as the base of the handle moves along $K$.}
    \label{rigidity0}
\end{figure}
\FloatBarrier

The crossing $c$ could be resolved either by a $0$-smoothing or an $1$-smoothing in $a$. Since the chain map associated to a Reidemeister II move preserves the smoothing data outside of the local picture on which the move is applied to, it follows that the choice of smoothing of $c$ in $a$ is equal to that of $c$ in $b_i$. For ease of argument, let us label the crossings in the local picture around $c$ as in Figure \ref{crossinglabel}.

Since the goal is to compute $CKh(S\circ H'\circ E)(a)$, we may ignore the generators $b_i$ that are mapped to zero under $Kh(S)Kh(H')$. This trick lets us cut down the number of generators we have to consider significantly, in light of the following lemma.

\begin{figure}[!htbp]
    \centering
    \includegraphics[width=0.5\textwidth]{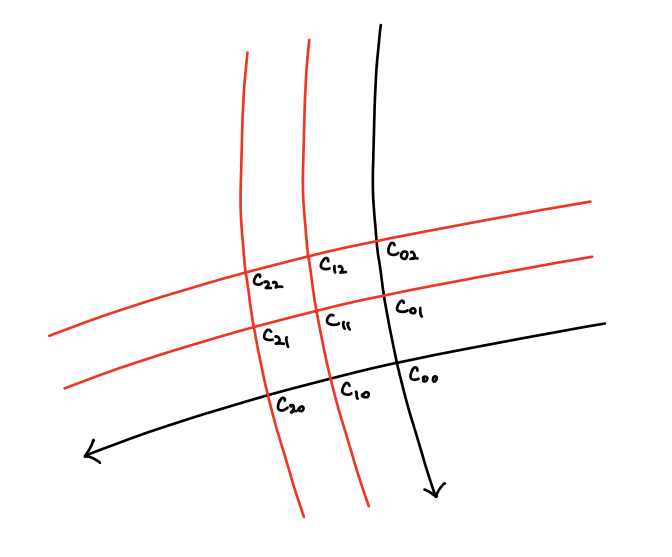}
    \caption{Crossings of $K'\sqcup K$ around $c$. The strands of $K$ are colored black, and strands of the tentacle are colored red.}
    \label{crossinglabel}
\end{figure}
\FloatBarrier

\begin{lemma}\label{rigiditylemma}
    For each smoothing of a crossing $c$ in $K$, there are only two possibilities for smoothings of nearby crossings of $c$ (as in Figure \ref{crossinglabel}) in $K'\sqcup \bar K$ so that the generators are not mapped to $0$ under $CKh(S\circ H')$.
\end{lemma}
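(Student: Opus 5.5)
Near a crossing $c$ of $K$, the tentacle of $K'$ runs alongside both strands $1$ and $2$ of Figure \ref{rigidity0}. The strand traversed second is doubled by the tentacle in the same way as the first. This creates the crossings labelled in Figure \ref{crossinglabel}: two pairs of crossings coming from the Reidemeister II moves in $E$, one pair for each strand of $K$ that the tentacle passes over or under at $c$. The plan is to work backwards from the empty-crossing end. $CKh(S\circ H')$ is a composite: first the $1$-handle map $\mu_1$ of $H'$, which does not change the smoothing of any crossing, then the chain maps $\rho$ of the Reidemeister II moves in $S$, which remove the tentacle crossings pair by pair. So I would first list which local smoothings of each pair survive $\rho$, using the explicit description in Figure \ref{reidemeister}.

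That figure shows that $\rho$ is nonzero only on generators in which the two crossings of the pair are smoothed $(0,1)$ or $(1,0)$ (in the appropriate order). It is also nonzero only when the small circle created by a $(0,0)$- or $(1,1)$-type configuration is absent, or carries a prescribed label. Moreover, for one of the two mixed smoothings, $\rho$ is nonzero only when the resulting arcs connect in a specific way. The key steps are as follows.

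First, I fix the smoothing of $c$, $0$ or $1$; as noted, this smoothing is the same in $a$ and in every $b_i$. I then look at the pair of tentacle crossings adjacent to strand $1$. In $S$, the base of the tentacle sweeps along $K$, so this pair is removed by a Reidemeister II move whose local picture is the region between the tentacle and $K$. Figure \ref{reidemeister} then forces this pair into one of at most two mixed smoothings.

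Second, I do the same for the pair adjacent to strand $2$. Now the smoothing of $c$ matters: it decides which arcs near $c$ are joined, so that only one choice for the second pair is compatible with each choice for the first. Any other choice would produce a closed circle at the moment $\rho$ is applied, and that circle is sent to $0$, or it would give the forbidden connectivity. The handle map $\mu_1$ of $H'$ lies at the tip of the tentacle, away from $c$, so it never changes these local constraints. I also use that $K$ has no nugatory crossings, which guarantees that the strands near $c$ genuinely belong to distinct local arcs. The result is exactly two allowed local smoothings for each smoothing of $c$.

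The main obstacle is the second step. There one must track how the arcs near $c$ are connected globally after the earlier Reidemeister II moves of $S$ have been applied, since $\rho$ at a later stage sees the already-modified diagram. My first approach is to argue inductively along the order in which $S$ removes the pairs, following the traversal order of Figure \ref{rigidity0}. I would show that after each removal, the local picture at the next pair has the standard form of Figure \ref{reidemeister}, and I would do the case check separately for $c$ positive and $c$ negative: four cases in total, each a finite inspection.
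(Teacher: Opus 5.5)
Your count of two depends on a local model of the neighbourhood of $c$ that is wrong in two ways.

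First, there are nine crossings near $c$, not five. The picture is a $3\times 3$ grid formed by the $K$-strand and the two tentacle strands running alongside each of strands $1$ and $2$. When the tip of the tentacle later runs along strand $2$, it crosses strand $1$ of $K$, and it also crosses both strands of the tentacle already lying along strand $1$. That produces four crossings between tentacle strands, which your proposal never mentions.

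Second, the Reidemeister II moves of $S$ do not remove the pairs that $E$ created. A move of $E$ creates the two crossings of one transverse strand with the two strands of the tentacle tip; for example, $c_{01},c_{02}$ arise as the tip enters along the horizontal strand. A move of $S$ instead removes the two crossings of one transverse strand with the U-turn formed by a strand of $K$ and the lower tentacle strand; for example, it removes $c=c_{00}$ together with $c_{01}$. Only the crossing of the two upper tentacle strands survives $S$.

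The rigidity in the lemma comes from the way these two pairings interlock, so it needs constraints from both $S$ and $E$; you only use $\rho$. The missing input is that $b_i$ is a summand of $CKh(E)(a)$. Hence every pair created by $E$ must lie in one of the two resolutions that $\rho'$ outputs (Figure~\ref{rigid2}).

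The paper's proof alternates between the two sides:
\begin{itemize}
\item $S$ forces $c_{01}$ from $c_{00}$, then $E$ forces $c_{02}$, then $S$ forces $c_{12}$, then $E$ forces $c_{22}$.
\item The remaining crossings $c_{10},c_{11},c_{21},c_{20}$ form a closed loop of alternating $S$- and $E$-pairs, so choosing the smoothing at $c_{10}$ determines the other three.
\end{itemize}
That free choice at $c_{10}$ is not tied to $c$ at all, and it is the source of the two possibilities. It is not a choice on a ``first pair'' next to $c$; the $S$-pair containing $c$ is completely determined once $c$ is smoothed.

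With the $S$-conditions alone, each $S$-pair not containing $c$ keeps an independent binary choice, and the surviving upper--upper crossing is unconstrained. So your step ``only one choice for the second pair is compatible with each choice for the first'' cannot be justified, and the appeal to closed circles and ``forbidden connectivity'' does not replace the $E$-side constraint.

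Your description of $\rho$ is also off. The small circle occurs in one of the two mixed resolutions, and that resolution is killed when the circle is labeled $1$. The $(0,0)$ and $(1,1)$ resolutions carry no such circle and are killed outright for degree reasons.
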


\begin{figure}[!htbp]
    \centering
    \includegraphics[width=0.65\textwidth]{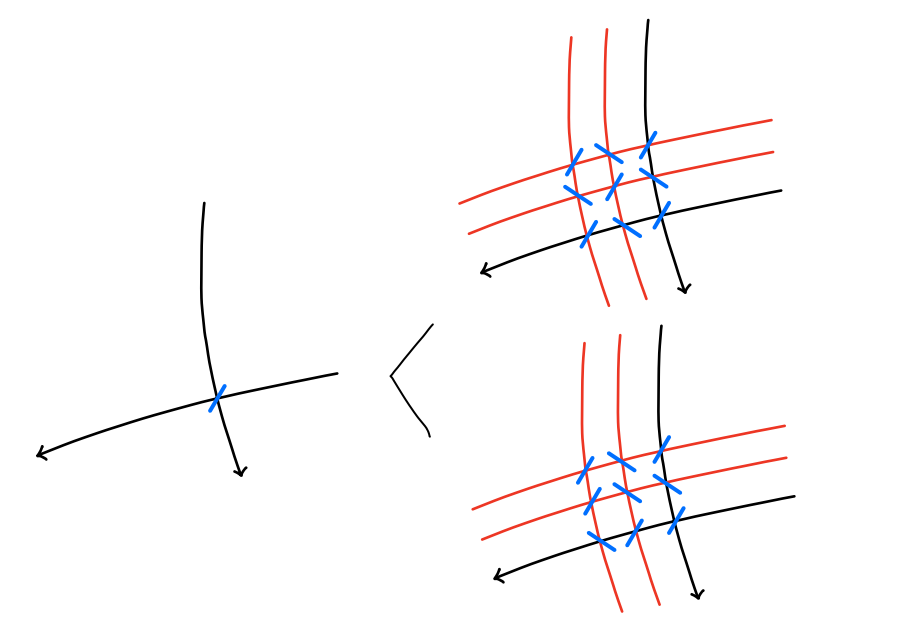}
    \label{smoothing1}
\end{figure}

\begin{figure}[!htbp]
    \centering
    \includegraphics[width=0.65\textwidth]{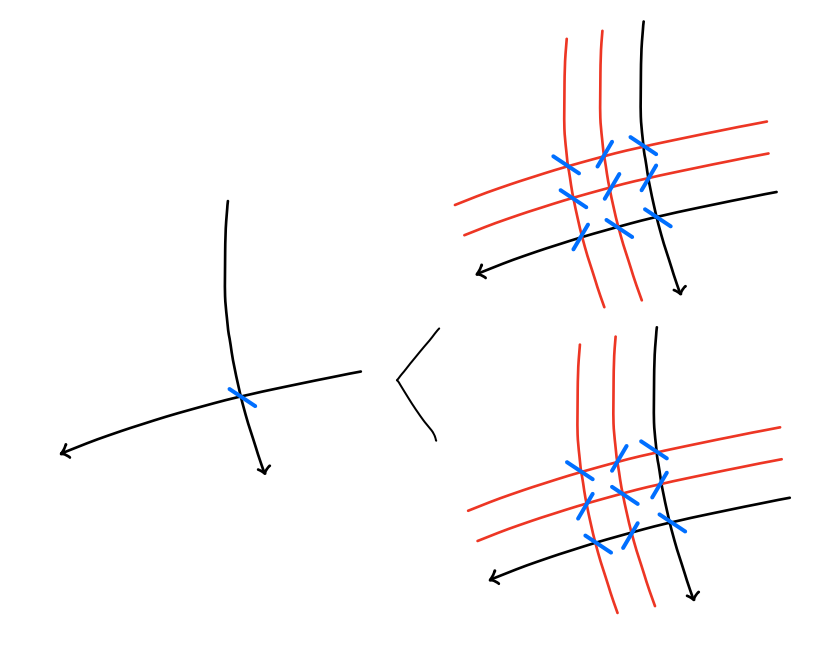}
    \caption{A choice of smoothing of $c$ in $a$ induces two possibilities for smoothings of nearby crossings in $b_i$ not killed by $CKh(S\circ H')$.}
    \label{smoothing2}
\end{figure}
\FloatBarrier

\begin{proof}
    Suppose the crossing $c=c_{00}$ is resolved as in the top left corner of Figure \ref{smoothing1}. Recall that $S$ decomposes into a series of Reidemeister II moves. Inspecting such a Reidemeister move occurring around $c$, we observe that $c_{01}$ must be resolved as in the top left corner of Figure \ref{rigid1} --- otherwise it is killed by the Reidemeister II move, and hence $CKh(S\circ H')$. 

    \begin{figure}[!htbp]
        \centering
        \includegraphics[width=0.65\textwidth]{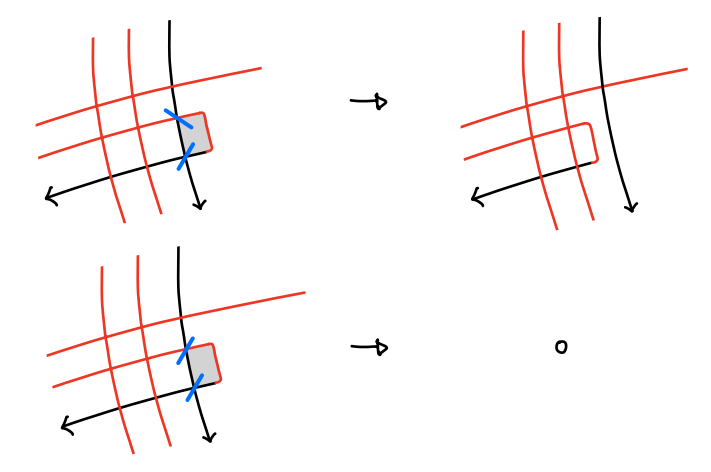}
        \caption{The Reidemeister II move of $S$ on the shaded area induces the identity on the top left smoothing (after planar isotopy), while sending the bottom left smoothing to zero.}
        \label{rigid1}
    \end{figure}
    \FloatBarrier

    Similarly, we inspect a Reidemeister II move of $E$ occuring around $c$. As the tentacle of $E$ first enters the local picture through the horizontal strand of $K$, it creates two crossings $c_{01}$ and $c_{02}$. Recalling the effect of Reidemeister II moves on $CKh$, we note that they must be smoothed as in the right of Figure \ref{rigid2}. But as noticed in the previous argument, the bottom right smoothing is killed by $CKh(S\circ H')$. Hence $c_{02}$ must be resolved as in the top right corner of Figure \ref{rigid2}.

    \begin{figure}[!htbp]
        \centering
        \includegraphics[width=0.65\textwidth]{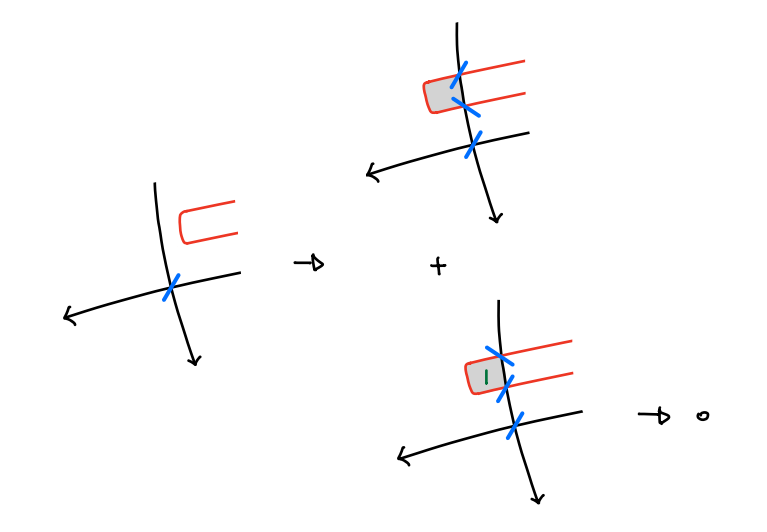}
        \caption{As the tentacle enters the local picture, the Reidemeister II move of $E$ near the shaded area maps the left smoothing to the sum of the two smoothing on the right. The bottom right smoothing is mapped to zero by $CKh(S\circ H')$.}
        \label{rigid2}
    \end{figure}
    \FloatBarrier

    Repeating this process for $c_{12}$ and $c_{22}$, we observe that these crossings are forced to be smoothed as in the right of Figure \ref{rigid3}.

    \begin{figure}[!htbp]
        \centering
        \includegraphics[width=0.65\textwidth]{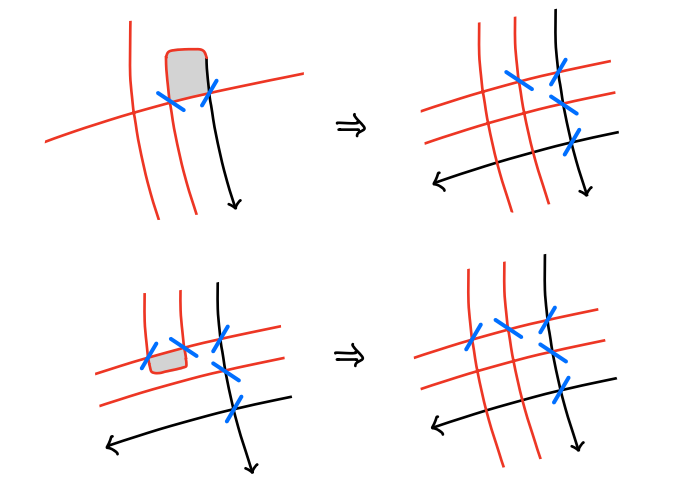}
        \caption{$S$ forces $c_{12}$ to be resolved as in the top right, and $E$ forces $c_{22}$ to be resolved as in the bottom right.}
        \label{rigid3}
    \end{figure}
    \FloatBarrier

    Now there is a choice to make. The remaining crossings are not forced to be resolved in a particular manner. But if we choose a smoothing on a single crossing, say $c_{10}$, the remaining three crossings are forced to be smoothed accordingly. 

    \begin{figure}[!htbp]
        \centering
        \includegraphics[width=0.65\textwidth]{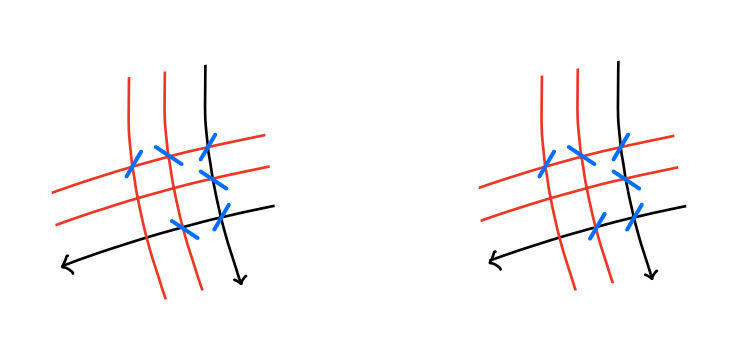}
        \caption{Two choices of smoothings on $c_{10}$.}
        \label{choice}
    \end{figure}
    \FloatBarrier

    Indeed, repeating the process for the remaining crossings $c_{11}$, $c_{2,1}$, and $c_{2,0}$, in that order, we see that the choice of smoothing on $c_{10}$ force these crossings to be smoothed in a particular way. See Figure \ref{rigid4} for the case when $c_{10}$ is assumed to be smoothed as in the left of Figure \ref{choice}.

    \begin{figure}[!htbp]
        \centering
        \includegraphics[width=0.65\textwidth]{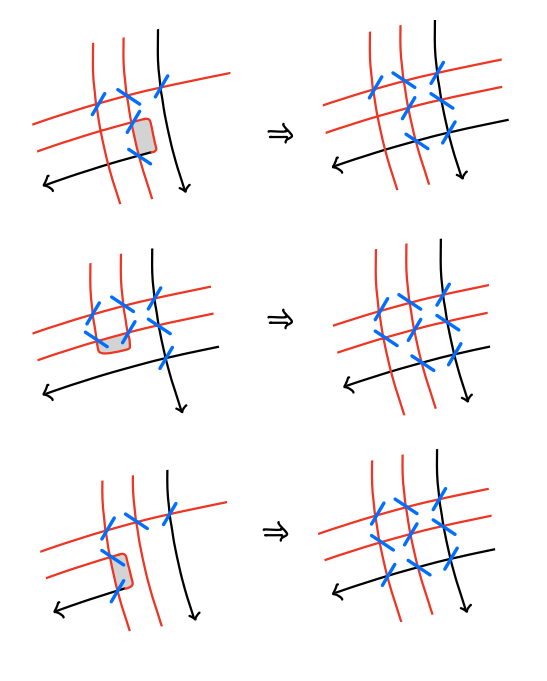}
        \caption{$c_{11}$, $c_{21}$, and $c_{20}$ are forced to be smoothed as in the right.}
        \label{rigid4}
    \end{figure}
    \FloatBarrier

    The case when $c_{00}$ is smoothed in the opposite manner is handled similarly.
\end{proof}

In fact, if one accounts for the smoothing data of \textit{two} adjacent crossings in $a$, we can further cut down the two possibilities to a single case. This will arise as a byproduct of the proof of the following lemmas. Before stating them, we introduce some notations.

\begin{figure}[!htbp]
    \centering
    \includegraphics[width=0.65\textwidth]{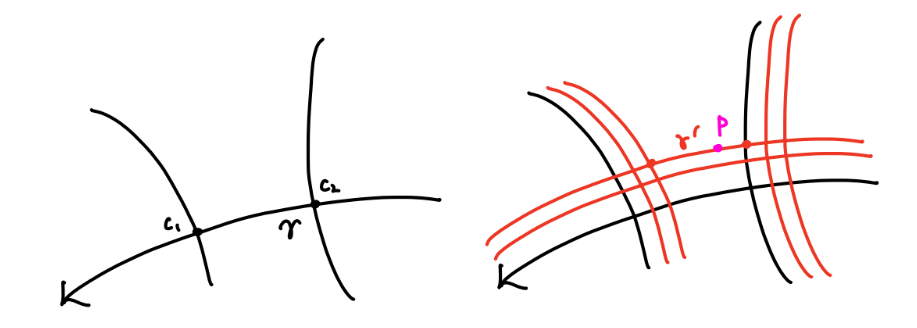}
    \caption{Two adjacent crossings in $b_i$.}
    \label{gamma}
\end{figure}
\FloatBarrier

Let $c_1$ and $c_2$ be two adjacent crossings in $K$. Denote by $\gamma$ the strand in $a$ between $c_1$ and $c_2$. Let $b_i$ be a generator summand of $CKh(E)(a)$. The portion of the tentacle along $\gamma$ is split into two strands, either by another tentacle, or a strand in $K$. Denote by $\gamma'$ the strand further away from $\gamma$. Note that $\gamma'$ is dependent on the summand $b_i$. Choose a point $P$ in the intersection of all $\gamma'$ as $b_i$ ranges over the generator summands of $CKh(E)(a)$. Clearly this intersection is nonempty. 

We need the following definitions for later use.

\begin{defn}\label{defi}
    Let $E= E''\circ E'$, where $E'$ consists of the first $N$ Reidemeister II moves of $E$, and $E''$ the remaining Reidemeister II moves (in the canonical movie decomposition). Let $b'$ be a generator summand of $Kh(E')$, and let $Q$ be a point on the smoothing underlying $b'$ that lies on a strand $\gamma$ of $K$ bounded by two crossings $c_1$ and $c_2$ of $K$. Let $\gamma_1$ and $\gamma_2$ be the strands of $K$ forming $c_1$ and $c_2$ together with $\gamma$, respectively.

    We say that $E'$ has traversed \textbf{over} $Q$ \textbf{through} $c_i$, if a tentacle of $E'$ has crossed $c_i$ along $\gamma_i$. We say that $E'$ has traversed \textbf{along} $Q$ \textbf{through} $c_i$, if a tentacle of $E'$ has crossed $c_i$ along $\gamma$.
\end{defn}

    \begin{figure}[!htbp]
    \centering
    \includegraphics[width=0.65\textwidth]{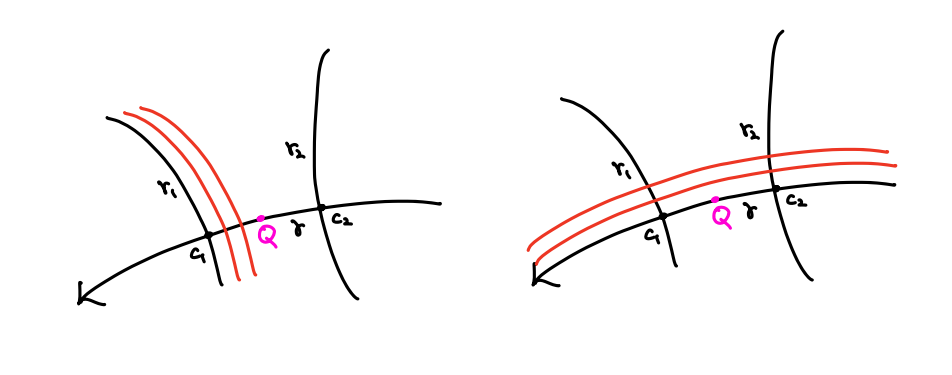}
    \caption{$E'$ traverses over $Q$ through $c_1$ on the left, while $E'$ traverses along $Q$ through $c_1$ and $c_2$.}
    \label{overalong}
    \end{figure}

Note that our minimality assumption on the diagram of $K$ guarantees that $\gamma_1$, $\gamma_2$, and $\gamma$ are all distinct.

Our goal is to show that a generator summand of $CKh(E)(a)$, not killed by $CKh(S\circ H')$, must have the same label at $P$ as the component of $a$ containing $\gamma$. This is shown by repeatedly applying the following technical lemmas.

\begin{lemma}\label{kill1}
    Decompose $E=E''\circ E'$. Let $b'$ be a generator summand of $CKh(E')(a)$. If $b'$ has a component labeled $1$ that lies between a strand of $K$ and the lower strand of a tentacle, then $CKh(S\circ H'\circ E'')(b')=0$.
\end{lemma}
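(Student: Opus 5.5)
The plan is to follow the region through the rest of the movie. Let $\Delta$ be the region of the smoothing underlying $b'$, bounded by a strand of $K$ and the lower strand of a tentacle, that carries the component labeled $1$. We then show that every later step kills the labeling on $\Delta$ or keeps it in a form that is eventually killed.

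First we record how the relevant chain maps act on a small circle labeled $1$. The key input is the explicit description of $\rho$ in Figure \ref{reidemeister}. When a Reidemeister II move removes two crossings, the generators that survive are the ones whose local smoothing is the ``straight'' one, or the ``turn-back'' one carrying a small circle labeled $x$. A small circle between the two strands that is labeled $1$ is sent to zero; this is the same mechanism that produced the killing in Figure \ref{rigid1}. Dually, $\rho'$ in Figure \ref{reidemeister-1} only ever creates such inner circles with label $x$, or as a combination in which the inner circle is labeled $x$. So the proof is an argument about what happens to $\Delta$ over time.

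The steps are as follows. (1) Under the remaining moves of $E''$, each Reidemeister II move either happens away from $\Delta$ and keeps its label, or happens next to $\Delta$. In the second case the lower tentacle strand is pushed further along $K$, and we check from Figure \ref{reidemeister-1} that every surviving summand still has a component labeled $1$ between a strand of $K$ and a lower tentacle strand. This may be $\Delta$ itself, or the component it merges into, since multiplication by $1$ is the identity, so the label $1$ persists. This is an induction on the number of moves in $E''$. (2) The $1$-handle $H'$ acts at the tip of the tentacle and attaches to $\bar K$. It is far from any region between $K$ and the lower strand, unless $\Delta$ is the region containing the tip. In that case the merge map $m(1\otimes y)=y$ and the split map $\Delta(1)=1\otimes x+x\otimes 1$ still leave, in every summand, a label $1$ on a region between $K$ and the lower tentacle strand. (3) In $S$, the base of the tentacle sweeps along $K$ and undoes every Reidemeister II pair in the region between the tentacle and $K$. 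When the move that contains the relevant region is applied, the circle between the strand of $K$ and the lower tentacle strand is exactly the inner circle of a $\rho$-move, and it is labeled $1$. By Figure \ref{reidemeister}, and as in Figure \ref{rigid1}, this summand goes to zero. So $CKh(S\circ H'\circ E'')(b')=0$.

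The main obstacle is step (1). We must make sure that a later move of $E''$ cannot turn the labeled-$1$ region into something that $S$ no longer kills. For example, a second tentacle pass could split $\Delta$ so that the label $1$ moves onto a region bounded by an upper tentacle strand. To handle this, we would use the structure of the canonical movie: the tentacle always moves forward along $K$, and by the minimality assumption the strands $\gamma_1,\gamma_2,\gamma$ are distinct. From these we would argue that a lower strand stays a lower strand. Then we would check the finitely many local configurations, over or along in the sense of Definition \ref{defi}, one by one, using the rigidity from Lemma \ref{rigiditylemma}.
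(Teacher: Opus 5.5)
Your overall strategy is the same as the paper's: follow the $1$-labeled component (call it $C$) forward, and let an RII move of $S$ kill it as a bigon circle labeled $1$. However, step (1), which carries all the content, is not proved, and the justification you give for it is false.

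You say the label $1$ survives a merge ``since multiplication by $1$ is the identity''. But $m(1\otimes y)=y$, so if $C$ merges with an $x$-labeled component, the result is labeled $x$ and the $1$ is gone. Merges are exactly where your invariant could fail, and the same error appears in step (2). Your description of $\rho'$ is also backwards. The correction term of $\rho'$ lies in the same homological degree as the identity term and involves a saddle, which lowers $|a_\sigma|$ by one. So for $\rho'$ to preserve $\text{gr}_q$, the newly created bigon circle must be labeled $1$, not $x$. This is what Lemma \ref{birth1} records for circles encased in a tentacle.

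The paper's proof instead rests on locality plus an explicit local computation. Since $C$ never leaves a small local picture, the moves of $E''$ elsewhere and the handle $H'$ do not touch it, so it stays labeled $1$ until the tentacle returns through that local picture. That return splits $C$ into two components $C_1,C_2$, still confined to the local picture. The summands of $CKh(E'')(b')$ carry labels $(1,x)$ or $(x,1)$ on these, as dictated by $\Delta(1)=1\otimes x+x\otimes 1$, so this is a split, not a merge. A direct check of one RII move of $S$ inside the local picture then shows that both kinds of summands die.

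This is precisely the danger you flag in your final paragraph (the $1$ migrating to a region bounded by an upper tentacle strand), and your proposal leaves it as ``we would check''. Step (3) needs the same kind of verification. You must show that when $S$ arrives, the $1$-labeled piece really is the bigon circle of one of its RII moves, i.e.\ that the preceding moves of $S$ have not altered it; asserting this is not enough.
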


\begin{proof}
    We handle one case. Suppose $b'$ has a component $C$ labeled $1$ configured as in the upper left corner of Figure \ref{kill1lemma}. Note that $C$ does not leave the local picture, so we may assume that it stays labeled $1$ until the tentacle returns to the local picture. After a sequence of Reidemeister II moves in $E''$, the tentacle leaves the local picture, splitting $C$ into two components $C_1$ and $C_2$. 
    
    The generator summands of $CKh(E'')(b')$ can be grouped into two categories depending on how $C_1$ and $C_2$ are labeled. On one hand, we have summands whose $C_1$ and $C_2$ components are labeled $1$ and $x$ respectively (lower diagram in the upper right corner of Figure \ref{kill1lemma}), and on the other hand we have summands labeled $x$ and $1$ respectively (upper diagram in the upper right corner of Figure \ref{kill1lemma}). Let $b_1'$, $b_2'$ be generator summands of $CKh(E'')(b')$ in the first and second categories respectively.

    Since $C_1$ and $C_2$ does not leave the local picture of Figure \ref{kill1lemma}, in computing the effect of $CKh(S\circ H')$ on $b_1'$ and $b_2'$, we need only study the effect of a Reidemeister II move of $S$ occurring within the local picture. This is computed in the remainder of Figure \ref{kill1lemma}, revealing that both $CKh(S\circ H')(b_1')$ and $CKh(S\circ H')(b_2')$ are zero.

    The remaining cases (depending on the orientation and ordering of the strands in the local pictures) are handled analogously.
\end{proof}

    \begin{figure}[!htbp]
    \centering
    \includegraphics[width=0.85\textwidth]{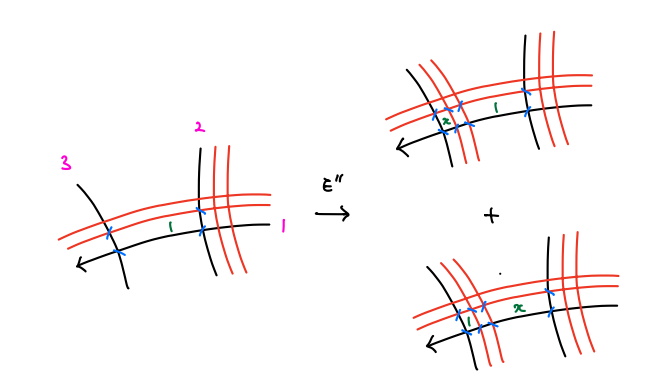}
    \end{figure}
    \FloatBarrier

    \begin{figure}[!htbp]
    \centering
    \includegraphics[width=0.85\textwidth]{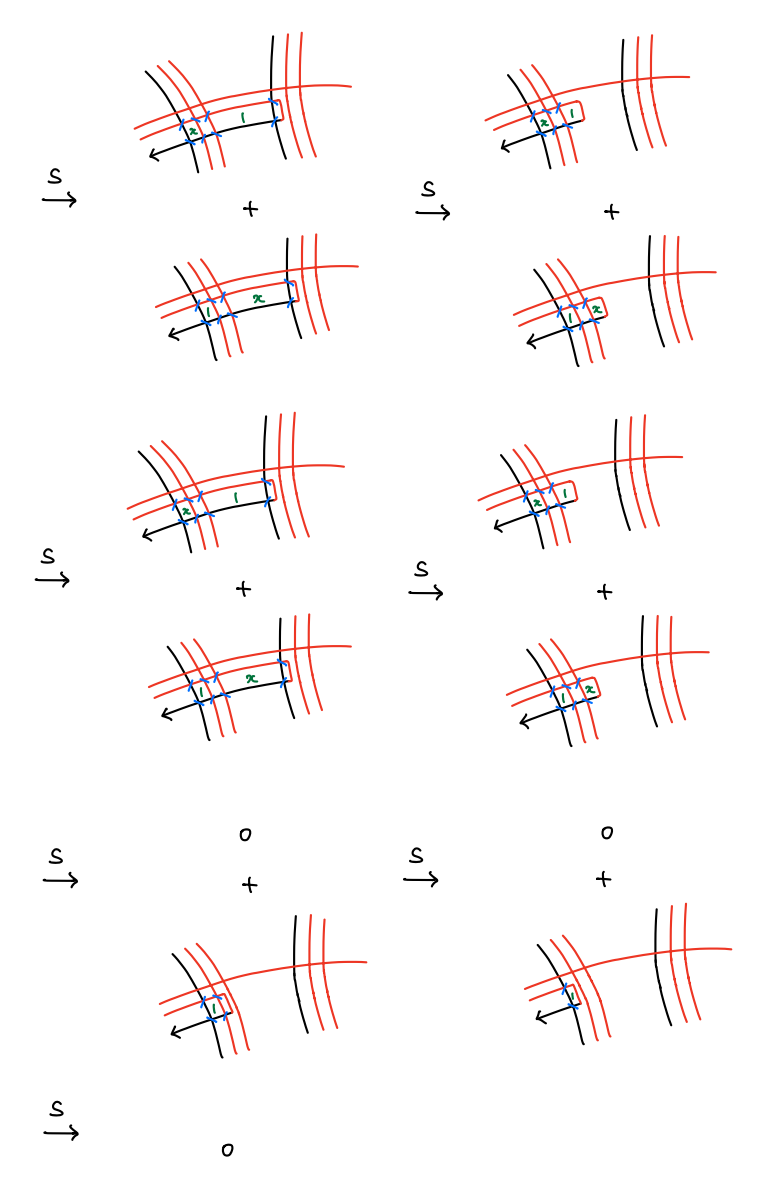}
    \caption{Verification of Lemma \ref{kill1} in the case when the strands are oriented and ordered as in the top left corner.}
    \label{kill1lemma}
    \end{figure}
    \FloatBarrier

\begin{lemma}\label{birth1}
     Decompose $E=E''\circ E'$, and let $b'$ be a generator summand of $Kh(E')(a)$. Then a component of $b'$ completely encased between the two strands of a tentacle is labeled $1$. 
\end{lemma}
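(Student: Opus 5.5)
Since $E'$ is a movie made only of Reidemeister II moves applied to $a$, I will prove the statement by induction on the number $N$ of moves in $E'$. The base case $N=0$ holds vacuously. The diagram of $K\sqcup\bar K$ has no tentacles, so no component is encased between tentacle strands.

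For the inductive step, write $E'=\rho_N\circ E'_{N-1}$, where $\rho_N$ is a single Reidemeister II move (direction $\rho'$ or $\rho$). Let $b'$ be a summand of $CKh(\rho_N)(b'')$ for some summand $b''$ of $CKh(E'_{N-1})(a)$. By the induction hypothesis, every component of $b''$ encased between the two strands of a tentacle is labeled $1$.

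Components of $b'$ not meeting the local picture of $\rho_N$ keep their labels. Their encasement status also does not change, except possibly in the following way.

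First, a region between two tentacle strands can only be newly created or modified near the tip of the tentacle. This happens when the tentacle pushes across a strand of $K$ (a move of type $\rho'$) or across a strand of its own earlier part.

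Second, I will read off from the description of $\rho'$ in Figure \ref{reidemeister-1} which new smoothing near the local picture produces a closed circle in the middle. The key observation is that $\rho'$ creates a small circle between the two new crossings only in the resolution where that circle is born, and the births in $\rho'$ label it $1$ (the map is of the form $v\mapsto v\otimes 1$ plus correction terms). The other resolution merges the region into the surrounding arcs, so no new encased component arises.

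Third, for $\rho$ (resolving two crossings), I check from Figure \ref{reidemeister} that no new encased component is created. Any circle that is destroyed was $1$-labeled and is removed by the counit-type piece. Surviving encased components keep their labels.

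The main obstacle is the case where an existing encased component is enlarged or merged by the move rather than freshly born. For example, the tentacle tip may pass under a strand of $K$ and thereby join the encased region with a small new region. Here I would examine each local configuration, organized by orientation and ordering as in Lemma \ref{kill1}.

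In that case, the merging occurs through the multiplication $1\cdot 1=1$, so the label stays $1$. The only way to obtain an $x$ would be to merge with a component labeled $x$ lying outside the tentacle. Such a component would have to lie between a strand of $K$ and the lower tentacle strand. Then the configuration of Lemma \ref{kill1} applies (in its $x$-labelled variant), or the minimality assumption rules out the configuration, since $\gamma,\gamma_1,\gamma_2$ are distinct.

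I expect this case check, guided by the explicit pictures of $\rho$ and $\rho'$, to be the bulk of the work. The argument proceeds analogously for the remaining orientations.
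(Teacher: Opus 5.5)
\textbf{There is a genuine gap: the circle-creating term of $\rho'$ can split an encased component and put an $x$ on one of the pieces, and your proposal never considers this.}

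Your induction over the moves of $E$ is the same local strategy as the paper's figure check. The tentacle-pushing moves of $E$ are all of type $\rho'$. The problem is in how you describe that move. Its circle-creating term is not ``$v\otimes 1$''. It is $\pm\,\sigma(v)\otimes 1$: the new bigon circle is labeled $1$, but $\sigma$ is the saddle that joins the tip arc to the crossed strand, acting on the \emph{old} components. You treat $\sigma$ only as a merge. It is a split whenever the tip arc and the crossed strand already lie on one component. Then $\Delta(1)=1\otimes x+x\otimes 1$ (or $\Delta(x)=x\otimes x$) labels one piece $x$ in every summand.

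This does happen with encased pieces. Let the tip cross an earlier strip whose two strands lie on one encased loop $C$ labeled $1$, for instance a loop closed off at both ends inside the strip. Take the crossing of the first strip strand in the resolution isotopic to the old diagram, so the tip arc becomes part of $C$. Now cross the second strand in the circle-creating resolution. This splits $C$ into two loops, both still lying between the strip's strands, and one of them is labeled $x$. So the induction ``births give $1$, merges give $1\cdot1=1$'' does not close. You must either rule such splits out geometrically, or prove the lemma only for summands not killed by $CKh(S\circ H'\circ E'')$. That restricted form is the only one used in the proof of Theorem \ref{mainresult}, but you would then have to show that these split summands die.

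Your treatment of what you call the main obstacle does not fix this, and is itself flawed:
\begin{itemize}
\item Merging an encased component with a non-encased $x$-labeled one keeps the outside arcs away from the move. The result is therefore not encased, so that case is vacuous.
\item There is no ``$x$-labelled variant'' of Lemma \ref{kill1}. It would contradict the proof of Theorem \ref{mainresult}, where the surviving components between $K$ and the lower strand are exactly the $x$-labeled ones.
\item Lemma \ref{kill1} only shows that a summand dies later. It cannot establish a property of every summand of $Kh(E')(a)$.
\item The minimality of the diagram plays no role here.
\end{itemize}

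Finally, your description of $\rho$ is inverted. The summand whose small circle is labeled $1$ goes to $0$, and the one labeled $x$ survives through the counit. This is only a side issue, since $E$ uses only $\rho'$.
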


\begin{proof}
    We handle one case. Clearly we need only consider local pictures of $b'$ as in the top left corner of Figure \ref{birth1lemma}, and show that any component $C$ arising from further Reidemeister II moves that is completely encased within the two strands of a tentacle must be labeled $1$. This is shown in Figures \ref{birth1lemma}.

    The remaining cases (depending on the orientation and ordering of the strands in the local pictures) are handled analogously.
\end{proof}

    \begin{figure}[!htbp]
    \centering
    \includegraphics[width=0.5\textwidth]{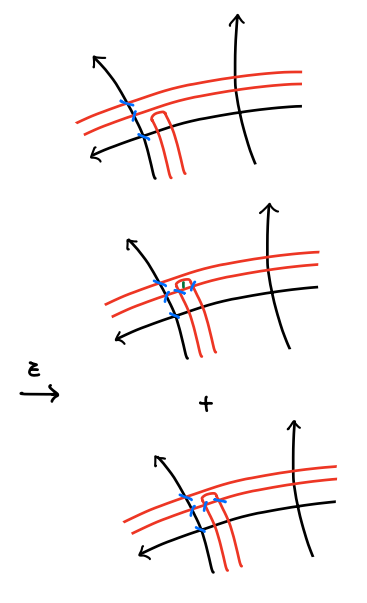}
    \label{birth1lemma}
    \end{figure}
    \FloatBarrier

    \begin{figure}[!htbp]
    \centering
    \includegraphics[width=0.85\textwidth]{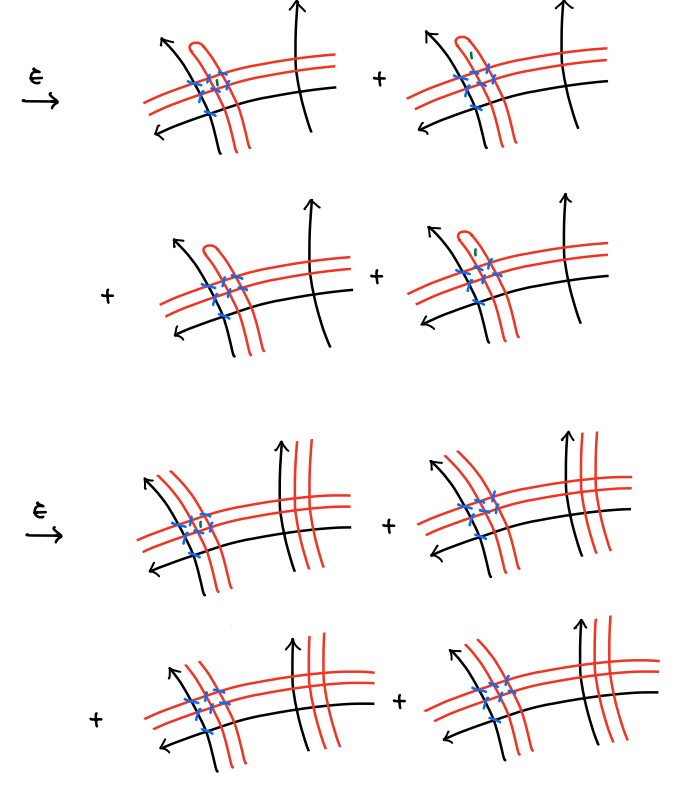}
    \caption{Verification of Lemma \ref{birth1} in the case when the strands are oriented and ordered as in the top left corner.}
    \label{birth1lemma}
    \end{figure}
    \FloatBarrier

Iterating the above two principles yields the following technical results. We defer the proofs to the appendix.

\begin{prop}\label{localElemma}
    Suppose that the component of $a$ containing $\gamma$ is labeled $1$. Let $b_i$ be a generator summand of $Kh(E)(a)$ not mapped to zero under $Kh(S)Kh(H')$. Then the component of $b_i$ containing $P$ is also labeled $1$.
\end{prop}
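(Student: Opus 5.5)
We argue by induction along the canonical movie decomposition of $E$, tracking the label carried by the region of the diagram near $\gamma$ as the tentacle sweeps past. Write $E = E''\circ E'$ with $E'$ consisting of the first $N$ Reidemeister II moves. The inductive claim for each $N$ concerns every summand $b'$ of $CKh(E')(a)$ that is not killed by $CKh(S\circ H'\circ E'')$. For such $b'$, the component containing the point of $\gamma$ (or of the current outermost tentacle strand running alongside $\gamma$) that eventually becomes $P$ is labeled $1$. For $N=0$ this is the hypothesis on $a$. For $N$ equal to the full length of $E$ it is the statement of the proposition, since the component through $P$ then lies on $\gamma'$.

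For the inductive step we look at the $(N+1)$-st Reidemeister II move and split into cases according to Definition~\ref{defi}.

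\emph{Move far from $\gamma$.} If the move happens away from the local picture around $\gamma$, $c_1$, $c_2$, then the components near $\gamma$ are either untouched, or merged with or split from distant components. A merge via $\rho'$ uses multiplication, and the product of $1$ with a label $y$ is $y$. So the label $1$ can only become $x$ if the distant component carries $x$. We must rule this out, or show that such summands die.

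\emph{Tentacle traverses over $Q$ through $c_i$.} Here the tentacle crosses $\gamma_i$ and pinches off a region between $\gamma$ and the lower strand of the tentacle. If that region carries label $1$, the summand is killed by Lemma~\ref{kill1}. Otherwise the relevant surviving smoothing is forced exactly as in the proof of Lemma~\ref{rigiditylemma}. In those forced smoothings the component through $Q$ either keeps its label or becomes encased between the two tentacle strands, and in the latter case Lemma~\ref{birth1} gives it label $1$.

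\emph{Tentacle traverses along $Q$ through $c_i$.} The tentacle runs parallel to $\gamma$. Its outer strand $\gamma'$ arises from the comultiplication in $\rho'$, which produces the two terms $1\otimes x$ and $x\otimes 1$ (or $x\otimes x$ from an $x$-label). The term in which the outer strand is $x$ and the inner region is $1$ is exactly the configuration of Lemma~\ref{kill1}, so it vanishes. Hence surviving summands have $1$ on the strand carrying $P$. The rigidity of Lemma~\ref{rigiditylemma} ensures that only the two local smoothings need checking.

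The main obstacle is the bookkeeping in the far-move and merge cases. Components containing $\gamma$ are global, and a merge elsewhere could a priori import an $x$ label. My plan is to show that every merge occurring in a surviving summand joins the $\gamma$-component only with components that are themselves encased by tentacle strands. Such components are labeled $1$ by Lemma~\ref{birth1}, so the product stays $1$. Checking this requires the minimality assumption, which makes $\gamma$, $\gamma_1$, $\gamma_2$ distinct and so prevents the tentacle from re-entering the local picture in an unexpected way. It also requires a case analysis over the orientations and orderings of the strands at $c_1$ and $c_2$, as in Figure~\ref{rigidity0}. I expect to organize this as a finite table of local pictures verified by direct computation with $\rho$ and $\rho'$.
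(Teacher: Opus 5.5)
Your overall architecture matches the paper's. You induct along the movie of $E$, discard summands with Lemma~\ref{kill1}, label encased circles by Lemma~\ref{birth1}, and use Lemma~\ref{rigiditylemma} to limit local smoothings. The paper likewise writes $E=E_3\circ\rho_3\circ E_2\circ\rho_2\circ E_1\circ\rho_1\circ E_0$ and checks the three local passages by explicit pictures.

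The gap is the far-move step, which you leave as a plan resting on an incomplete reduction. You assert that the label at $Q$ can only become $x$ through a merge with an $x$-labelled distant component. But the only connectivity-changing term of a far $\rho'$ is a saddle between the crossed strand $\delta$ and the component of the tentacle tip, together with a newborn circle labelled $1$. The tip lies on the circle born at its previous Reidemeister II move only if that move was resolved in the resolution containing the small circle. In the other resolution the tip lies on the component of the previously crossed strand. That component is not encased, and it can coincide with the component of $\delta$. In that case the saddle is a split, and $\Delta(1)=1\otimes x+x\otimes 1$ puts $x$ on $Q$'s side with no merge at all. Your claim that surviving summands only merge $Q$'s component with encased circles is therefore exactly what needs proof, and nothing in Lemmas~\ref{kill1}, \ref{birth1} or \ref{rigiditylemma} gives it.

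The missing ingredient is the paper's Lemma~\ref{farEmovelemma1}. It follows a complete passage of the tentacle over a distant crossing through which $Q$'s component runs. There are six cases when that component runs along a strand of $K$ (resolution of the crossing times orientation of the other strand) and three when it runs along a tentacle strand. In each, every summand in which the label at $Q$ has changed contains a $1$-labelled component between a strand of $K$ and a lower tentacle strand, so it is killed by Lemma~\ref{kill1}. The lemma then inducts over the crossings met by $Q$'s component. Bad terms are produced and then killed, not prevented.

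Your local cases are also too coarse:
\begin{itemize}
\item In the along case the tentacle runs parallel to $\gamma$ and never crosses it. Its Reidemeister II moves are with $\gamma_1$, $\gamma_2$ and earlier tentacle strands, so the label reaching $\gamma'$ depends on how $c_1,c_2$ join $\gamma$ to pieces of $\gamma_i$. That is why the paper splits into the four resolution cases of Figure~\ref{localElemma1-4} and two traversal orders, rather than using a single comultiplication of $\gamma$'s label.
\item The last visit to the local picture must also cross the earlier tentacle. The paper handles it by rerunning the analysis with the outer strand of that tentacle in the role of $\gamma$.
\item In the over case the tentacle runs along $\gamma_i$ and crosses $\gamma$, not $\gamma_i$.
\end{itemize}
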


\begin{lemma}\label{farEmovelemma1}
    Let $E = E''\circ E'$, $b'$, and $Q$ be in Definition \ref{defi}. Suppose $E'$ has not traversed along $Q$. Decompose $E''$ into $E_2''\circ E_1''$, where $E_1''$ is maximal among summands of $E''$ consisting only of moves that do not traverse along $Q$. Suppose the component of $b'$ containing $Q$ is labeled $1$. Then the component of $Kh(E_1'')(b')$ containing $Q$ is also labeled $1$.
\end{lemma}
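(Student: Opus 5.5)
We argue by induction on the number of Reidemeister II moves making up $E_1''$, writing $E_1'' = m_k\circ\cdots\circ m_1$ in the canonical movie order. The claim to propagate is slightly stronger than the statement. For each $j$, every generator summand of $CKh(m_j\circ\cdots\circ m_1)(b')$ either has the component through $Q$ labeled $1$, or is killed by $CKh(S\circ H'\circ E_2'')$. Only the first kind of summand matters for the eventual computation, and that is also how Proposition \ref{localElemma} is phrased. So we read ``the component of $Kh(E_1'')(b')$ containing $Q$'' as referring to the summands that survive. The base case $j=0$ is the hypothesis on $b'$.

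For the inductive step, let $b''$ be a surviving summand after $j$ moves, with the component $C\ni Q$ labeled $1$, and apply $m_{j+1}$. Since $m_{j+1}$ does not traverse along $Q$, either it happens away from $C$, or it involves a strand of $C$ only locally near a crossing $c_i$, with the tentacle moving over $Q$ through $c_i$ (along $\gamma_i$) or somewhere else on $C$. If $m_{j+1}$ does not meet $C$, then the maps $\rho,\rho'$ leave the labels of components outside the local picture unchanged, so $C$ keeps its label $1$. Otherwise we use the explicit descriptions of $\rho$ and $\rho'$ in Figures \ref{reidemeister} and \ref{reidemeister-1}. A reverse move $\rho'$ that creates two crossings next to $C$ can merge $C$ with a component $C'$, or split $C$ into two components. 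If it merges, the label of the merged component is the product of the labels. By Lemma \ref{birth1}, any component encased in a tentacle is labeled $1$, and $1\cdot 1=1$, so the result is again labeled $1$. If it splits, the comultiplication $\Delta(1)=1\otimes x + x\otimes 1$ produces two summands.
\begin{itemize}
\item In the summand where the piece containing $Q$ is labeled $1$, the claim holds directly.
\item In the summand where the piece containing $Q$ is labeled $x$, the other piece, labeled $1$, is the one lying between a strand of $K$ and the lower strand of the tentacle. Lemma \ref{kill1}, applied with $E'$ replaced by $m_{j+1}\circ\cdots\circ m_1\circ E'$, then shows that this summand is killed by $CKh(S\circ H'\circ E'')$.
\end{itemize}
Here the minimality assumption, which makes $\gamma,\gamma_1,\gamma_2$ distinct, ensures that $Q$ stays on the far side of the tentacle that passes over it.

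The main obstacle is the geometric case check that in the splitting case the piece labeled $1$ really is the one caught between $K$ and the lower strand of the tentacle. This depends on the orientation and ordering of the strands at $c_i$, as in Figure \ref{rigidity0}, and on whether the tentacle passes over or under. I would first treat the configuration of Figure \ref{kill1lemma}, then handle the other orientations by symmetry, mirroring the case analysis in Lemmas \ref{kill1} and \ref{birth1}. If that fails for some configuration, I would fall back on Lemma \ref{rigiditylemma}: it restricts the smoothings at the new crossings to the two allowed patterns, and each of those patterns should force a merge rather than a split near $Q$.
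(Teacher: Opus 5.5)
\textbf{There is a genuine gap in the inductive step.} Your skeleton matches the paper's: induct over the moves of $E_1''$, keep only summands that survive the rest of the cobordism, and discard the others with Lemma \ref{kill1}. But the inductive step relies on a general merge/split principle that does not hold as stated.

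First, $\rho'$ for a crossing-creating Reidemeister II move is not a single saddle. It is a sum of two terms. One is an identity-type term into the resolution that looks like the old diagram, with labels unchanged. The other, with a sign, performs a saddle between the two arcs and creates a small circle in the bigon. That new circle is the only component here that is encased in the tentacle, and so the only one Lemma \ref{birth1} controls.

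In your merge case, the component $C'$ that the saddle joins to $C$ is the component through the tip of the tentacle. It is made of the tentacle's own strands, joined to other arcs by earlier resolutions. It is not a component completely encased between the two strands of a tentacle, so Lemma \ref{birth1} says nothing about it. It can be labeled $x$, and then $Q$ gets $m(1\otimes x)=x$. These are exactly the summands that must be shown to die under the remaining moves followed by $S\circ H'$, and your argument does not show this.

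In the split case, the claim that the $1$-labeled piece not containing $Q$ lies between a strand of $K$ and the lower strand of a tentacle is essentially the whole content of the lemma. You leave it unverified and call it the main obstacle yourself. The fallback to Lemma \ref{rigiditylemma} does not fill this. That lemma only restricts which smoothings of the crossings near a crossing of $K$ can survive. Turning it into \emph{merge rather than split near $Q$} is the same unchecked step.

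You also do not separate two situations: $C$ reaching the crossing along a strand of $K$, and $C$ running along the upper or lower strand of an earlier portion of the tentacle, which happens when the tentacle crosses itself.

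The paper proves the lemma by exactly this local case check:
\begin{itemize}
\item Six cases when the strand lies on $K$: two resolutions of the crossing times three ways the tentacle can arrive. The tentacle along the horizontal strand is handled by rotating the computations from Proposition \ref{localElemma}.
\item Three cases when the strand lies on a tentacle.
\end{itemize}
In each case $\rho'$ is computed explicitly and summands are struck out via Lemma \ref{kill1}. In one tentacle case (Figure \ref{farElemmaR2}), discarding a summand needs a separate justification beyond a direct use of Lemma \ref{kill1}. Without those computations, or a genuinely general replacement for them, the inductive step is not established.
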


We also need analogous lemmas for $x$ in place of $1$. The proofs are completely analogous.

\begin{prop}\label{localElemmax}
    Suppose that the component of $a$ containing $\gamma$ is labelled $x$. Let $b'$ be a generator summand of $Kh(E)(a)$ not mapped to zero under $Kh(S)Kh(H')$. Then the component of $b'$ containing $P$ is also labelled $x$.
\end{prop}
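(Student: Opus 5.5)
We mirror the argument for Proposition \ref{localElemma}, exchanging the roles of the labels $1$ and $x$ and replacing the two ``killing'' principles by their $x$-analogues. We follow the component of $\gamma$ move by move through the canonical movie $E$. For each $N$ we write $E=E''\circ E'$ with $E'$ the first $N$ Reidemeister II moves. We prove by induction on $N$ that every generator summand $b'$ of $CKh(E')(a)$ with $CKh(S\circ H'\circ E'')(b')\neq 0$ carries the label $x$ on the component containing $P$. Here $P$ is understood through the strand of $b'$ that will eventually become $\gamma'$. At $N=0$ this is the hypothesis on $a$.

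\textbf{Step 1: moves not traversing along $P$.} Consider a maximal block of moves that do not traverse along the point in question, in the sense of Definition \ref{defi}. Such moves only push tentacle strands over $\gamma$ through $c_1$ or $c_2$, or act far away. We first establish the $x$-version of Lemma \ref{farEmovelemma1}: if the component containing $Q$ is labeled $x$ before such a block, it is labeled $x$ after it. By the Reidemeister II maps $\rho,\rho'$, a move whose local picture meets the component of $Q$ either merges or splits it. A split produces components labeled $(x,x)$, so the label stays $x$. The remaining case is when the multiplication rule would create a new component labeled $1$ sitting between $\gamma$ and the lower strand of a tentacle. Such summands are discarded by Lemma \ref{kill1}. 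Components encased inside a tentacle are labeled $1$ by Lemma \ref{birth1}. Therefore merging with them multiplies by $1$, and the label $x$ survives.

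\textbf{Step 2: moves traversing along $P$.} Next we treat the moves where a tentacle crosses $c_1$ or $c_2$ along $\gamma$. Here Lemma \ref{rigiditylemma} applies: only two smoothings of the crossings near $c_i$ survive $CKh(S\circ H')$. For each of the two, we check directly with $\rho'$ which component inherits the old label. We verify that the strand of $\gamma$ farther from $K$, the future $\gamma'$, lies on the component that carries the label $x$. The other possibility is a component labeled $1$ lying between a strand of $K$ and a lower tentacle strand, and that summand is killed by Lemma \ref{kill1}. We alternate Steps 1 and 2 until $E$ is exhausted and $\gamma'$ has been fully formed, which gives the claim at $P$.

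The main obstacle is Step 2. We must make sure that the rigidity of Lemma \ref{rigiditylemma} is compatible with label propagation for \emph{both} surviving smoothings and for all orientations and orderings of the strands at $c_i$. The $x$ case is slightly asymmetric to the $1$ case because comultiplication of $x$ is $x\otimes x$, whereas comultiplication of $1$ produces $1\otimes x + x\otimes 1$. We expect the $x$ case to be easier in splits and harder in merges. In merges, the $x$ label can be annihilated ($x\cdot x=0$), so the surviving summands are exactly those where the merged partner is a tentacle-encased $1$-component. We would check this case by case on the local pictures, following Figures \ref{rigid2}--\ref{rigid4}.
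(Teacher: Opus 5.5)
\paragraph{Verdict.} There is a genuine gap in Step 2, and it breaks your inductive invariant.

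\paragraph{Where Step 2 fails.} Consider the move where the tip passes through $c_1$ along $\gamma$, with $\gamma_1$ not yet traversed. There you do not get two surviving smoothings. As in the proof of Lemma \ref{rigiditylemma}, the smoothing of $c_1$ alone forces either the identity term of $\rho'$ or the term containing the newborn circle.

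In the second case, the newborn circle is the boundary of the bigon cut off by $\gamma_1$. As the tip slides along $\gamma$, this circle consists of a short segment of $\gamma_1$ together with \emph{both} tentacle strands along $\gamma$. So $P$ lies on it, and its label is $1$ no matter how $\gamma$ is labelled.

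Suppose moreover that $\gamma_2$ has not yet been traversed and the smoothing of $c_2$ forces the identity term at the exit move. Then after the moves along $\gamma$, the point $P$ lies on a rectangle encased between the two tentacle strands and labelled $1$. This is the situation of Lemma \ref{birth1}, not of Lemma \ref{kill1}, so nothing discards it. Your invariant ``every summand of $CKh(E')(a)$ not killed by $CKh(S\circ H'\circ E'')$ has $x$ at $P$'' therefore fails at this stage.

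\paragraph{Why Step 1 cannot repair it.} Away from $P$, $\rho'$ acts on labels only through the identity, a saddle, and the birth of a $1$-labelled circle. Since $m(x\otimes y)\in\{x,0\}$ and $\Delta(x)=x\otimes x$, an existing $x$ at $P$ is preserved automatically. So Step 1 is true, by a simpler argument than yours. But an $x$ can appear at $P$ only through a later circle term whose saddle merges $P$'s component with an $x$-labelled component.

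\paragraph{What the paper does instead.} That merge is exactly what the paper's computations for $\rho_2,\rho_3$ are about. After $\rho_1$, the paper only records the shape of the component through $P$ (Figure \ref{localElemmax1-4}). It then analyses the moves in which the tentacles along the vertical strands pass over $\gamma$ and the two tentacle strands along it.

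This is also where the binary choice of Lemma \ref{rigiditylemma} actually lives. The crossings $c_{10},c_{11},c_{20},c_{21}$ are created at those moves, not at the moves along $\gamma$. One branch is a circle term merging the encased $1$-circle with the $x$-labelled component of $\gamma$. The other is a split whose wrong summands are struck out.

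Your proposal files precisely these over-moves under Step 1 as mere label preservation. The missing ingredient is their case-by-case analysis, with an invariant that allows $P$ to sit on the encased $1$-circle until that merge happens.

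\paragraph{A further case to check.} You should also treat separately the orientations in which neither vertical tentacle passes over $\gamma$; the paper dismisses these as having `nothing to prove'. There the encased circle is not touched again by any move of $E$. So it is not clear that the label at $P$ in $b'$ is $x$ in that sub-case at all.
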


\begin{lemma}\label{farEmovelemmax}
    Let $E = E''\circ E'$, $b'$, and $Q$ be as in Lemma \ref{farEmovelemma1}. Suppose $E'$ has not traversed along $Q$. Decompose $E''$ into $E_2''\circ E_1''$, where $E_1''$ is maximal among summands consisting only of moves that do not traverse along $Q$. Suppose the component of $b'$ containing $Q$ is labeled $x$. Then the component of $Kh(E_1'')(b')$ containing is also labeled $x$.
\end{lemma}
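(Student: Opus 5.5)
The statement to prove is Lemma~\ref{farEmovelemmax}, the $x$-analogue of Lemma~\ref{farEmovelemma1}. I would follow the proof of Lemma~\ref{farEmovelemma1}. The argument is an induction on the number of Reidemeister II moves in $E_1''$. It tracks how the component through $Q$ changes, and it uses the fact that none of these moves traverses along $Q$.

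Write $E_1'' = m_k\circ\cdots\circ m_1$, where each $m_j$ is a single Reidemeister II move, and let $b^{(j)}$ range over the generator summands of $CKh(m_j\circ\cdots\circ m_1)(b')$. The inductive claim is that in every such summand the component containing $Q$ is labeled $x$. The base case $j=0$ is the hypothesis.

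For the inductive step, consider a move $m_{j+1}$. If its local disk is disjoint from the component $C$ through $Q$, then $\rho$ or $\rho'$ leaves the label of $C$ unchanged, since these maps keep labels outside the local picture. Otherwise the move involves $C$, but it does not traverse along $Q$. By Definition~\ref{defi} and the minimality of the diagram (which makes $\gamma_1,\gamma_2,\gamma$ distinct), the tentacle must then be crossing over $\gamma$ through $c_1$ or $c_2$, or be acting on a far part of $C$.

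In that case I would inspect the two maps from Figures~\ref{reidemeister} and~\ref{reidemeister-1}.
\begin{itemize}
\item Under $\rho'$, which creates two crossings, $C$ is either split or left alone. The piece containing $Q$ receives its label from the comultiplication. The comultiplication sends $x$ to $x\otimes x$ (up to sign), so every surviving summand keeps $x$ at $Q$. The extra summand, in which a small circle is born, is either killed or has its label forced to $1$ by Lemma~\ref{birth1}; that circle lies inside the tentacle and does not contain $Q$.
\item Under $\rho$, which removes two crossings, the piece containing $Q$ may merge with a component encased by the tentacle. By Lemma~\ref{birth1} that component is labeled $1$, and $1\cdot x = x$, so the label at $Q$ stays $x$.
\end{itemize}

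The main obstacle is the case where a merge joins $C$ with a component that is not encased by the tentacle but lies between a strand of $K$ and the lower tentacle strand. There the merge could produce $x\cdot x=0$, which is harmless, but the change of smoothing might also relocate which component contains $Q$. I would handle this with Lemma~\ref{kill1}. If that adjacent component is labeled $1$, the summand is killed by $CKh(S\circ H'\circ E'')$ and can be discarded, as in the proof of Proposition~\ref{localElemmax}. If it is labeled $x$, the product either vanishes or keeps the label $x$.

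Since the statement is only about summands that contribute, discarding the killed summands is legitimate. The remaining orientation and ordering cases are symmetric, as in the proofs of Lemmas~\ref{kill1} and~\ref{birth1}, which completes the induction.
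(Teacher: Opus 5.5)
Your argument is correct in substance, but it takes a different route from the paper. The paper proves this lemma exactly as it proves Lemma~\ref{farEmovelemma1}. It inducts over the far crossings that the component through $Q$ runs into, and enumerates the local pictures: six when the component continues along a strand of $K$, three when it continues along a tentacle strand. It then computes the passage of the tentacle in explicit figures and strikes out summands with Lemma~\ref{kill1}. You instead induct on single Reidemeister~II moves. You use only the shape of $\rho$ and $\rho'$, together with the fact that $x$ spans an ideal stable under comultiplication: $m(x\otimes 1)=x$, $m(x\otimes x)=0$, $\Delta(x)=x\otimes x$.

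Two of your descriptions of the maps should be fixed, although neither affects the conclusion.
\begin{itemize}
\item The correction term of $\rho'$ is, up to sign, the saddle on the two arcs followed by the birth of the bigon circle labeled $1$. That label comes from the formula for $\rho'$, not from Lemma~\ref{birth1}. Also, this saddle can \emph{merge} $C$ with another component, e.g.\ the one through the tentacle tip, not only split it.
\item The nonzero term of $\rho$ on the bigon smoothing requires the bigon circle to be labeled $x$. It deletes that circle and then applies a saddle; it does not merge $C$ with an encased $1$-labeled circle.
\end{itemize}
Since $Q$ lies outside the disk supporting each move, the component through $Q$ only ever undergoes merges and splits. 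So its label stays $x$ or the term vanishes.

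Consequently Lemmas~\ref{kill1} and~\ref{birth1}, and the minimality assumption, are superfluous here. Merging with a $1$-labeled component already gives $x$, and $x\cdot x$ always vanishes, so no summand needs to be discarded. You in fact get the stronger statement that \emph{every} summand, not only the surviving ones, is labeled $x$ at $Q$. Equivalently, $\rho$ and $\rho'$ commute with the basepoint action of $x$ at $Q$, whose image is spanned by the generators labeled $x$ there.

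Your route buys a uniform, case-free proof and explains why the $x$-case is the easy one. The paper's figure-by-figure analysis has the advantage of running in parallel with Lemma~\ref{farEmovelemma1}, where your argument does not transfer. There $\Delta(1)=1\otimes x+x\otimes 1$ and $m(1\otimes x)=x$ can change the label at $Q$, so the pruning by Lemma~\ref{kill1} is essential.
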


Propositions \ref{localElemma} and \ref{localElemmax} described the local labels when the strand in question is bounded by two crossings. To conclude the proof of the main result, we need to handle two exceptional cases. Namely, when the tentacle traverses the first crossing, and when the tentacle traverses the last crossing. These are taken care of by the following lemmas.

\begin{lemma}\label{beginlemma}
    Let $b'$ be a generator summand of $Kh(E)(a)$ such that $Kh(S\circ H')(b')\neq 0$. Let $Q$ be a point of $K$ around the attaching region for the 1-handle $H$. Then the labels of $a$ and $b'$ at $Q$ are the same.
\end{lemma}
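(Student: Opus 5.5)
The plan is to follow the point $Q$ through the movie of $E$ and show its label can never change along a summand that survives $CKh(S\circ H')$. The key geometric input is this. $Q$ sits on $K$ next to the attaching region, that is, at the base of the tentacle. The moves of $E$ near $Q$ therefore happen at two moments only. The first is at the very start, when the tentacle begins to move and first meets the crossing $c_1$ of $K$ just after the attaching region. The second is at the very end, when the tentacle has gone all the way around $K$ and comes back through the last crossing $c_m$ before the attaching region. Every Reidemeister II move in between happens away from $Q$.

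I would write $E = E_3\circ E_2\circ E_1$. Here $E_1$ is the set of moves in which the tentacle crosses $c_1$, $E_3$ is the set of moves in which it crosses $c_m$, and $E_2$ is everything else.

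For $E_1$ there are two sub-steps. First, Lemma~\ref{rigiditylemma}, applied to $c_1$, fixes the smoothings of the new crossings $c_{01},c_{02},\dots$, up to a single binary choice. Second, I would check directly from the formulas for $\rho'$ that on those smoothings the component through $Q$ keeps its label. Where $\rho'$ splits off a new small circle, that circle is encased in the tentacle, so Lemma~\ref{birth1} says it is labeled $1$, and the label at $Q$ is carried by the other component. In the competing term, the label would be moved onto a region between $K$ and the lower strand of the tentacle; if that label is $1$, Lemma~\ref{kill1} kills the summand.

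For $E_2$, none of the moves traverse along $Q$. So Lemma~\ref{farEmovelemma1} and Lemma~\ref{farEmovelemmax}, applied to the decomposition $E = (E_3\circ E_2)\circ E_1$, show that the label at $Q$ is unchanged.

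For $E_3$, the tentacle reaches $c_m$ from the other side. Here I would repeat the local analysis used for $E_1$, but with the roles of the strands reflected. I would use Lemma~\ref{rigiditylemma} at $c_m$, and Lemma~\ref{kill1} and Lemma~\ref{birth1} to discard the terms where the label at $Q$ would flip.

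The main obstacle is $E_3$. There, the component through $Q$ may already have been merged with components coming from the far parts of the tentacle. A term where $\rho'$ moves an $x$ onto the encased circle is killed by Lemma~\ref{birth1}, but the opposite case needs care: the surviving term has to put the original label back on the strand through $Q$. My first approach would be to track the labels using Propositions~\ref{localElemma} and \ref{localElemmax}, applied to the strand of $K$ between $c_m$ and the attaching region. That pins down the label at the corresponding point $P$ to be the label of $a$. Then, locally, $P$ and $Q$ turn out to lie on the same component of $b'$ after the final move. If that fails, I would fall back on a case check at $c_m$ over the four orientation and ordering configurations, in the style of Figure~\ref{kill1lemma}.
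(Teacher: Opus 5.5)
\paragraph{The geometric premise fails.} You say the only moves of $E$ near $Q$ are the tentacle's first pass through $c_1$ and its final return through $c_m$. That is false, and it hides half of what the paper actually checks. The tentacle passes every crossing of $K$ twice, once along each strand. So after your $E_1$, there is a later pass through $c_1$ along the \emph{other} strand of $c_1$; in the language of Definition \ref{defi}, the tentacle traverses over $Q$ through $c_1$. By then the strand through $Q$ is already flanked by the two tentacle strands laid down in $E_1$. The new Reidemeister II moves therefore create the remaining crossings of the grid in Figure \ref{crossinglabel} directly on the component through $Q$, which can be merged or split there. (Your appeal to Lemma \ref{rigiditylemma} at $c_1$ already involves these crossings.)

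You file these moves under $E_2$ and dismiss them with Lemmas \ref{farEmovelemma1} and \ref{farEmovelemmax}. Those lemmas assume $E'$ has not traversed along $Q$, which fails for $E'=E_1$ by construction. Moreover, the paper uses the far lemmas only for moves outside the local picture. Passes of the tentacle through the bounding crossings of a strand, such as $\rho_2,\rho_3$ in the proof of Proposition \ref{localElemma}, are computed by hand.

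The paper's proof of this lemma is exactly such a hand computation at the first crossing. It has four cases (smoothing of $c_1$ in $a$ times label at $Q$), and each is checked twice: once after the tentacle traverses along $c_1$, and once after it traverses over $c_1$. Your proposal supplies only the first of these two checks. The missing ingredient is the explicit local computation of the second pass through $c_1$ in all four cases.

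\paragraph{The $E_3$ step is not yet an argument.} You apply Propositions \ref{localElemma} and \ref{localElemmax} to the strand between $c_m$ and the attaching region. Those propositions concern a strand bounded by two crossings of $K$, along which the tentacle runs from one crossing to the next. The paper explicitly singles out the tentacle's passage through the first and last crossings as exceptional cases these propositions do not cover, which is why Lemmas \ref{beginlemma} and \ref{endlemma} are needed at all. So the propositions cannot be invoked there. Beyond that, your claim that $P$ and $Q$ end up on the same component is unverified, and the fallback case check is unspecified. Note also that the paper does not analyze the return through $c_m$ in this lemma; that is the content of Lemma \ref{endlemma}.
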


\begin{lemma}\label{endlemma}
    Let $P$, $Q$ be points of $K$, $\bar K$ near the attaching region of the $1$-handle $H$. Let $a\in CKh(K\sqcup \bar K)$ be a generator, and suppose that the component of $a$ containing $P$ is labeled $p$, and the component containing $Q$ is labeled $q$. Let $b'$ be a generator summand of $CKh(H'\circ E)(a)$ not mapped to zero under $CKh(S)$. Then the component of $b'$ containing the $1$-handle $H'$ is labeled $m(p\otimes q)$.
\end{lemma}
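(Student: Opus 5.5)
The plan is to track the label of the component carrying the $1$-handle of $H'$ through the movie $H'\circ E$. Let $C_P$ be the component of the smoothing underlying $a$ through $P$, labeled $p$, and $C_Q$ the one through $Q$, labeled $q$. The cobordism $E$ consists only of Reidemeister II moves. Each such move either fixes the labels outside its local picture or kills the generator (by the descriptions of $\rho,\rho'$). Hence it suffices to determine two labels in a summand $b$ of $CKh(E)(a)$ that survives $CKh(S)$ after $H'$: the label at the tip of the tentacle, and the label at $Q$. Once both are known, the $1$-handle $H'$ merges the two components (the tip and $\bar K$ lie on distinct components of $K'\sqcup\bar K$). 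On merge, $\mu_1$ acts by $m$, so the merged component is labeled $m(\text{label at tip}\otimes\text{label at }Q)$.

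First, the label at $Q$. The tentacle only ever interacts with strands of $K$ and never enters the ball containing $\bar K$ except at its tip. So every Reidemeister II move of $E$ occurs away from $\bar K$, and the component of $b$ through $Q$ is the unchanged component $C_Q$ of $a$, still labeled $q$.

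Second, the label at the tip, which I propagate along $K$ crossing by crossing. The tip starts on $C_P$. Lemma \ref{beginlemma} shows that near the attaching region the surviving summands carry label $p$ at the relevant point. Then for each strand $\gamma$ of $K$ between adjacent crossings, Propositions \ref{localElemma} and \ref{localElemmax} say that the surviving summands have label at the point on $\gamma'$ equal to the label of the component of $a$ through $\gamma$. Lemmas \ref{farEmovelemma1} and \ref{farEmovelemmax} ensure that this label is not disturbed by later moves that do not traverse along that point. Lemma \ref{birth1} ensures that the small components trapped between the two tentacle strands are labeled $1$, so they contribute trivially under multiplication. Lemma \ref{kill1} removes the summands in which a $1$-labeled component sits between $K$ and the lower strand of the tentacle.

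Iterating this once around $K$ brings the tip back to the attaching region. At that point the outermost strand of the tentacle, which contains the tip, lies on a component whose label is inherited from the final strand of $K$ before the attaching region. That strand lies on $C_P$ in $a$, so its label is $p$. Applying $\mu_1$ then gives $m(p\otimes q)$.

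The main obstacle is the final crossing, where the tentacle wraps back onto itself near the attaching region. There the strands $\gamma,\gamma_1,\gamma_2$ are not in general position with respect to the tentacle's own earlier portion, so Propositions \ref{localElemma}, \ref{localElemmax} do not apply verbatim. I would handle this with a direct local computation in the style of Lemma \ref{rigiditylemma}. One uses $S$ to force the smoothings of the last few created crossings, and then checks with $\rho'$ that the only surviving labeling at the tip agrees with the label at the attaching point. Signs are irrelevant, since the statement concerns which generators appear.
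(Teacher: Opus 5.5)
Your proposal follows the paper's route. The paper's proof of Lemma~\ref{endlemma} is exactly the local computation near the attaching region that you single out as the main obstacle, split into two cases according to how the last crossing of $K$ is resolved in $a$; it takes as evident that the label $q$ at $Q$ survives $E$ and that $H'$ acts by $m$.

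Note, though, that this two-case check at the final crossing is the whole substance of the lemma. Your outline therefore becomes a proof only once those two local computations are actually carried out, rather than just promised.
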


Note that the component of $CKh(H)(a)$ containing the $1$-handle $H$ is labeled $m(p\otimes q)$ as well. 

Now we prove the main result. The argument will proceed by mimicking the proofs of Propositions \ref{localElemma} and \ref{localElemmax} and Lemmas \ref{farEmovelemma1} and \ref{farEmovelemmax}, but for the cobordism $S$ in place of $E$.

\begin{proof}[Proof of Theorem \ref{mainresult}]
    Let $b'$ be a generator summand of $Kh(H')Kh(E)(a)$ not mapped to zero under $Kh(S)$. The proofs of Propositions \ref{localElemma}, \ref{localElemmax} and Lemmas \ref{beginlemma}, \ref{endlemma} imply that the smoothing data of two adjacent crossings, and the label of the strand bounded by the crossings in $a$, completely determine the generator summand. In particular, such a generator summand $b'$ exists, and is unique.

    By Lemmas \ref{kill1} and \ref{birth1}, we know that every component of $b'$ lying strictly between a strand of $K$ and the lower strand of the tentacle must be labeled $x$, and that every component bounded within the tentacle must be labeled $1$. We claim that up to signs, this forces $S$ to act by simply erasing the strand along $K$ and the bottom strand of the tentacle along $K$ while keeping the labels on the upper strand fixed. Since the intersections of the upper strands of tentacles are resolved and labeled in the same way as the corresponding crossings on $K$, it follows from the claim that $CKh(S\circ H'\circ E)(a) = CKh(S)(b') = \pm CKh(H)(a)$. The sign comes from the $-$ sign in the Reidemeister II move induced chain map on $CKh$. More precisely, the sign counts the mod $2$ number of components of $b'$ lying strictly between a strand of $K$ and a lower strand of the tentacle.

    Hence we are reduced to proving the claim. Figures \ref{localElemma1}-\ref{localElemma4'} and \ref{localElemmax1}-\ref{localElemmax4'} force the local picture of $b'$ around a strand $\gamma$ of $K$ to fit into one of the eight cases described by Figures \ref{localSlemma0-1}-\ref{localSlemma0-2}, provided that the strands are oriented as described (left vertical strand upwards, right vertical strand downwards). It suffices to show that the moves of $S$ keep the label at $P$ fixed. The verification of the claim for the first case, for moves of $S$ occurring in the local picture, is detailed in Figure \ref{localSlemma1'-1'}. The verification for the remaining cases (for each choice of orientations and resolutions) are completely similar.

    For moves of $S$ occurring outside the local picture, there are eight cases to consider for each choice of orientation and resolution, as described in Figures \ref{farSlemma0-1}-\ref{farSlemma0'}. The verification of the claim for the first case is detailed in Figure \ref{farSlemma1}. Again, the verification for the remaining cases is completely similar.
\end{proof}

In light of Corollary \ref{corollary}, we end with the following interesting question. For which knots $K$ is the $1$-handle map $Kh(K\sqcup \bar K)\to Kh(K\#\bar K)$ surjective?

\section*{Appendix}

In this appendix we collect the proofs of the technical results in Section \ref{4}.

\subsection{Proof of Propsition \ref{localElemma}.}
    Suppose without loss of generality that $\gamma$ is configured as in Figure \ref{localElemma0}. Decompose
    \[
    E =  E_3\circ \rho_3\circ E_2\circ\rho_2\circ E_1 \circ \rho_1\circ E_0,
    \]
    where $\rho_i$ indicates the sequence of Reidemeiester II moves occurring in the local picture of $\gamma$, along the $i$-th strand of $K$, and $E_i$ indicates the sequence of interloping Reidemeister II moves of $E$ occurring outside the local picture.

    We will later show in Lemma \ref{farEmovelemma1} that a generator summand of $CKh(E_i)(b')$ (for $i>0$) not killed by $CKh(S\circ H')$ is labeled $1$ at $P$, provided $b'$ is labeled $1$ at $P$. In light of this, we need only consider the action of $\rho_i$ on the local picture.
    
    There are four ways to resolve the crossings $c_1$ and $c_2$. In each case, Lemma \ref{kill1} forces the generator summands of $CKh(\rho_1\circ E_0)(a)$ to take the form in Figure \ref{localElemma1-4}.

    Now we consider the effect of $CKh(\rho_2)$ on $CKh(E_1\circ \rho_1\circ E_0)(a)$. To each of the four cases in Figure \ref{localElemma1-4}, there are two possible cases, depending on which of the two vertical strands are traversed first. The resulting computations are gathered in Figures \ref{localElemma1}-\ref{localElemma4'}. Note that these are the only orientations of the vertical strands we have to consider, since the tentacles associated to any other orientations would not interact with the strand $\gamma$. Also note that $P$ is labeled $1$ in every generator summand not killed by $CKh(S\circ H')$. 

    The effect of $CKh(\rho_3)$ on $CKh(E_2\circ\rho_2\circ E_1\circ \rho_1\circ E_0)(a)$ can then be computed as follows. We apply the case when the left vertical strand is traversed first to the case when the right vertical strand is traversed first, by looking at the local picture of a strand bounded by the outer strand of a tentacle along the right vertical strand of $K$, $\gamma$, and the left vertical strand of $K$. This handles the case when the right vertical strand is traversed before the left vertical strand. 

    Vice versa, we can handle the remaining case when the left vertical strand is traversed before the right vertical strand. In fact, the two resulting local pictures are the same. i.e., there is no difference in the ordering of the strands when looking locally along $\gamma$.\qed

    \begin{figure}[!htbp]
    \centering
    \includegraphics[width=0.4\textwidth]{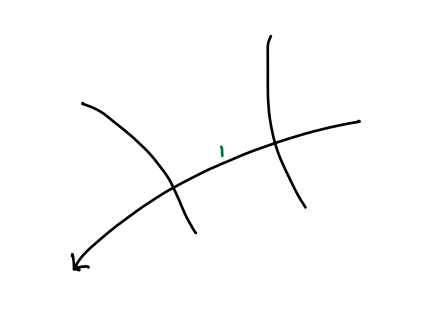}
    \caption{local diagram of $K$ with strand $\gamma$ bounded by crossings $c_1$, $c_2$ labeled $1$.}
    \label{localElemma0}
    \end{figure}
    \FloatBarrier

    \begin{figure}[!htbp]
    \centering
    \includegraphics[width=0.65\textwidth]{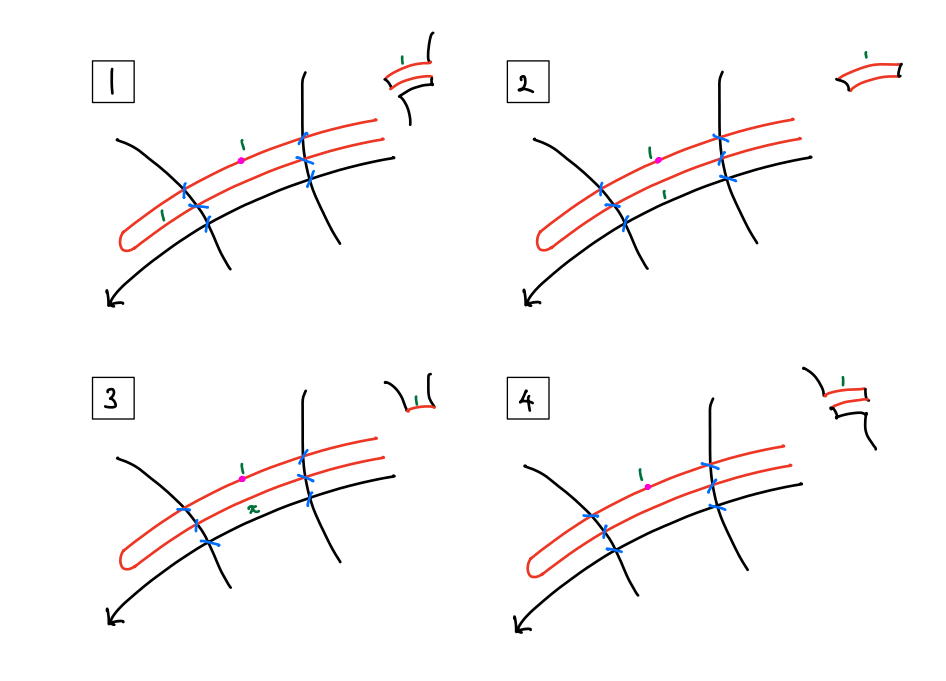}
    \caption{There are four ways to resolve the crossings $c_1$, $c_2$ in Figure \ref{localElemma0}. Rigidity forces the generator summands of $Kh(E')(a)$ that are not killed by $Kh(S\circ H'\circ E'')$ to be smoothed as above. Here we are using the notations of Lemma \ref{farEmovelemma1}. In the upper right corner of each local picture we describe how the component containing $P$ looks like.}
    \label{localElemma1-4}
    \end{figure}
    \FloatBarrier

    \begin{figure}[!htbp]
    \centering
    \includegraphics[width=0.65\textwidth]{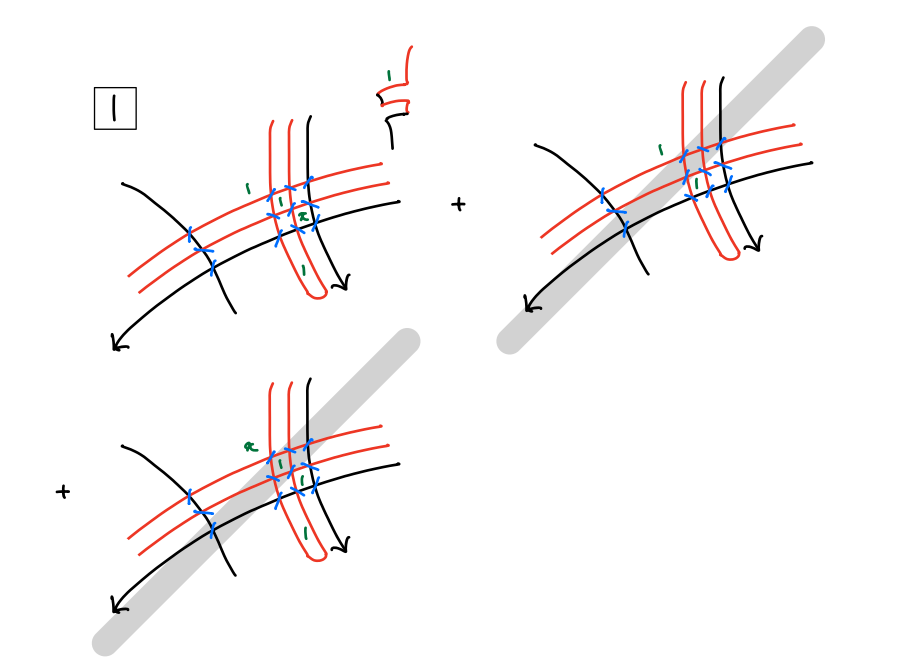}
    \caption{Case 1 of Figure \ref{localElemma1-4} when the tentacle enters the local picture from the right vertical strand first. Note that we need only consider the case when the right vertical strand is oriented downwards. Else, the tentacle does not interact with the component of interest within the local picture, and thus there is nothing to prove. By Lemma \ref{kill1}, the summands that have been struck out (in gray) are killed by $Kh(S\circ H'\circ E'')$.} 
    \label{localElemma1}
    \end{figure}
    \FloatBarrier

    \begin{figure}[!htbp]
    \centering
    \includegraphics[width=0.65\textwidth]{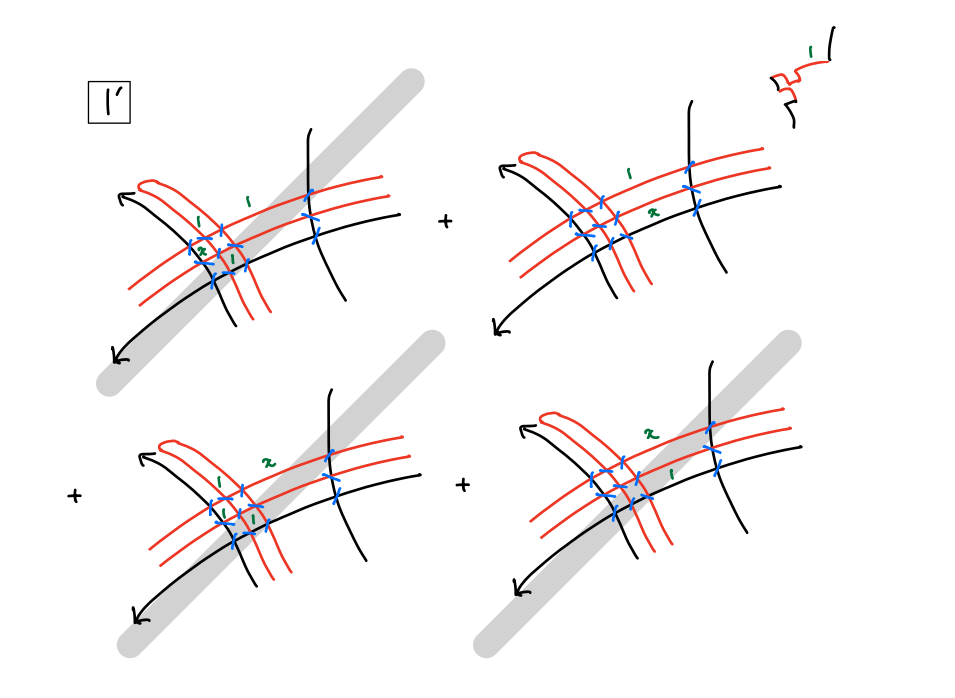}
    \caption{Case 1 of Figure \ref{localElemma1-4} when the tentacle enters the local picture from the left vertical strand first. Note that we need only consider the case when the left vertical strand is oriented upwards.}
    \label{localElemma1'}
    \end{figure}
    \FloatBarrier

    \begin{figure}[!htbp]
    \centering
    \includegraphics[width=0.65\textwidth]{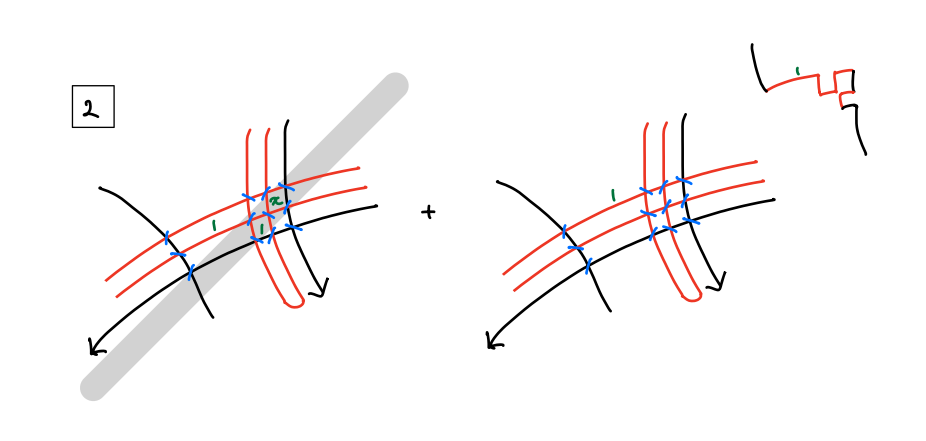}
    \end{figure}
    \FloatBarrier

    \begin{figure}[!htbp]
    \centering
    \includegraphics[width=0.65\textwidth]{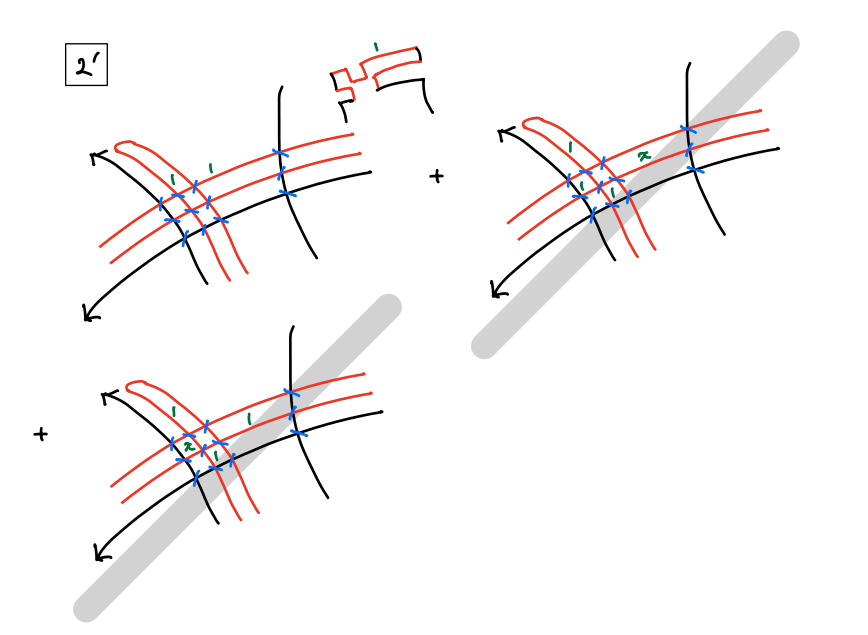}
    \caption{Case 2 of Figure \ref{localElemma1-4}.}
    \label{localElemma2'}
    \end{figure}
    \FloatBarrier

    \begin{figure}[!htbp]
    \centering
    \includegraphics[width=0.65\textwidth]{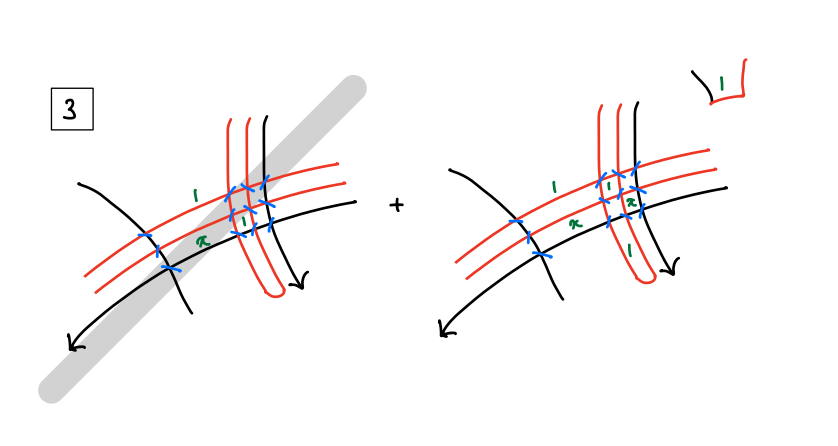}
    \label{localElemma3}
    \end{figure}
    \FloatBarrier

    \begin{figure}[!htbp]
    \centering
    \includegraphics[width=0.65\textwidth]{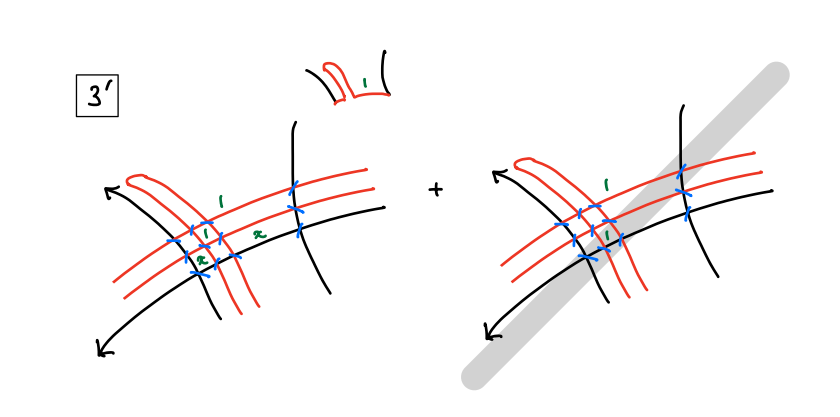}
    \caption{Case 3 of Figure \ref{localElemma1-4}.}
    \label{localElemma3'}
    \end{figure}
    \FloatBarrier

    \begin{figure}[!htbp]
    \centering
    \includegraphics[width=0.65\textwidth]{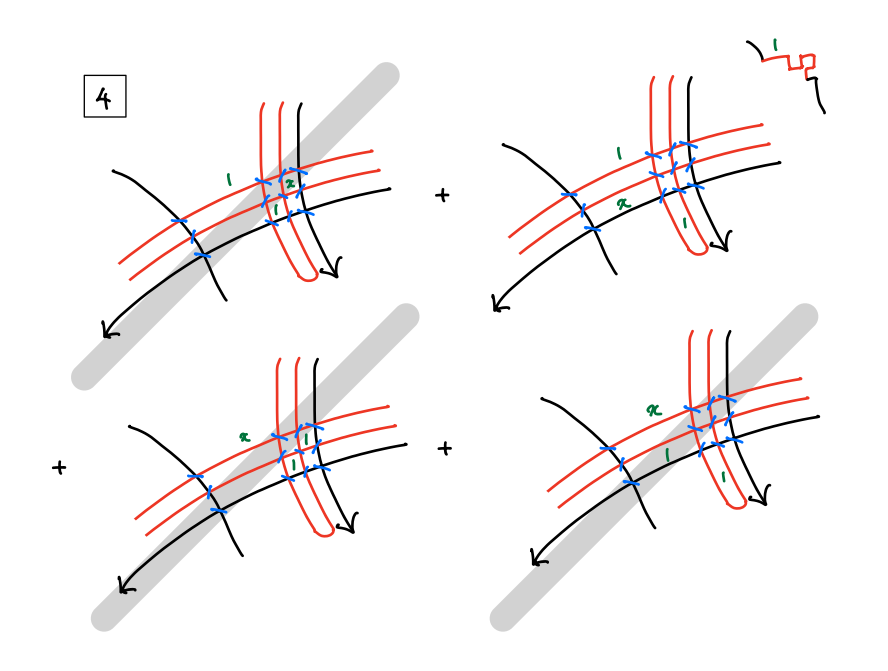}
    \label{localElemma4}
    \end{figure}
    \FloatBarrier

    \begin{figure}[!htbp]
    \centering
    \includegraphics[width=0.65\textwidth]{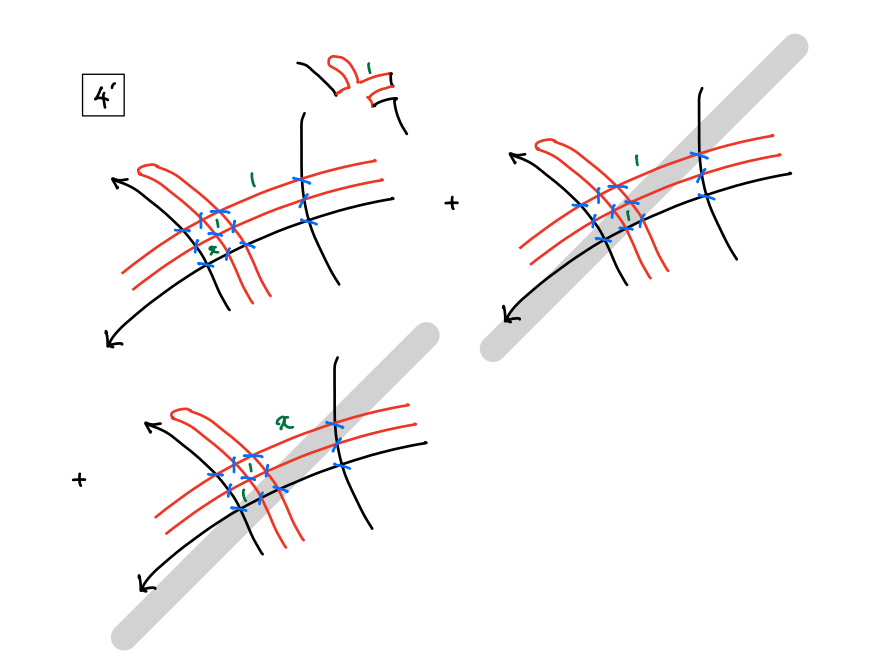}
    \caption{Case 4 of Figure \ref{localElemma1-4}.}
    \label{localElemma4'}
    \end{figure}
    \FloatBarrier

\subsection{Proof of Proposition \ref{localElemmax}}

    The proof is completely analogous to that of Proposition \ref{localElemma}, using Lemma \ref{farEmovelemmax} in place of Lemma \ref{farEmovelemma1}. The relevant computations are detailed in Figures \ref{localElemmax0}-\ref{localElemmax4'}.\qed
    
    \begin{figure}[!htbp]
    \centering
    \includegraphics[width=0.4\textwidth]{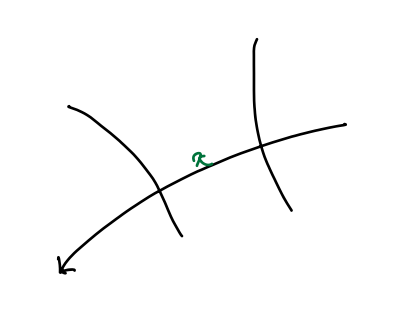}
    \caption{local diagram of $K$ with strand $\gamma$ bounded by crossings $c_1$, $c_2$ labeled $x$.}
    \label{localElemmax0}
    \end{figure}
    \FloatBarrier

    \begin{figure}[!htbp]
    \centering
    \includegraphics[width=0.65\textwidth]{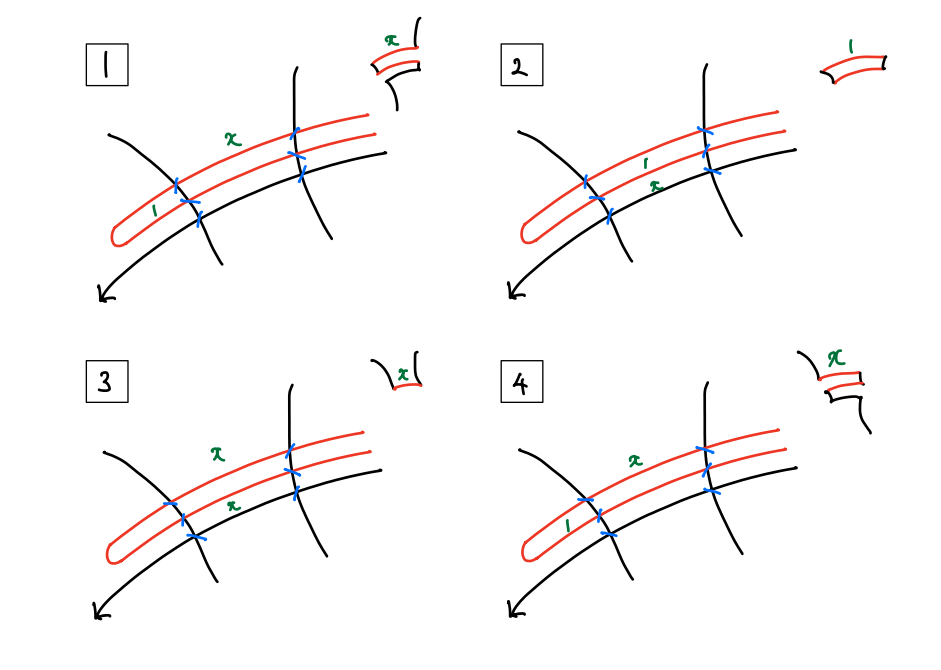}
    \caption{There are four ways to resolve the crossings $c_1$, $c_2$ in Figure \ref{localElemmax0}. Rigidity forces the generator summands of $Kh(E')(a)$ that are not killed by $Kh(S\circ H'\circ E'')$ to be smoothed as above. Here we are using the notations of Lemma \ref{farEmovelemmax}. In the upper right corner of each local picture we describe how the component containing $P$ looks like.}
    \label{localElemmax1-4}
    \end{figure}
    \FloatBarrier

    \begin{figure}[!htbp]
    \centering
    \includegraphics[width=0.65\textwidth]{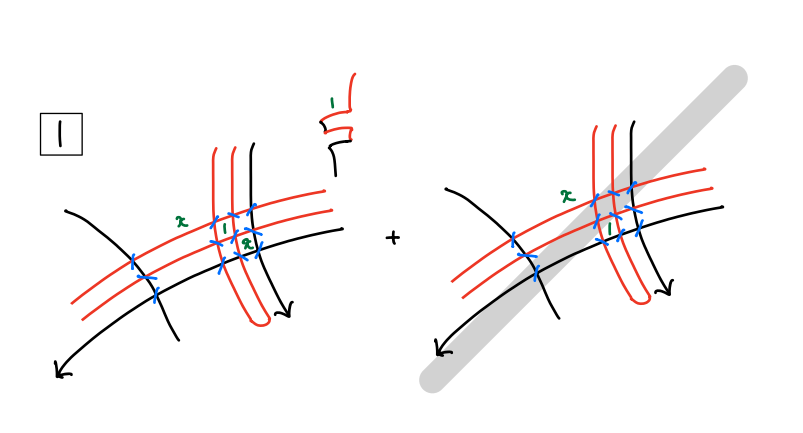}
    \caption{Case 1 of Figure \ref{localElemmax1-4} when the tentacle enters the local picture from the right vertical strand first. Note that we need only consider the case when the right vertical strand is oriented downwards.} 
    \label{localElemmax1}
    \end{figure}
    \FloatBarrier

    \begin{figure}[!htbp]
    \centering
    \includegraphics[width=0.65\textwidth]{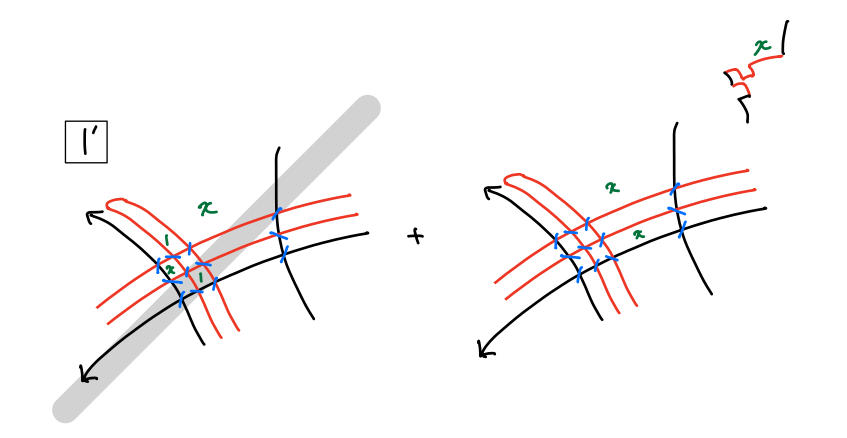}
    \caption{Case 1 of Figure \ref{localElemmax1-4} when the tentacle enters the local picture from the left vertical strand first. Note that we need only consider the case when the left vertical strand is oriented upwards.}
    \label{localElemmax1'}
    \end{figure}
    \FloatBarrier

    \begin{figure}[!htbp]
    \centering
    \includegraphics[width=0.65\textwidth]{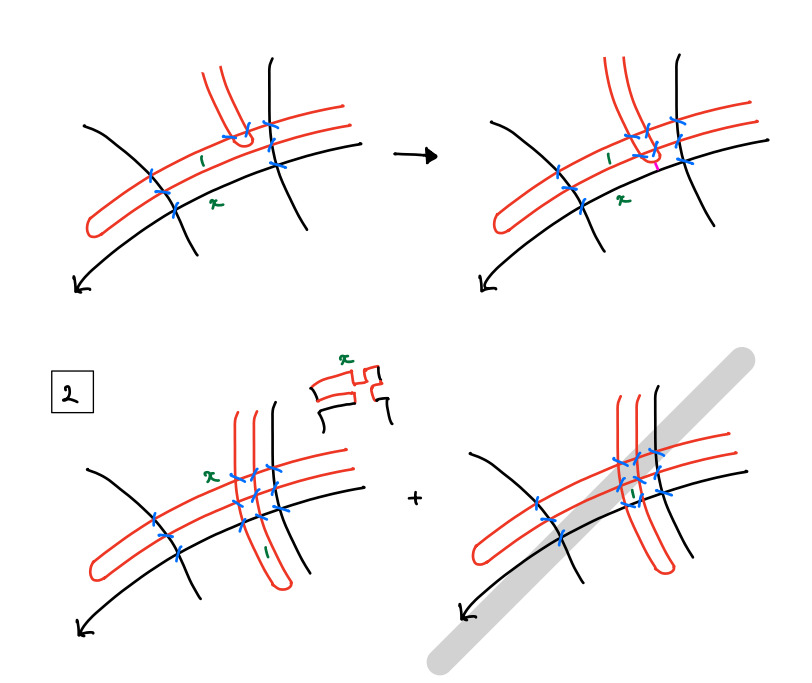}
    \label{localElemmax2}
    \end{figure}
    \FloatBarrier

    \begin{figure}[!htbp]
    \centering
    \includegraphics[width=0.65\textwidth]{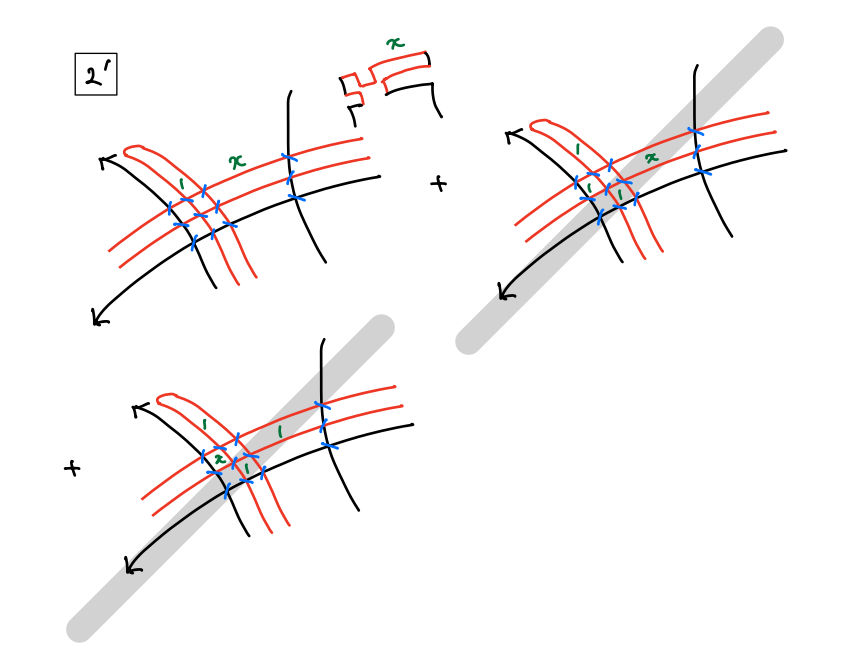}
    \caption{Case 2 of Figure \ref{localElemmax1-4}.}
    \label{localElemmax2'}
    \end{figure}
    \FloatBarrier

    \begin{figure}[!htbp]
    \centering
    \includegraphics[width=0.65\textwidth]{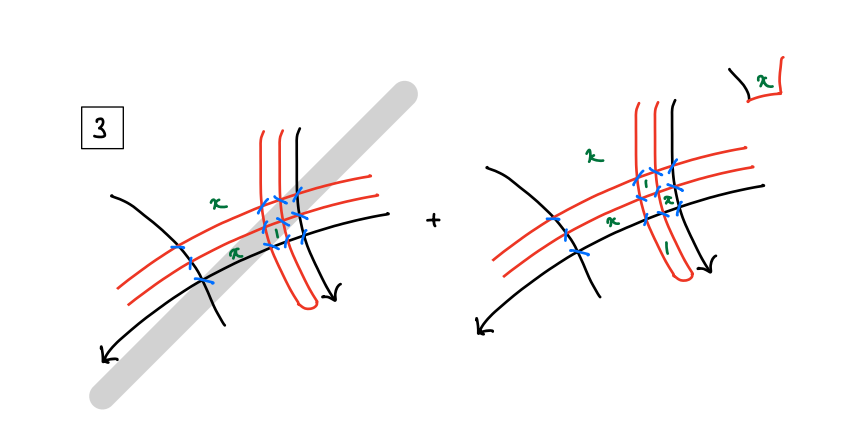}
    \label{localElemmax3}
    \end{figure}
    \FloatBarrier

    \begin{figure}[!htbp]
    \centering
    \includegraphics[width=0.65\textwidth]{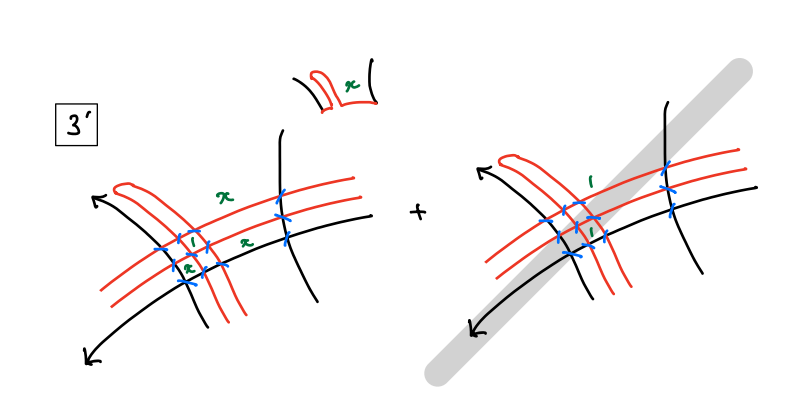}
    \caption{Case 3 of Figure \ref{localElemmax1-4}.}
    \label{localElemmax3'}
    \end{figure}
    \FloatBarrier

    \begin{figure}[!htbp]
    \centering
    \includegraphics[width=0.65\textwidth]{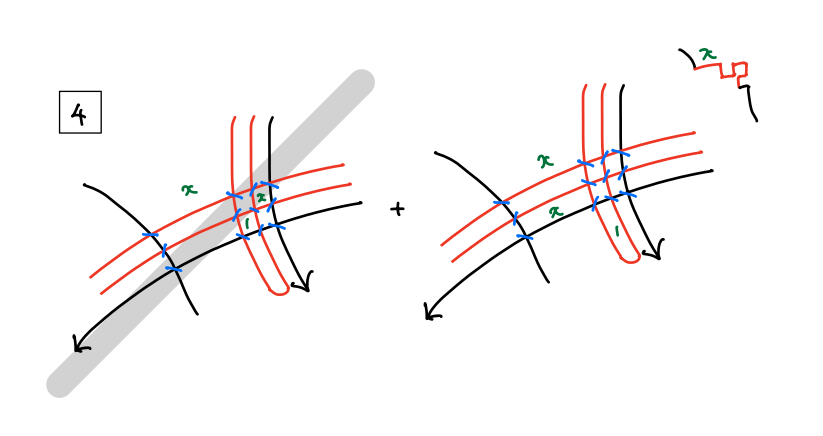}
    \label{localElemmax4}
    \end{figure}
    \FloatBarrier

    \begin{figure}[!htbp]
    \centering
    \includegraphics[width=0.65\textwidth]{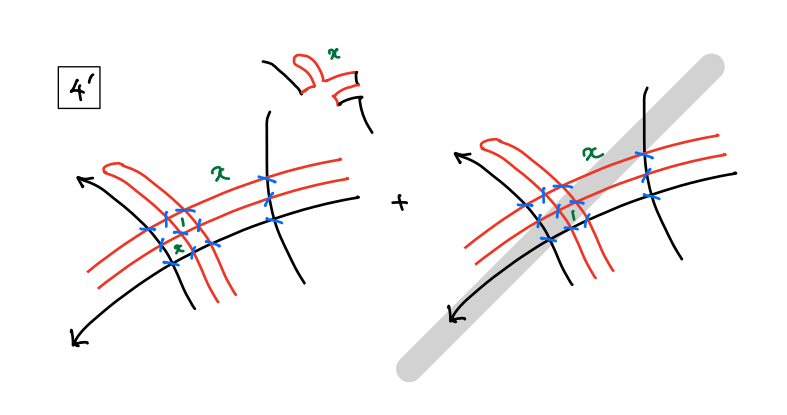}
    \caption{Case 4 of Figure \ref{localElemmax1-4}.}
    \label{localElemmax4'}
    \end{figure}
    \FloatBarrier

\subsection{Proof of Lemma \ref{farEmovelemma1}}

    By induction, it suffices to show that a strand $\gamma$ labeled $1$ in $b'$ that enters a local picture of a crossing stays labeled $1$, after $E''_1$ traverses over $Q$ through the strand other than $\gamma$.
    
    When $\gamma$ is a strand of $K$, there are six possible cases depending on how the crossing is resolved, and how the other strand is oriented. These are described in Figures \ref{farElemma0}-\ref{farElemma2}. The configurations not belonging to these six cases can be ignored, since the tentacle would not interact with the component containing $\gamma$ in the local picture. By rotating these figures and applying the appropriate computations in Figures \ref{localElemma1}-\ref{localElemma4'}, we can conclude that the tentacle along the horizontal strand of $K$ fixes the label at $\gamma$ as well.

    When $\gamma$ is a strand of the tentacle, there are three possible cases, depending on how the crossing is smoothed, and whether $\gamma$ belongs to the upper or the lower strand of the tentacle. These are handled in Figures \ref{farElemmaR0}-\ref{farElemmaR3}. Again, configurations not described in these figures can be ignored.\qed

    \begin{figure}[!htbp]
    \centering
    \includegraphics[width=0.65\textwidth]{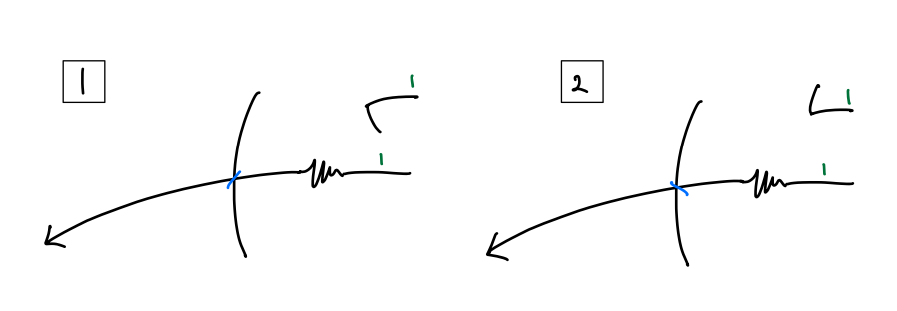}
    \caption{The strand labeled $1$ escapes the local picture of Proposition \ref{localElemma} along a strand of $K$ (drawn black), and encounters a crossing far away from the local picture. There are two ways this crossing can be resolved.}
    \label{farElemma0}
    \end{figure}
    \FloatBarrier

    \begin{figure}[!htbp]
    \centering
    \includegraphics[width=0.65\textwidth]{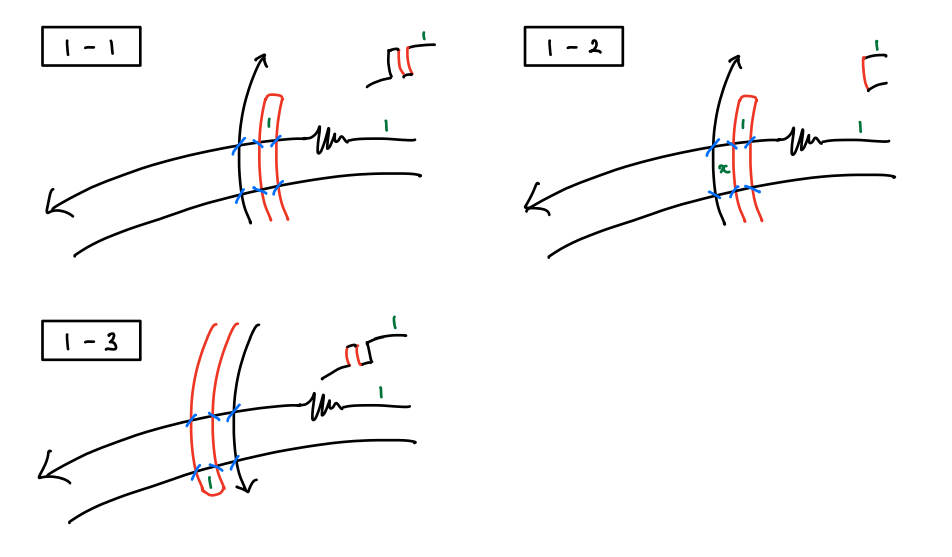}
    \caption{There are three ways a tentacle is introduced to the local picture in Case 1 of Figure \ref{farElemma0}. Note that these are the only cases we have to consider. Else, the tentacle will not interact with the relevant component in the local picture, or will be killed by $Kh(S\circ H'\circ E'')$.}
    \label{farElemma1}
    \end{figure}
    \FloatBarrier

    \begin{figure}[!htbp]
    \centering
    \includegraphics[width=0.65\textwidth]{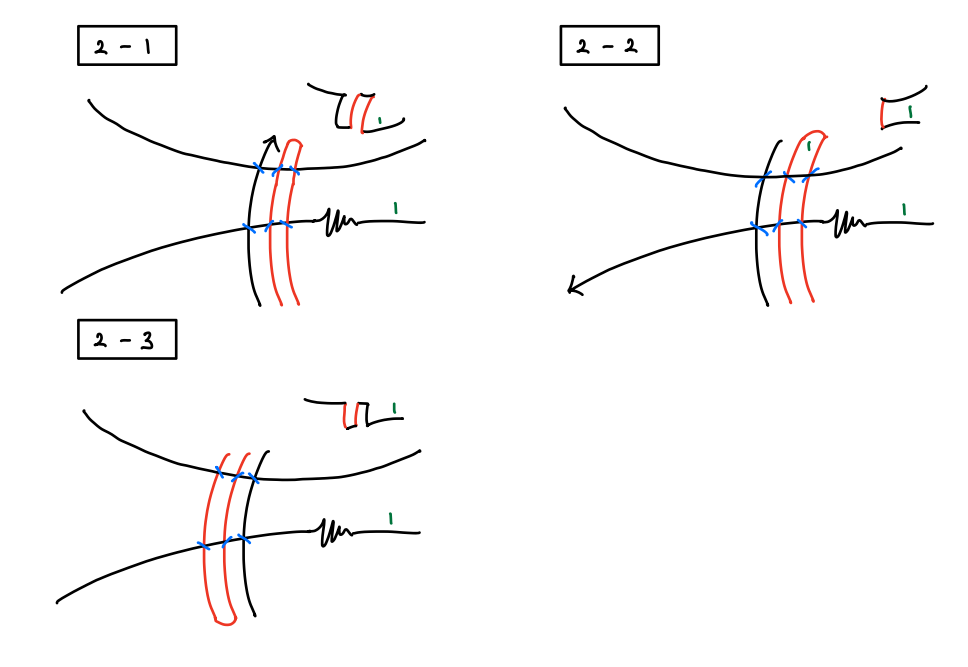}
    \caption{There are three ways a tentacle is introduced to the local picture in Case 2 of Figure \ref{farElemma0}.}
    \label{farElemma2}
    \end{figure}
    \FloatBarrier

    \begin{figure}[!htbp]
    \centering
    \includegraphics[width=0.65\textwidth]{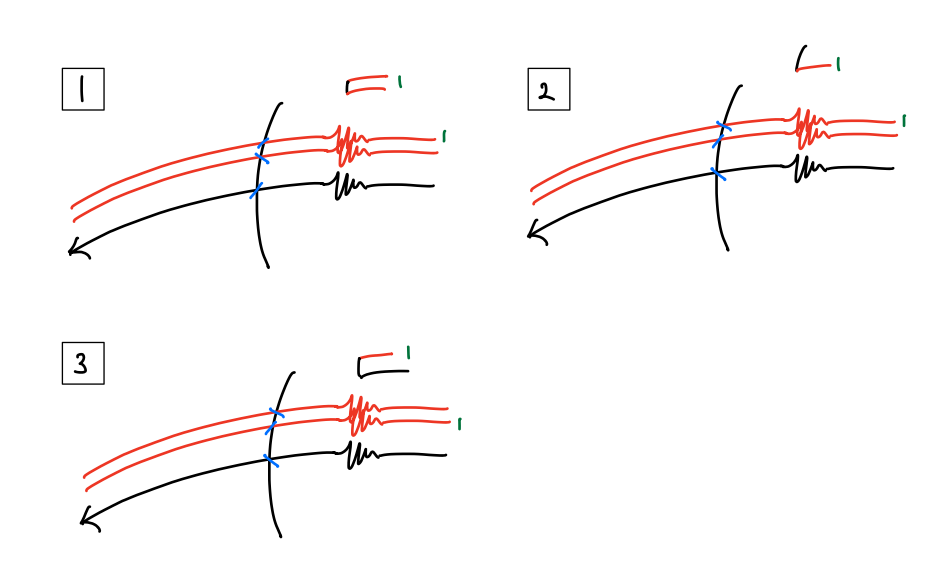}
    \caption{The strand labeled $1$ escapes the local picture of Proposition \ref{localElemma} along a strand of the tentacle (drawn red), and encounters a crossing far away from the local picture. There are three cases to consider.}
    \label{farElemmaR0}
    \end{figure}
    \FloatBarrier

    \begin{figure}[!htbp]
    \centering
    \includegraphics[width=0.65\textwidth]{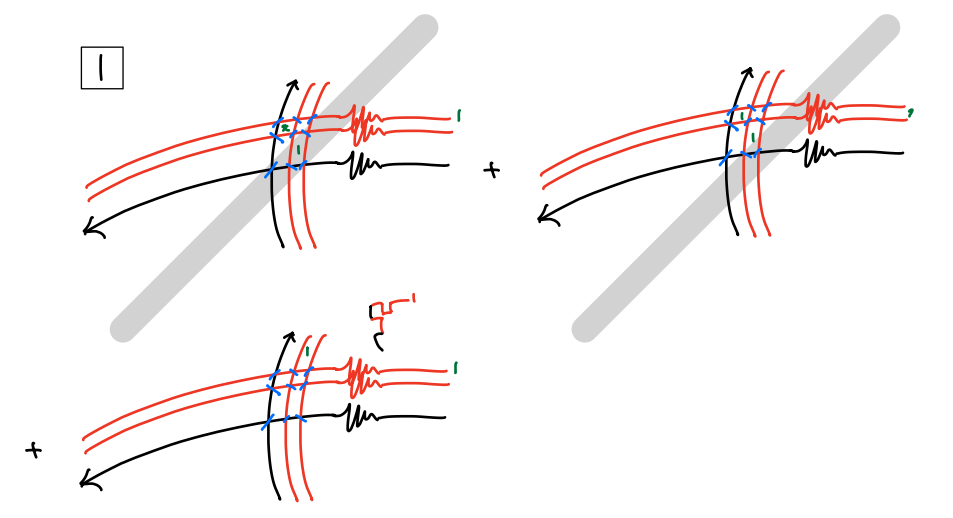}
    \caption{Case 1 of Figure \ref{farElemmaR0}. By Lemma \ref{kill1}, the summands that have been struck out (in gray) are killed by $Kh(S\circ H'\circ E'')$.}
    \label{farElemmaR1}
    \end{figure}
    \FloatBarrier

    \begin{figure}[!htbp]
    \centering
    \includegraphics[width=0.65\textwidth]{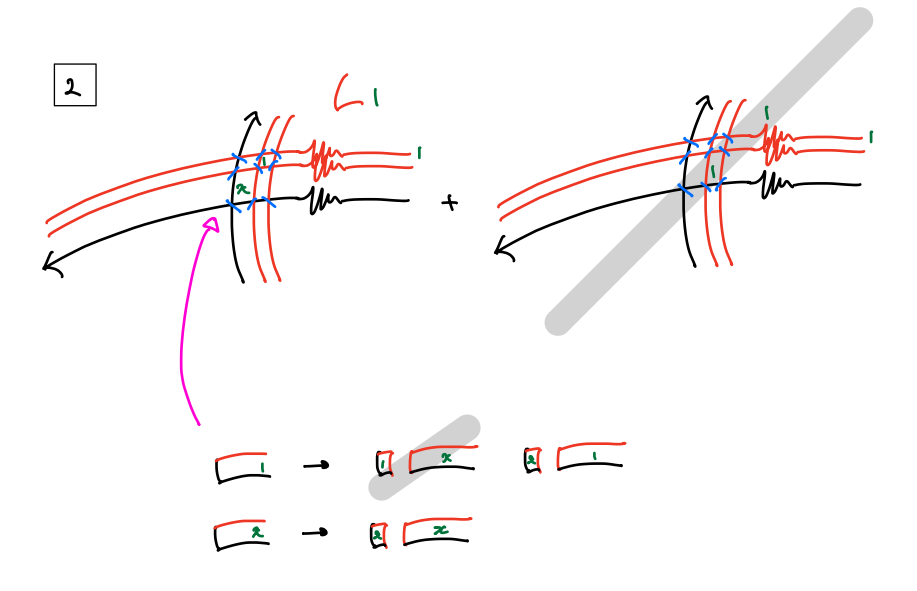}
    \caption{Case 2 of Figure \ref{farElemmaR0}. The bottom diagram justifies why we need only consider the summand on the left.}
    \label{farElemmaR2}
    \end{figure}
    \FloatBarrier

    \begin{figure}[!htbp]
    \centering
    \includegraphics[width=0.65\textwidth]{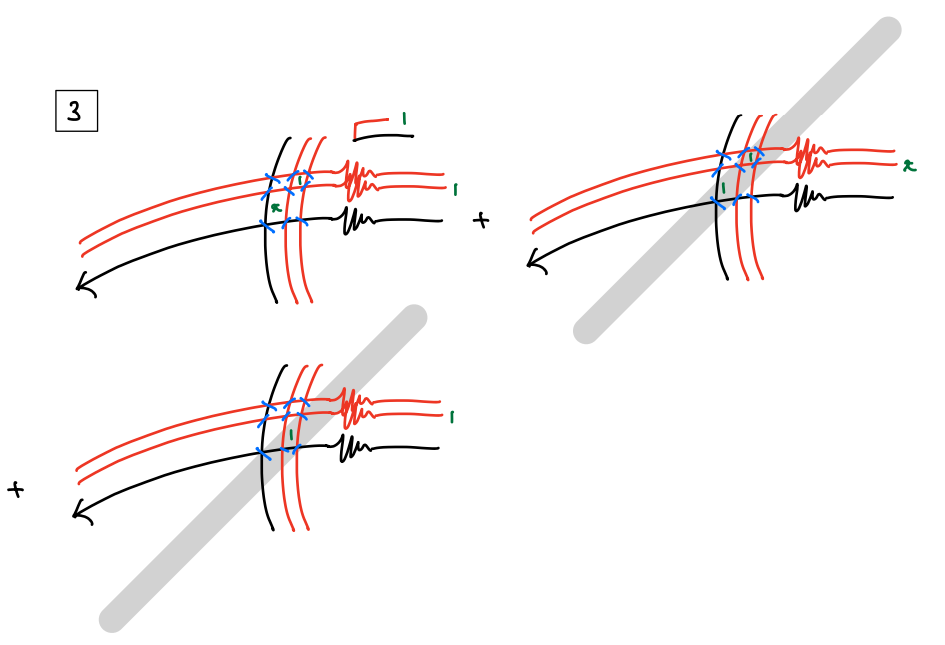}
    \caption{Case 3 of Figure \ref{farElemmaR0}.}
    \label{farElemmaR3}
    \end{figure}
    \FloatBarrier

\subsection{Proof of Lemma \ref{farEmovelemmax}}
    The proof is completely analogous to that of Lemma \ref{farEmovelemma1}. The relevant computations are detailed in Figures \ref{farElemmax0}-\ref{farElemmaxR3}.

    \begin{figure}[!htbp]
    \centering
    \includegraphics[width=0.65\textwidth]{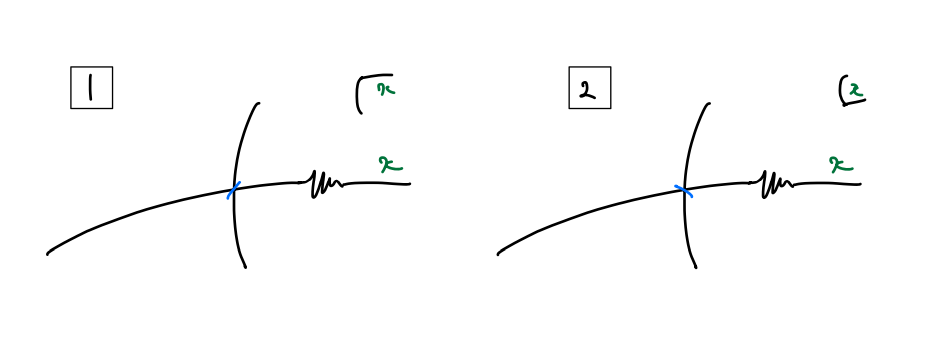}
    \caption{The strand labeled $x$ escapes the local picture of Proposition \ref{localElemma} along a strand of $K$ (drawn black), and encounters a crossing far away from the local picture. There are two ways this crossing can be resolved.}
    \label{farElemmax0}
    \end{figure}
    \FloatBarrier

    \begin{figure}[!htbp]
    \centering
    \includegraphics[width=0.65\textwidth]{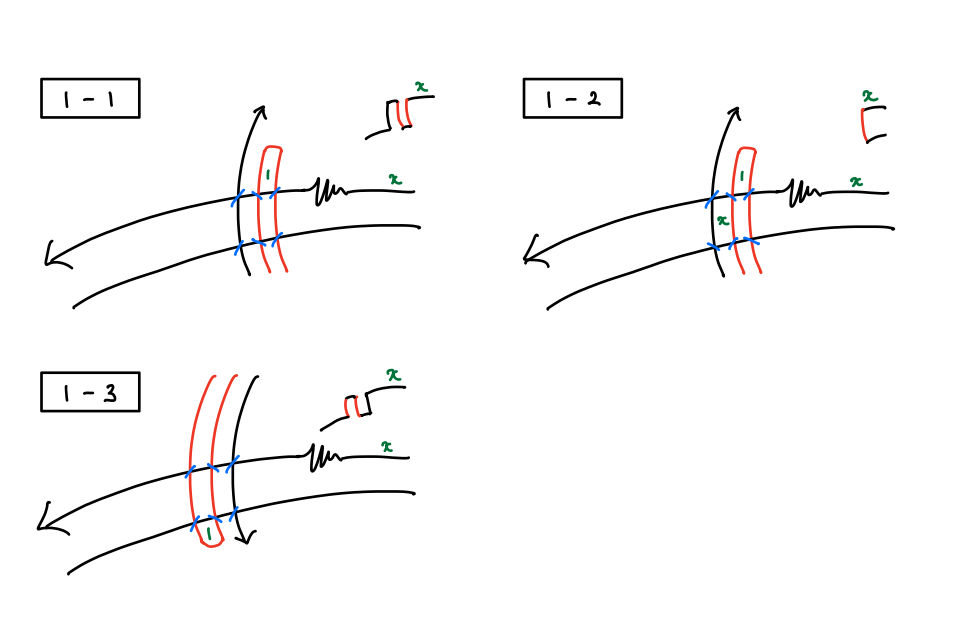}
    \caption{There are three ways a tentacle is introduced to the local picture in Case 1 of Figure \ref{farElemmax0}. Note that these are the only cases we have to consider. Else, the tentacle will not interact with the relevant component in the local picture.}
    \label{farElemmax1}
    \end{figure}
    \FloatBarrier

    \begin{figure}[!htbp]
    \centering
    \includegraphics[width=0.65\textwidth]{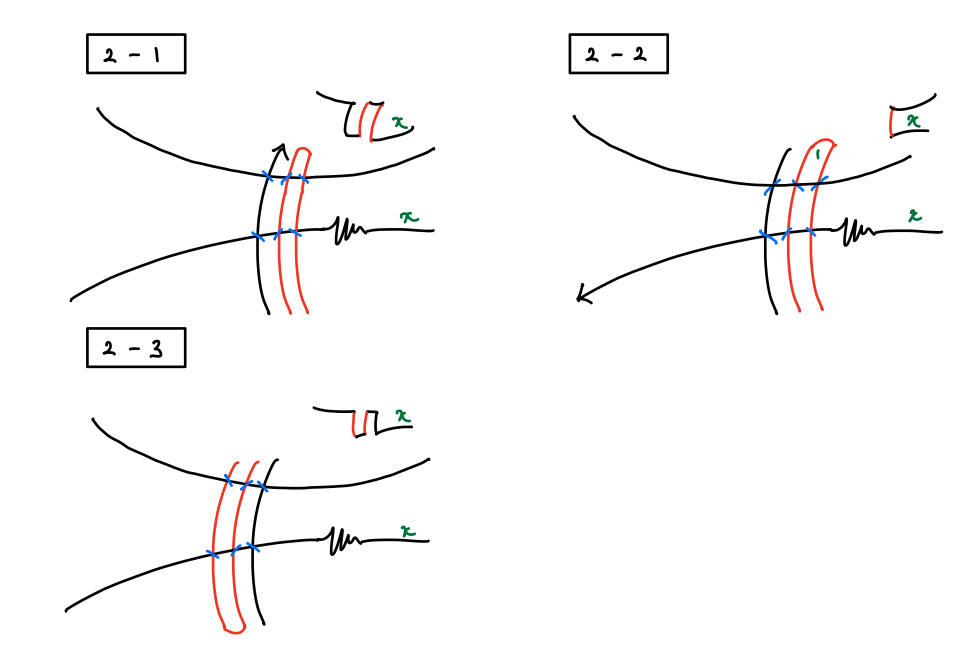}
    \caption{There are three ways a tentacle is introduced to the local picture in Case 2 of Figure \ref{farElemmax0}.}
    \label{farElemmax2}
    \end{figure}
    \FloatBarrier

    \begin{figure}[!htbp]
    \centering
    \includegraphics[width=0.65\textwidth]{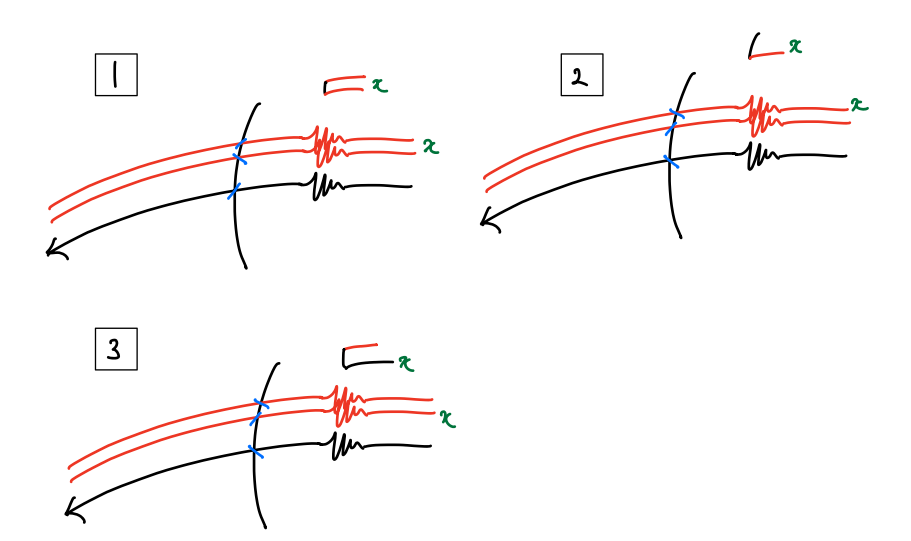}
    \caption{The strand labeled $x$ escapes the local picture of Proposition \ref{localElemmax} along a strand of the tentacle (drawn red), and encounters a crossing far away from the local picture. There are three cases to consider.}
    \label{farElemmaxR0}
    \end{figure}
    \FloatBarrier

    \begin{figure}[!htbp]
    \centering
    \includegraphics[width=0.65\textwidth]{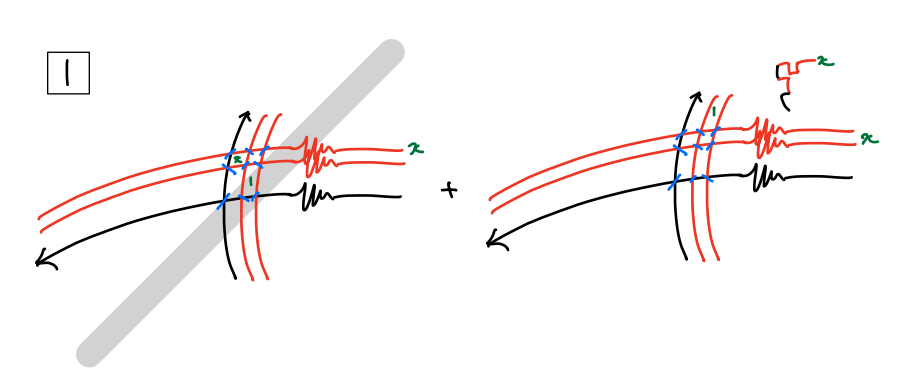}
    \caption{Case 1 of Figure \ref{farElemmaxR0}. By Lemma \ref{kill1}, the summands that have been struck out (in gray) are killed by $Kh(S\circ H'\circ E'')$.}
    \label{farElemmaxR1}
    \end{figure}
    \FloatBarrier

    \begin{figure}[!htbp]
    \centering
    \includegraphics[width=0.65\textwidth]{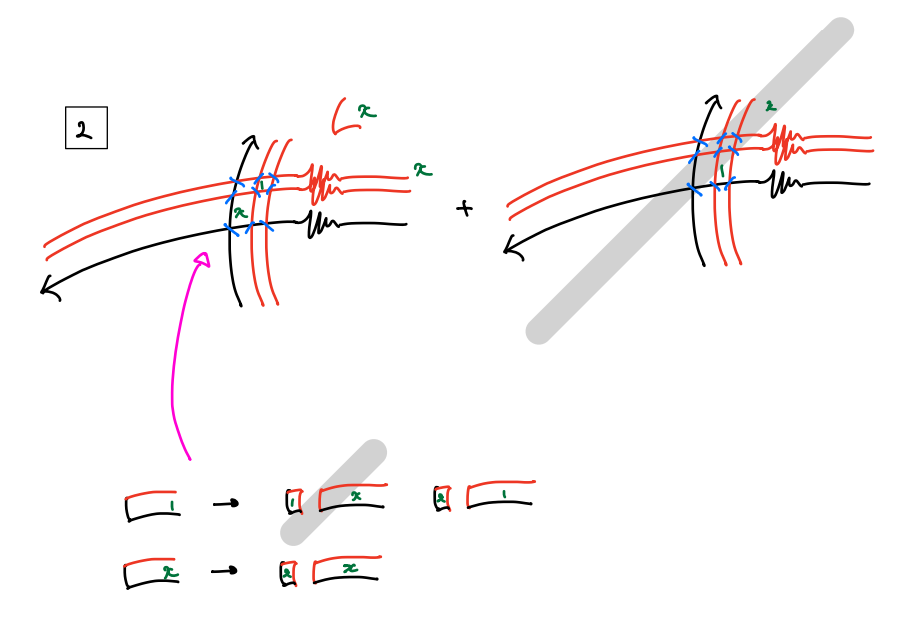}
    \caption{Case 2 of Figure \ref{farElemmaxR0}. The bottom diagram justifies why we need only consider the summand on the left.}
    \label{farElemmaxR2}
    \end{figure}
    \FloatBarrier

    \begin{figure}[!htbp]
    \centering
    \includegraphics[width=0.65\textwidth]{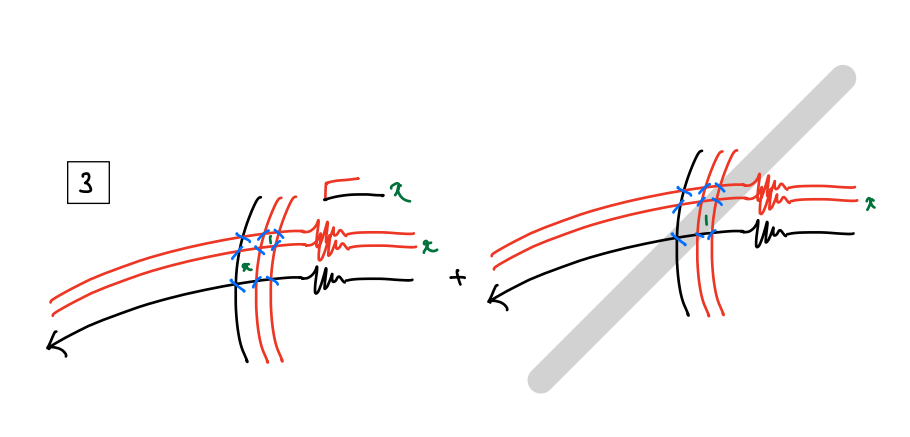}
    \caption{Case 3 of Figure \ref{farElemmaxR0}.}
    \label{farElemmaxR3}
    \end{figure}
    \FloatBarrier

\subsection{Proof of Lemma \ref{beginlemma}}

    As described in Figure \ref{beginlemma0}, there are four possible cases, depending on how the first crossing of $K$ is resolved in $a$, and how $Q$ is labeled. The computations detailed in Figures \ref{beginlemma1}-\ref{beginlemma4'} verify the claim.\qed

    \begin{figure}[!htbp]
    \centering
    \includegraphics[width=0.65\textwidth]{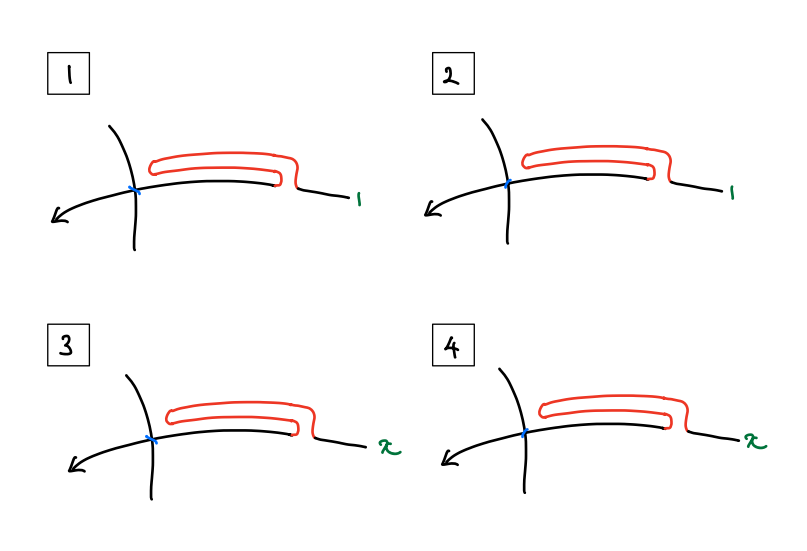}
    \caption{local picture of around the first crossing of $K$. There are four cases depending on how the crossing is resolved, and how the component containing the attaching region in is labeled in $a$.}
    \label{beginlemma0}
    \end{figure}
    \FloatBarrier

    \begin{figure}[!htbp]
    \centering
    \includegraphics[width=0.65\textwidth]{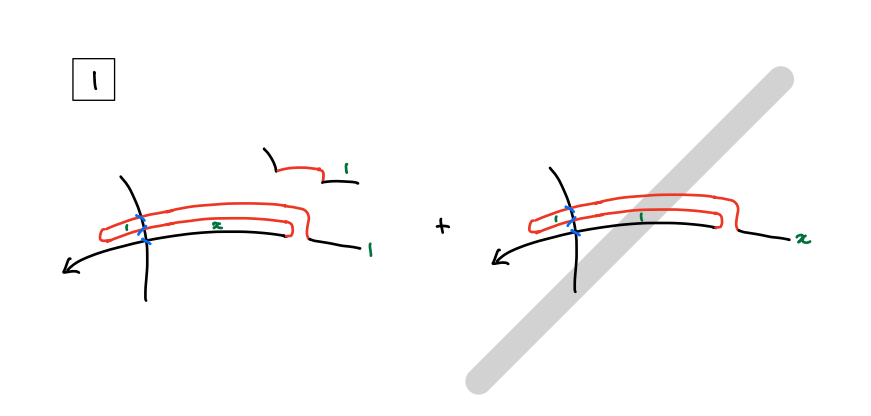}
    \caption{Case $1$ of Figure \ref{beginlemma0} after the tentacle traverses along the crossing.}
    \label{beginlemma1}
    \end{figure}
    \FloatBarrier

    \begin{figure}[!htbp]
    \centering
    \includegraphics[width=0.65\textwidth]{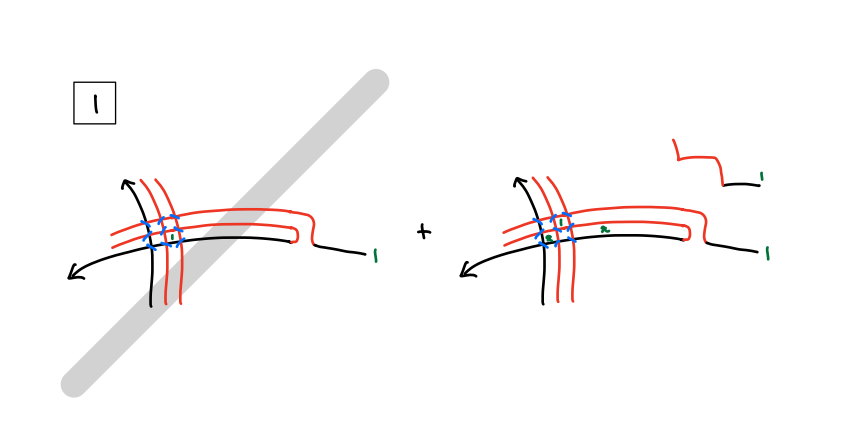}
    \caption{Case $1$ of Figure \ref{beginlemma0} after the tentacle traverses over the crossing.}
    \label{beginlemma1_}
    \end{figure}
    \FloatBarrier

    \begin{figure}[!htbp]
    \centering
    \includegraphics[width=0.65\textwidth]{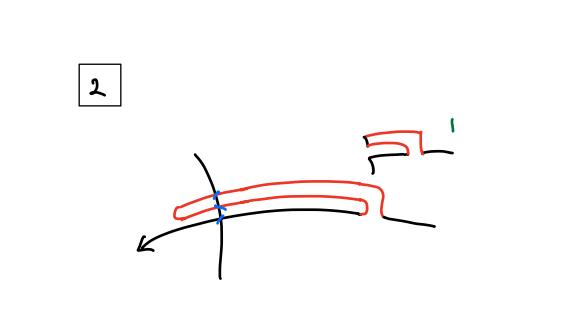}
    \caption{Case $2$ of Figure \ref{beginlemma0} after the tentacle traverses along the crossing.}
    \label{beginlemma2}
    \end{figure}
    \FloatBarrier

    \begin{figure}[!htbp]
    \centering
    \includegraphics[width=0.65\textwidth]{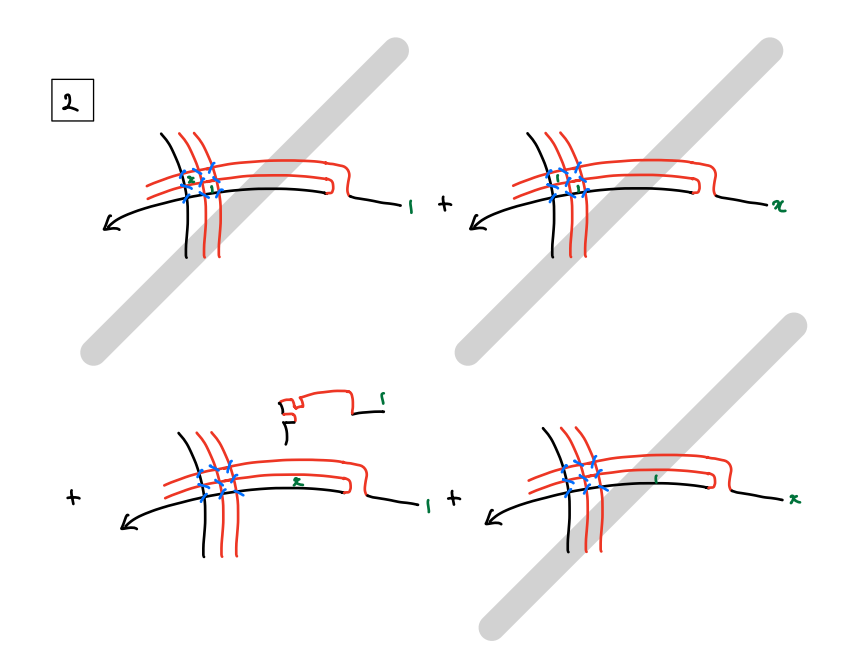}
    \caption{Case $2$ of Figure \ref{beginlemma0} after the tentacle traverses over the crossing.}
    \label{beginlemma2_}
    \end{figure}
    \FloatBarrier

    \begin{figure}[!htbp]
    \centering
    \includegraphics[width=0.4\textwidth]{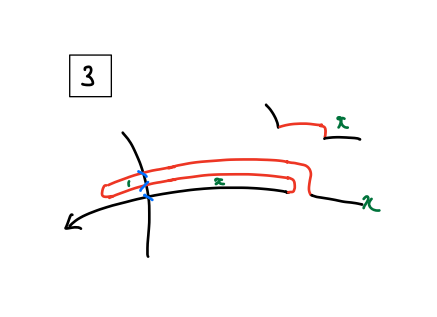}
    \caption{Case $3$ of Figure \ref{beginlemma0} after the tentacle traverses along the crossing.}
    \label{beginlemma3}
    \end{figure}
    \FloatBarrier

    \begin{figure}[!htbp]
    \centering
    \includegraphics[width=0.65\textwidth]{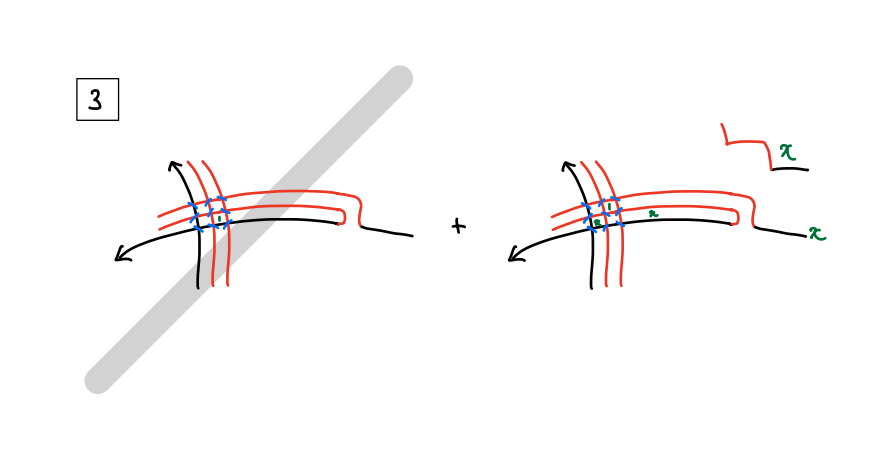}
    \caption{Case $3$ of Figure \ref{beginlemma0} after the tentacle traverses over the crossing.}
    \label{beginlemma3_}
    \end{figure}
    \FloatBarrier

    \begin{figure}[!htbp]
    \centering
    \includegraphics[width=0.4\textwidth]{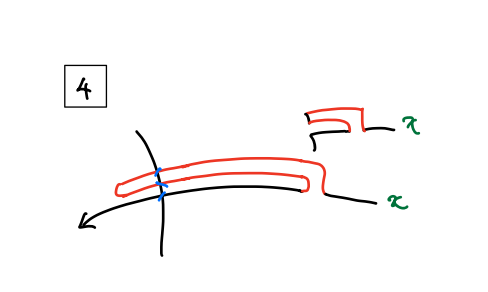}
    \caption{Case $4$ of Figure \ref{beginlemma0} after the tentacle traverses along the crossing.}
    \label{beginlemma4}
    \end{figure}
    \FloatBarrier

    \begin{figure}[!htbp]
    \centering
    \includegraphics[width=0.65\textwidth]{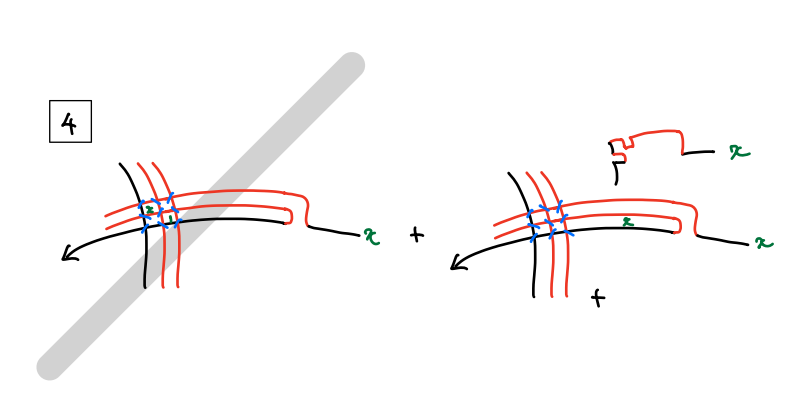}
    \caption{Case $4$ of Figure \ref{beginlemma0} after the tentacle traverses over the crossing.}
    \label{beginlemma4'}
    \end{figure}
    \FloatBarrier

\subsection{Proof of Lemma \ref{endlemma}}

    Figure \ref{endlemma0} describes the relevant local picture. There are two cases to consider, depending on how the last crossing of $K$ is resolved. Figure \ref{endlemma1} verify the claim in each of these two cases. \qed

    \begin{figure}[!htbp]
    \centering
    \includegraphics[width=0.5\textwidth]{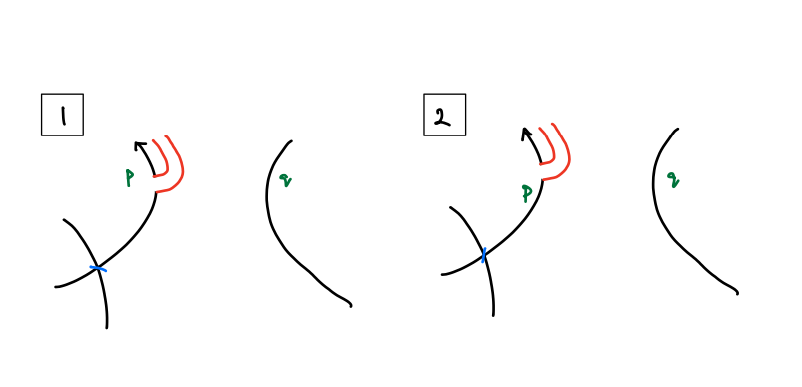}
    \caption{local picture of $K$ and $\bar K$ around the attaching region for $H$. The components of $K$, $\bar K$ containing the connected components of the attaching region are labeled $p$, $q$ respectively. There are two ways the final crossing is resolved in $a$, contributing to two cases.}
    \label{endlemma0}
    \end{figure}
    \FloatBarrier

    \begin{figure}[!htbp]
    \centering
    \includegraphics[width=0.5\textwidth]{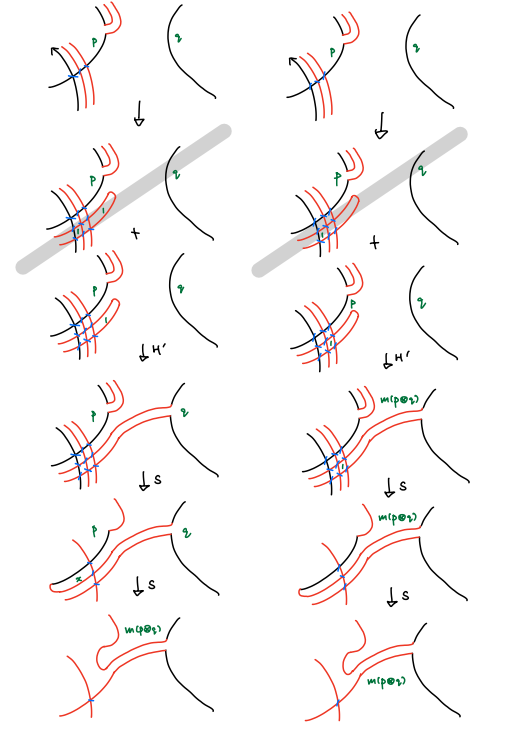}
    \caption{The two cases of Figure \ref{endlemma0}.}
    \label{endlemma1}
    \end{figure}
    \FloatBarrier

\subsection{Figures for proof of the Theorem \ref{mainresult}}

    \begin{figure}[!htbp]
    \centering
    \includegraphics[width=0.65\textwidth]{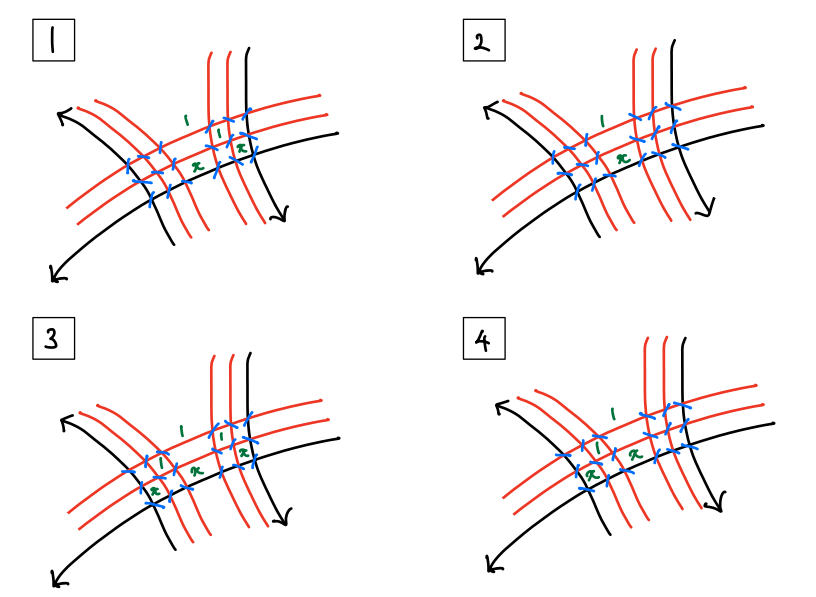}
    \caption{The four possible cases for local pictures of $b'$, when $Q$ is labeled $1$. Omitted are twelve other cases, corresponding to changes in the orientations of the vertical strands of $K$.}
    \label{localSlemma0-1}
    \end{figure}
    \FloatBarrier

    \begin{figure}[!htbp]
    \centering
    \includegraphics[width=0.65\textwidth]{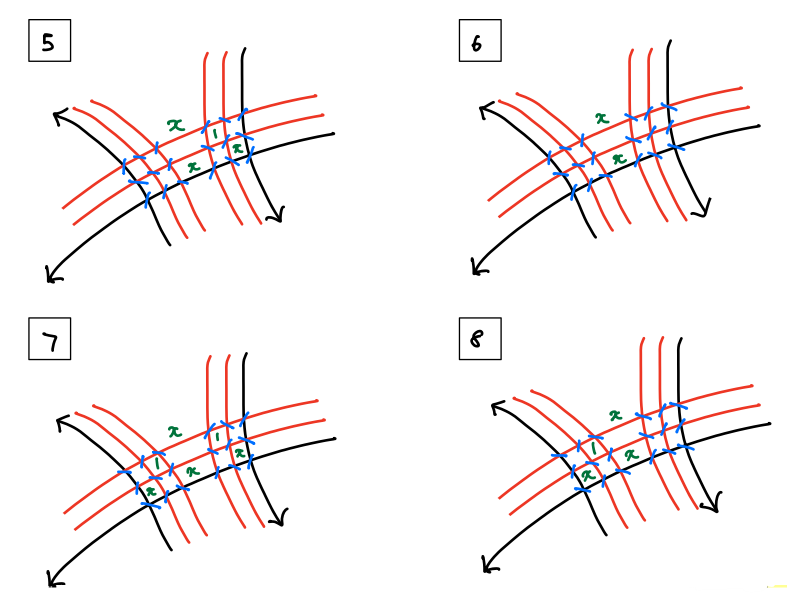}
    \caption{The four possible cases for local pictures of $b'$, when $Q$ is labeled $x$. Omitted are twelve other cases, corresponding to changes in the orientations of the vertical strands of $K$.}
    \label{localSlemma0-2}
    \end{figure}
    \FloatBarrier

    \begin{figure}[!htbp]
    \centering
    \includegraphics[width=0.65\textwidth]{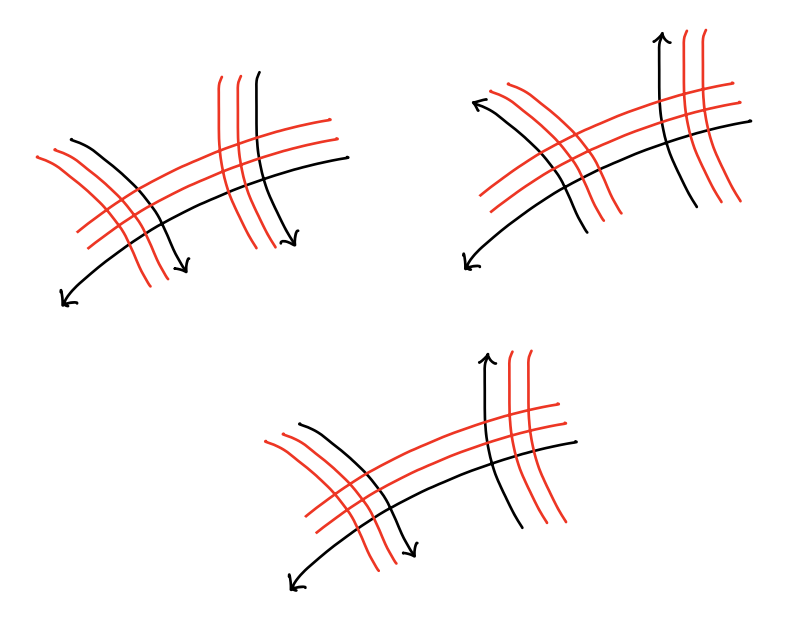}
    \caption{There are three other choices of orientations for the vertical strands of $K$ in the local picture. Each choice contributes eight cases, depending on how the crossings $c_1$ and $c_2$ are resolved.}
    \label{localSlemma0'}
    \end{figure}
    \FloatBarrier

    \begin{figure}[!htbp]
    \centering
    \includegraphics[width=0.65\textwidth]{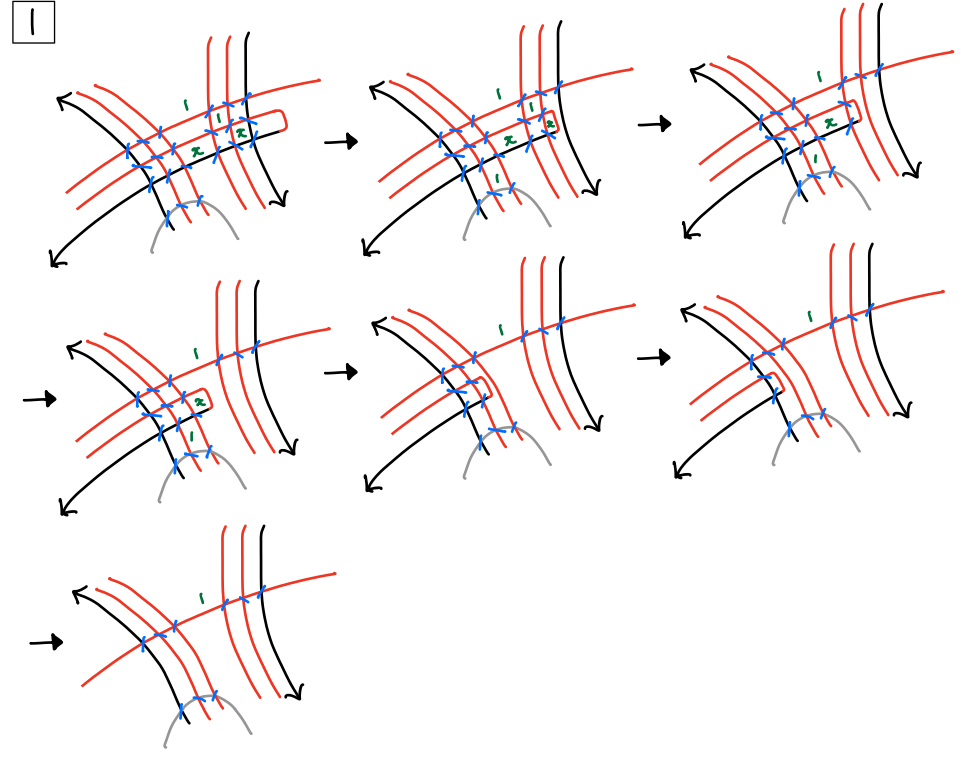}
    
    \label{localSlemma1}
    \end{figure}
    \FloatBarrier

    \begin{figure}[!htbp]
    \centering
    \includegraphics[width=0.65\textwidth]{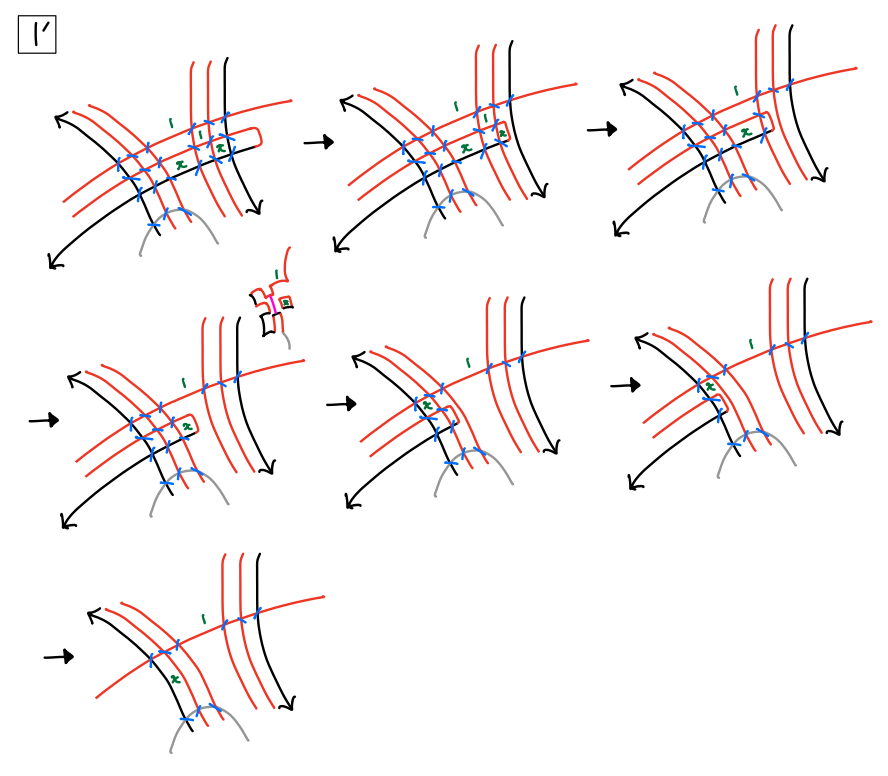}
    \label{localSlemma1'}
    \end{figure}
    \FloatBarrier

    \begin{figure}[!htbp]
    \centering
    \includegraphics[width=0.65\textwidth]{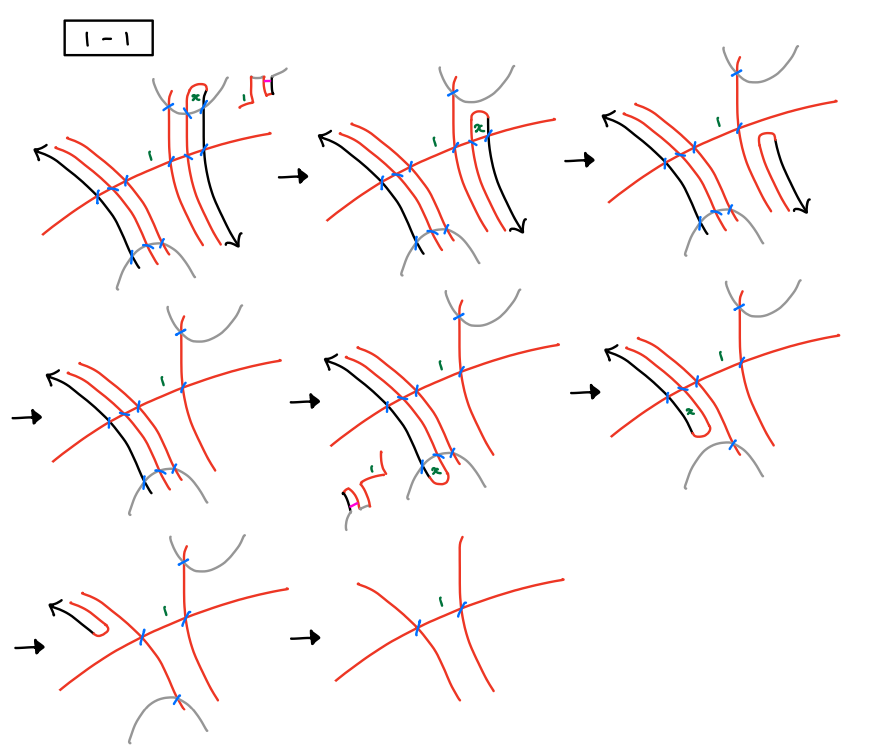}
    \label{localSlemma1-1}
    \end{figure}
    \FloatBarrier

    \begin{figure}[!htbp]
    \centering
    \includegraphics[width=0.65\textwidth]{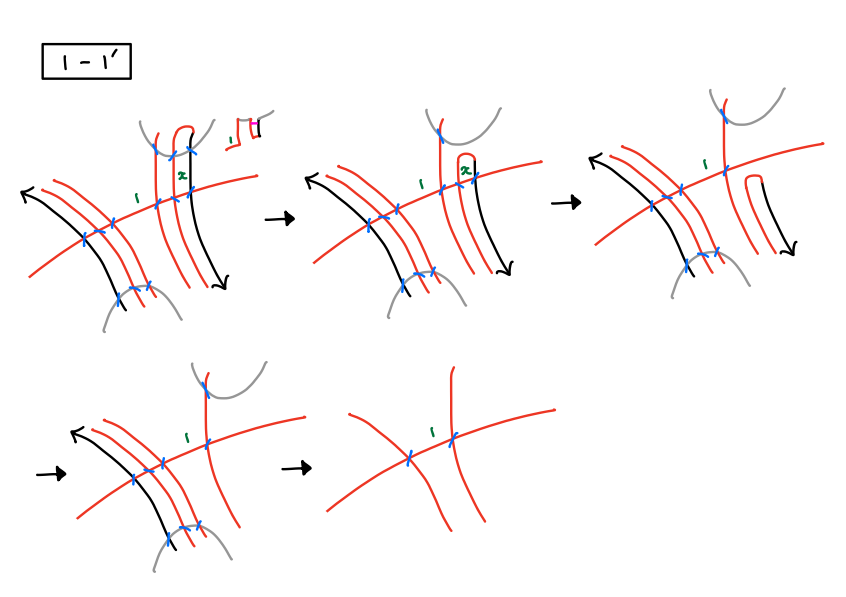}
    \label{localSlemma1-1'}
    \end{figure}
    \FloatBarrier

    \begin{figure}[!htbp]
    \centering
    \includegraphics[width=0.65\textwidth]{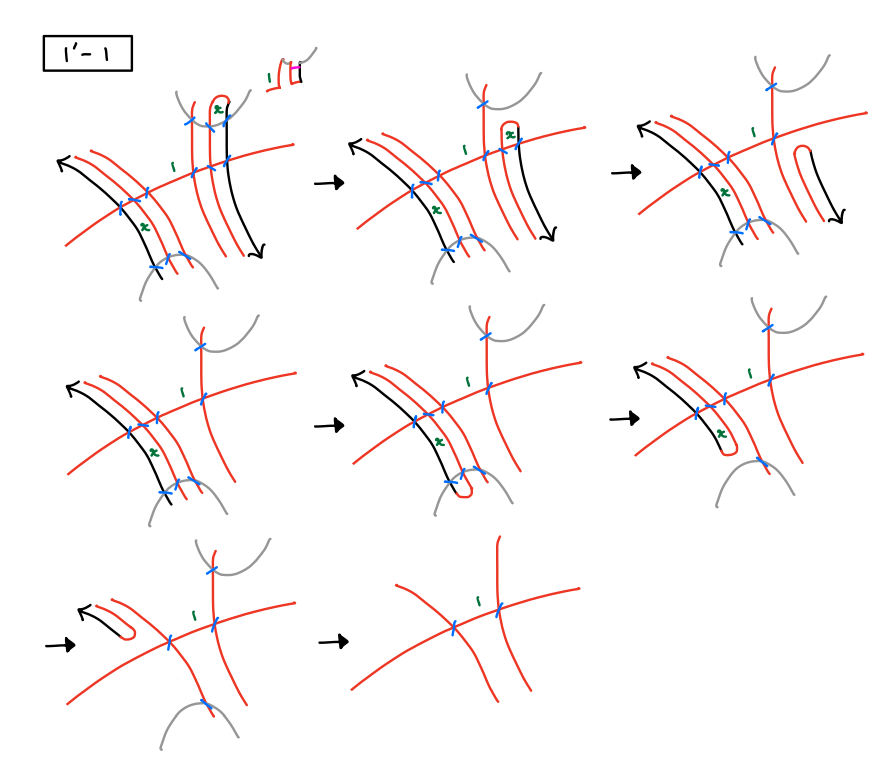}
    \label{localSlemma1'-1}
    \end{figure}
    \FloatBarrier

    \begin{figure}[!htbp]
    \centering
    \includegraphics[width=0.65\textwidth]{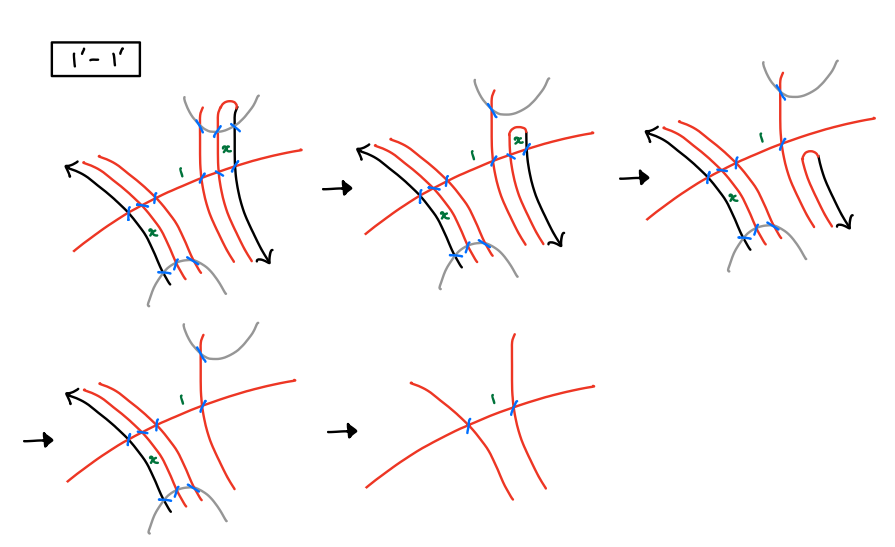}
    \caption{Verifying Case 1 of Figure \ref{localSlemma0-1}. An auxiliary arc (gray), that can be either a strand of $K$ or a strand of the tentacle, is introduced in order to compute the effect of $CKh(S)$ on the local picture. There are two ways the crossings on the auxiliary arc are resolved, leading to intermediate cases 1 and 1'.}
    \label{localSlemma1'-1'}
    \end{figure}
    \FloatBarrier

    \begin{figure}[!htbp]
    \centering
    \includegraphics[width=0.65\textwidth]{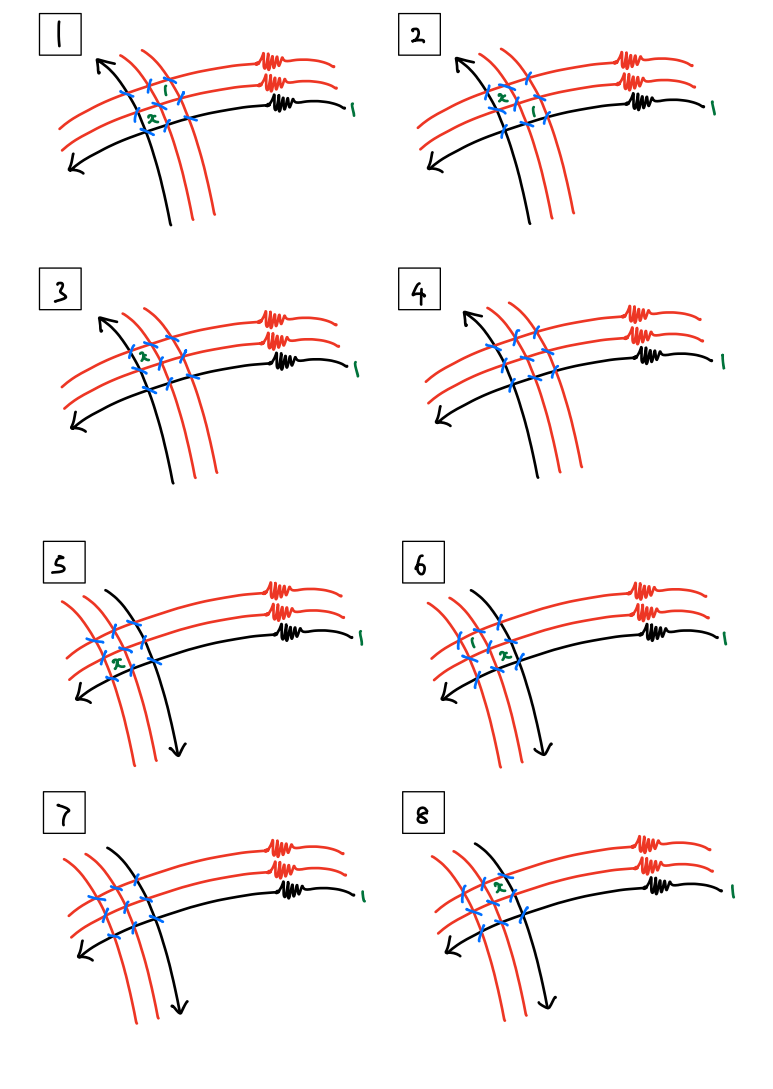}
    \caption{There are eight possible local pictures of a strand of $K$ labeled $1$ encountering a crossing, depending on the orientation of the vertical strand of $K$, and the resolution of the middle crossing.}
    \label{farSlemma0-1}
    \end{figure}
    \FloatBarrier

    \begin{figure}[!htbp]
    \centering
    \includegraphics[width=0.65\textwidth]{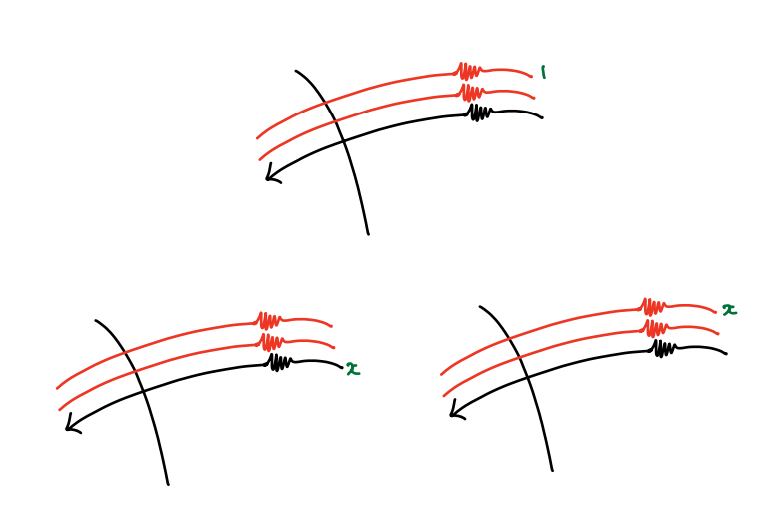}
    \caption{There are three other cases to be considered, each contributing to eight possible local pictures.}
    \label{farSlemma0'}
    \end{figure}
    \FloatBarrier
    
    \begin{figure}[!htbp]
    \centering
    \includegraphics[width=0.65\textwidth]{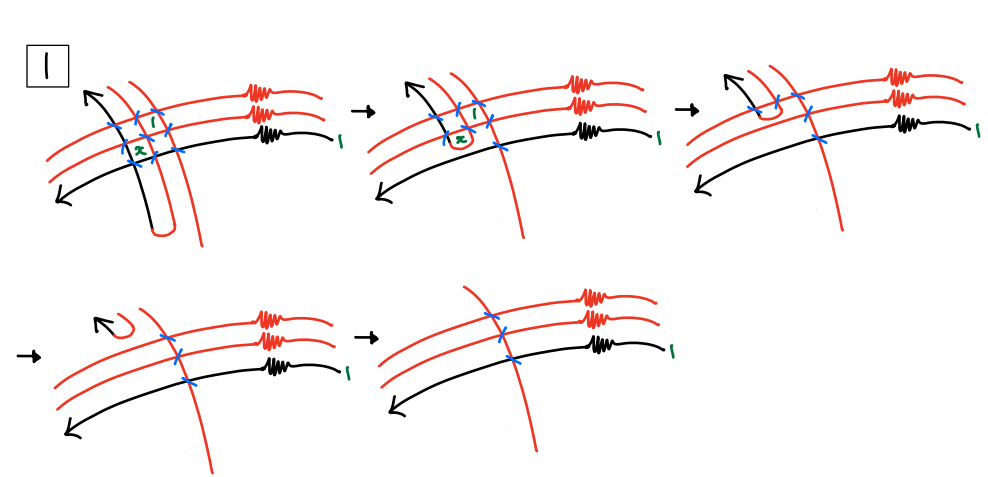}
    \caption{Verifying Case 1 of Figure \ref{farSlemma0-1}. The remaining cases are verified analogously, and are omitted.}
    \label{farSlemma1}
    \end{figure}
    \FloatBarrier

\end{document}